\title{Intersection $K$-theory}
\author{Tudor P\u adurariu}
\address{
Department of Mathematics, Columbia University, 
2990 Broadway, New York, NY 10027}
\email{tgp2109@columbia.edu}
\jot \setlength{\topmargin}{0.1\topmargin}
\newtheorem{thm}{Theorem}[section]
\newtheorem{cor}[thm]{Corollary}
\newtheorem{conj}[thm]{Conjecture}
\newtheorem{prop}[thm]{Proposition}
\theoremstyle{definition}
\newtheorem{thm*}[thm]{Theorem$^*$}
\newcommand{\comment}[1]{}
\renewcommand{\leq}{\leqslant}
\renewcommand{\geq}{\geqslant}
\newcommand{\KK}{\text{gr}^\cdot K_\cdot(X)}
\newcommand{\OO}{\mathcal{O}}
\renewcommand{\P}{\textbf{P}}
\newcommand{\E}{\mathcal{E}}
\newcommand{\I}{\textbf{I}}
\newcommand{\C}{\mathbb{C}}
\begin{document}
\maketitle

\begin{abstract}
For a proper map $f:X\to S$ between varieties over $\mathbb{C}$ with $X$ smooth, we introduce increasing filtrations $\P^{\leq \cdot}_f\subset P^{\leq \cdot}_f$ on $\text{gr}^\cdot K_\cdot(X)$, the associated graded on $K$-theory with respect to the codimension filtration, both sent by the cycle map to the perverse filtration on cohomology ${}^pH^{\leq \cdot}_f(X)$. The filtrations $P^{\leq \cdot}_f$ and $\P^{\leq \cdot}_f$ are functorial with respect to proper pushforward and $P^{\leq \cdot}_f$ is functorial with respect to pullback.

We use the above filtrations to propose two definitions of (graded) intersection $K$-theory $\text{gr}^\cdot IK_\cdot(S)$ and $\text{gr}^\cdot \textbf{I}K_\cdot(S)$. Both have cycle maps to intersection cohomology $IH^{\cdot}(S)$. 
We conjecture a version of the decomposition theorem for semismall surjective maps and prove it in some particular cases.
\end{abstract}


\section{Introduction}


For a complex variety $X$, intersection cohomology $IH^\cdot(X)$ coincides with singular cohomology with rational coefficients $H^\cdot(X)$ when $X$ is smooth and has better properties than $H^\cdot(X)$ when $X$ is singular, for example it satisfies Poincar\'e duality and the Hard Lefschetz theorem.  
Many applications of intersection cohomology, for example in representation theory \cite{L}, \cite[Section 4]{dCM2}, are through the decomposition theorem of Beilinson--Bernstein--Deligne--Gabber \cite{BBD}. 

A construction of intersection $K$-theory is expected to have applications in computations of $K$-theory via a $K$-theoretic version of the decomposition theory, and in representation theory, for example in the construction of representations of vertex algebras using (framed) Uhlenbeck spaces \cite{BFN}. 
The Goresky--MacPherson construction of intersection cohomology \cite{GMcP2} does not generalize in an obvious way to $K$-theory.

\subsection{The perverse filtration and intersection cohomology}
\label{coh}

For $S$ a variety over $\mathbb{C}$, intersection cohomology $IH^\cdot(S)$ is a subquotient of $H^\cdot(X)$ for any resolution of singularities $f:X\to S$.
The decomposition theorem implies that $IH^{\cdot}(S)$ is a (non-canonical) direct summand of $H^{\cdot}(X)$. Consider the perverse filtration
\[{}^pH^{\leq i}_f(X):=H^{\cdot}\left(S, {}^p\tau^{\leq i}Rf_*IC_X\right)\hookrightarrow H^{\cdot}(S, Rf_*IC_X)=H^{\cdot}(X).\]
For $V\hookrightarrow S$, denote by $X_V:=f^{-1}(V)$. Let $A_V$ be the set of irreducible components of $X_V$ and let $c^a_V$ be the codimension on $\iota^a_V: X_V^a\hookrightarrow X$ for $a\in A_V$. Consider a resolution of singularities $\pi^a_V: Y_V^a\to X_V^a$.
Let $g^a_V:=f\pi^a_V:Y_V^a\to V$. Define 
\begin{align*}
    {}^p\widetilde{H}_{f,V}^{\leq i}:=&\bigoplus_{a\in A_V}\iota^a_{V*}\pi_{V*}^a\, {}^pH^{\leq i-c^a_V}_{g^a_V}(Y_V^a)\subset {}^pH_f^{\leq i}(X),\\
     {}^p\widetilde{H}_{f}^{\leq i}:=&\bigoplus_{V\subsetneqq S}{}^p\widetilde{H}_{f,V}^{\leq i}\subset {}^pH_f^{\leq i}(X).
\end{align*}
The decomposition theorem implies that
\[IH^{\cdot}(S)\cong{}^pH_f^{\leq 0}H^\cdot(X)\big/{}^p\widetilde{H}_f^{\leq 0}H^\cdot(X).\]

\subsection{Perverse filtrations in $K$-theory}

Inspired by the above characterization of intersection cohomology via the perverse filtration, we propose two $K$-theoretic perverse filtrations $\P^{\leq i}_f\subset P^{\leq i}_f$ on $\text{gr}^\cdot K_\cdot(X)$ for a proper map $f:X\to S$ of complex varieties with $X$ smooth. Here, the associated graded $\text{gr}^\cdot K_\cdot(X)$ is with respect to the codimension of support filtration on $K_\cdot(X)$ \cite[Definition 3.7, Section 5.4]{G}.

The precise definition of the filtration $P^{\leq i}_f \text{gr}^\cdot K_\cdot(X)$ is given in Subsection \ref{filtPP}; roughly, it is generated by (subspaces of) images
\begin{equation}\label{correspo}
\Phi_\Gamma: \text{gr}^\cdot K_\cdot(T)\to \text{gr}^\cdot K_\cdot(X)
\end{equation}
induced by correspondences $\Gamma$ on $X\times T$ of restricted dimension, see \eqref{ranges}, for $T$ a smooth variety with a generically finite map onto a subvariety of $S$. 
These subspaces of the images of $\Phi_\Gamma$ are required to satisfy certain conditions when restricted to the subvarieties $Y^a_V$ from Subsection \ref{coh}. 

The definition of the filtration $\P^{\leq i}_f \text{gr}^\cdot K_\cdot(X)\subset P^{\leq i}_f \text{gr}^\cdot K_\cdot(X)$ is given in Subsection \ref{Kper2}. In \eqref{correspo}, we further impose that $\Gamma$ is a quasi-smooth scheme surjective over $T$. This futher restricts the possible dimension of the cycles $\Gamma$, see Proposition \ref{lb}, and allows for more computations.


\begin{thm}\label{cycle}
Let $f:X\to S$ be a proper map with $X$ smooth. Then the cycle map $\mathfrak{c}:\text{gr}^\cdot K_0(X)_\mathbb{Q}\to H^\cdot(X)$ respects the perverse filtration
\[\P^{\leq i}_f\text{gr}^\cdot K_0(X)_\mathbb{Q}\subset P^{\leq i}_f\text{gr}^\cdot K_0(X)_\mathbb{Q}\xrightarrow{\mathfrak{c}} {}^pH^{\leq i}_f(X).\]
\end{thm}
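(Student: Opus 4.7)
The plan is to verify the claim on generators of $P^{\leq i}_f$, translate the statement into cohomology via the functoriality of the cycle map, and then reduce to a perverse support estimate coming from the decomposition theorem.

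By the definition recalled in Subsection \ref{filtPP}, $P^{\leq i}_f\,\text{gr}^\cdot K_0(X)_\mathbb{Q}$ is generated by classes of the form $\Phi_\Gamma(\alpha)$, where $T$ is smooth with a generically finite map onto a subvariety $W\subset S$, $\Gamma\subset X\times T$ is a correspondence whose dimension is constrained by \eqref{ranges}, and $\alpha\in\text{gr}^\cdot K_0(T)_\mathbb{Q}$, subject to extra compatibility conditions on the $Y^a_V$. Since the $Y^a_V$ conditions only shrink the generating set, it suffices to show that $\mathfrak{c}(\Phi_\Gamma(\alpha))$ lies in ${}^pH^{\leq i}_f(X)$ for every generator without those extra conditions; the statement for $\P^{\leq i}_f$ then follows from the inclusion $\P^{\leq i}_f\subset P^{\leq i}_f$.

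Next I would invoke the compatibility of the cycle map with proper pushforward and l.c.i.\ pullback on $K_0$ to obtain the identity $\mathfrak{c}\circ\Phi_\Gamma = \Phi_\Gamma^{H}\circ\mathfrak{c}$, where $\Phi_\Gamma^H: H^\cdot(T)\to H^\cdot(X)$ is the cohomological correspondence induced by $[\Gamma]\in H^\cdot(X\times T)$. This reduces the theorem to showing $\Phi_\Gamma^H(H^\cdot(T))\subset {}^pH^{\leq i}_f(X)$ whenever $\Gamma$ lies in the dimension range \eqref{ranges}. For the latter, note that $\Phi_\Gamma^H(\beta)=(p_X)_*(p_T^*\beta\cup[\Gamma])$ is cohomologically supported on $p_X(\Gamma)\subset X$, whose image under $f$ lies in $W\subsetneq S$. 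Writing the decomposition theorem as $Rf_*IC_X\cong\bigoplus_j IC_{W_j}(L_j)[a_j]$, classes supported over $W$ must factor through the summands whose support is contained in $\overline{W}$, and the codimension of $W$ in $S$, together with the dimension constraint \eqref{ranges} on $\Gamma$, should bound the shift $a_j$ by $i$.

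The main obstacle is this final step: precisely matching the numerical bounds in \eqref{ranges} with the perverse degree. This requires careful tracking of how the decomposition theorem interacts with pushforward along the correspondence, and it is here that the particular numerical choices in the definition of $P^{\leq i}_f$ are vindicated. The argument is, in spirit, a $K$-theoretic avatar of De Cataldo--Migliorini's geometric description of the perverse filtration, re-expressed through correspondences rather than hyperplane sections; carrying out this translation faithfully is the technical heart of the proof.
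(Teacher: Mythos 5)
Your proposed reduction is where the argument breaks. You claim that, because the $\tau^{a*}_V$-kernel conditions defining $P^{\leq i}_f$ merely ``shrink the generating set,'' it suffices to show $\mathfrak{c}\circ\Phi_\Gamma$ lands in ${}^pH^{\leq i}_f(X)$ for all $\Gamma$ satisfying only the numerical bound \eqref{ranges}, i.e.\ to prove the theorem for $P'^{\leq i}_f$. That stronger statement is false, and this is exactly why $P^{\leq i}_f$ is defined as an \emph{intersection of kernels} inside $P'^{\leq i}_f$ rather than as the span of a restricted set of generators. The numerical constraint \eqref{ranges} only controls the perverse degree of the contribution of $\Phi_\Gamma$ to the \emph{full-support} summands of $Rf_*IC_X$: when the source $T$ has image $W\subsetneqq S$, the induced map $R\pi_*IC_T[\dim X-\dim T-2s]\to Rf_*IC_X$ can hit summands $IC_{W_j}(L_j)[a_j]$ with $\overline{W_j}\subset \overline{W}$ and $a_j>i$, since nothing in \eqref{ranges} bounds the internal defect of semismallness of $\pi:T\to W$ or the shifts of the small-support summands. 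So in general $\mathfrak{c}(P'^{\leq i}_f)\not\subset {}^pH^{\leq i}_f(X)$; the paper (Proposition \ref{cyclemap}) only obtains a surjection $P'^{>i}_fH^\cdot(X)\twoheadrightarrow {}^pH^{>i}_f(X)_{\text{full}}$, the ``full-support'' truncation.

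What you are missing is the step that actually uses the $\tau^{a*}_V$ conditions. In the paper's proof, one first shows that, after passing to cohomology, $P^{\leq i}_fH^\cdot(X)$ lies in the intersection of kernels of the restrictions $\tau^{a*}_V$ landing in ${}^pH^{>i+c^a_V}_{\widetilde{f^a_V}}(\widetilde{X^a_V})_{\text{full}}$. Then Corti--Hanamura's embedding \eqref{comp2}, ${}^p\mathcal{H}^i(Rf_*IC_X)_V\hookrightarrow\bigoplus_a {}^p\mathcal{H}^{i+c^a_V}(Rg^a_{V*}IC_{Y^a_V})$, converts vanishing of these restricted classes into vanishing of the problematic small-support contributions of high perverse degree, yielding $P^{\leq i}_fH^\cdot(X)\subset {}^pH^{\leq i}_f(X)$. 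Your sketch omits both the role of the kernel conditions and the Corti--Hanamura input; the sentence ``the codimension of $W$, together with the dimension constraint on $\Gamma$, should bound the shift $a_j$ by $i$'' is precisely the false assertion that the kernel conditions were introduced to repair. The upstream ingredients you invoke (compatibility of the cycle map with pushforward/pullback, factoring $\Phi_\Gamma$ through $Rf_*IC_X$, using the decomposition theorem) are fine and do match the paper's strategy, but they do not by themselves close the argument.
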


Perverse filtrations in $K$-theory have the following functorial properties. Let $X$ and $Y$ be smooth varieties with $c=\dim X-\dim Y$. Consider proper maps
\begin{equation*}
    \begin{tikzcd}[column sep=small]
    Y\arrow[dr,"g"']\arrow[rr,"h"] && X\arrow[dl,"f"]\\
    & S. &
    \end{tikzcd}
\end{equation*}
There are induced maps
\begin{align*}
h_* &:P_g^{\leq i-c}\text{gr}_\cdot K_\cdot(Y)\to P_f^{\leq i}\text{gr}_\cdot K_\cdot(X),\\
h_* &:\P_g^{\leq i-c}\text{gr}_\cdot K_\cdot(Y)\to \P_f^{\leq i}\text{gr}_\cdot K_\cdot(X),\\
h^* &:P_f^{\leq i-c}\text{gr}^\cdot K_\cdot(X)\to P_g^{\leq i}\text{gr}^\cdot K_\cdot(Y).
\end{align*}
If $h$ is surjective, then there is also a map
\[h^*:\P_f^{\leq i-c}\text{gr}^\cdot K_\cdot(X)\to \P_g^{\leq i}\text{gr}^\cdot K_\cdot(Y).\]

Let $f:X\to S$ be a resolution of singularities. We define $\widetilde{P}^{\leq 0}_f\text{gr}^\cdot K_\cdot(X)$ and $\widetilde{\P}^{\leq 0}_f\text{gr}^\cdot K_\cdot(X)$ similarly to ${}^p\widetilde{H}^{\leq i}(X)$. 
Inspired by the discussion in cohomology from Subsection \ref{coh}, define
\begin{align*}
    \text{gr}^\cdot IK_{\cdot}(S)&:=P_f^{\leq 0}\text{gr}^\cdot K_\cdot(X)\big/\left(\widetilde{P}_f^{\leq 0}\text{gr}^\cdot K_\cdot(X)\cap \text{ker}\,f_*\right)
\\
\text{gr}^\cdot \textbf{I}K_{\cdot}(S)&:=\P_f^{\leq 0}\text{gr}^\cdot K_\cdot(X)\big/\left(\widetilde{\P}_f^{\leq 0}\text{gr}^\cdot K_\cdot(X)\cap \text{ker}\,f_*\right).\end{align*}
Note that we do not construct $\text{gr}^\cdot IK_{\cdot}(S)$ and 
$\text{gr}^\cdot \textbf{I}K_{\cdot}(S)$ as associated graded of spaces $IK_{\cdot}(S)$ or $\textbf{I}K_{\cdot}(S)$, but we hope that such a construction is possible, see Subsection \ref{inkt}.

\begin{thm}\label{well}
The definitions of $\text{gr}^\cdot IK_{\cdot}(S)$ and $\text{gr}^\cdot \textbf{I}K_{\cdot}(S)$ do not depend on the resolution of singularities $f:X\to S$ with the properties mentioned above. 
Further, there are cycle maps
\begin{align*}
\mathfrak{c}&: \text{gr}^jIK_0(S)_\mathbb{Q}\to IH^{2j}(S)\\
\mathfrak{c}&: \text{gr}^j\textbf{I}K_0(S)_\mathbb{Q}\to IH^{2j}(S).
\end{align*}
\end{thm}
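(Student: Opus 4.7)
The cycle maps come from Theorem \ref{cycle} together with the pushforward-functoriality of $\mathfrak{c}$. By definition, $\widetilde{P}^{\leq 0}_f$ is the sum of the images of $P^{\leq -c^a_V}_{g^a_V}\text{gr}^\cdot K_\cdot(Y^a_V)$ under $(\iota^a_V\pi^a_V)_*$. Applying Theorem \ref{cycle} to each $g^a_V$ and using that cycle maps intertwine proper pushforwards, we obtain $\mathfrak{c}(\widetilde{P}^{\leq 0}_f)\subseteq {}^p\widetilde{H}^{\leq 0}_f$. The cycle map is then the composition
\[P^{\leq 0}_f\text{gr}^\cdot K_0(X)_\mathbb{Q}\xrightarrow{\mathfrak{c}}{}^pH^{\leq 0}_f(X)\twoheadrightarrow {}^pH^{\leq 0}_f(X)/{}^p\widetilde{H}^{\leq 0}_f(X)\cong IH^\cdot(S),\]
which kills $\widetilde{P}^{\leq 0}_f\cap\ker f_*$ a fortiori since it kills all of $\widetilde{P}^{\leq 0}_f$. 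The same argument with $\P^{\leq\cdot}$ gives the map from $\text{gr}^\cdot\I K_0(S)_\mathbb{Q}$.

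For independence, given two resolutions $f_1:X_1\to S$ and $f_2:X_2\to S$, I would take a smooth common dominating resolution $g:Z\to S$ equipped with proper birational maps $p_i:Z\to X_i$ (for example by resolving the closure of the graph inside $X_1\times_S X_2$), and show that each $p_{i*}$ induces an isomorphism on the quotients defining $\text{gr}^\cdot IK_\cdot(S)$ and $\text{gr}^\cdot\I K_\cdot(S)$. Surjectivity follows from $Rp_{i*}\mathcal{O}_Z=\mathcal{O}_{X_i}$: the projection formula gives $p_{i*}p_i^*=\mathrm{id}$, and by the pullback functoriality of $P^{\leq\cdot}$ (and of $\P^{\leq\cdot}$, which applies since $p_i$ is surjective), $p_i^*$ provides a section on $P^{\leq 0}$.

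For injectivity, take $\alpha\in P^{\leq 0}_g$ with $p_{i*}\alpha\in\widetilde{P}^{\leq 0}_{f_i}\cap\ker f_{i*}$ and decompose $\alpha=p_i^*p_{i*}\alpha+(\alpha-p_i^*p_{i*}\alpha)$. Pulling back a correspondence used in the definition of $\widetilde{P}^{\leq 0}_{f_i}$ along $p_i$ (followed by a resolution of the base change) yields a correspondence of the same type for $g$, so the first summand lies in $\widetilde{P}^{\leq 0}_g$. Since $p_i$ and $g$ are both isomorphisms over $S^{\mathrm{sm}}$, the second summand is supported in the exceptional locus of $p_i$, which is contained in $Z_{\mathrm{Sing}(S)}$; after dévissage along the codimension filtration, such a class can be written as a sum of pushforwards from resolutions of components of $Z_V$ for $V\subseteq\mathrm{Sing}(S)\subsetneq S$, hence also lies in $\widetilde{P}^{\leq 0}_g$. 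Combined with $g_*\alpha=f_{i*}p_{i*}\alpha=0$, this gives $\alpha\in\widetilde{P}^{\leq 0}_g\cap\ker g_*$. The argument for $\I K$ is parallel, using $\P^{\leq\cdot}$ and $\widetilde{\P}^{\leq\cdot}$ throughout.

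The main obstacle is this last step: establishing $\ker p_{i*}\cap P^{\leq 0}_g\subseteq\widetilde{P}^{\leq 0}_g$, and similarly for $\P$. In cohomology the analogue is immediate from the decomposition theorem, but here one must produce the representing correspondences directly from the codimension filtration, respecting the dimension constraints on $\Gamma$ (and the extra quasi-smoothness requirement in the bold case). Equally delicate is verifying stability of $\widetilde{P}^{\leq 0}$ under $p_i^*$ at the level of correspondences, since pullback in $K$-theory along non-flat maps does not preserve correspondence-type presentations on the nose.
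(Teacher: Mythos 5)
The cycle-map part of your proposal matches the paper: Proposition \ref{cyclemap} shows $\P^{\leq i}_f H^\cdot(X) \subset P^{\leq i}_f H^\cdot(X) \subset {}^pH^{\leq i}_f(X)$, and Proposition \ref{cyclemap2} obtains the map to $IH$ by showing the corresponding inclusion for $\widetilde{P}^{\leq 0}$ into ${}^p\widetilde{H}^{\leq 0}_f$ and passing to the quotient via \eqref{comp1}. Your sketch of this is fine.

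For independence, however, you have correctly identified your own gap, and it is a genuine one. Your plan takes a common resolution $Z$ dominating $X_1$ and $X_2$ and attempts to prove that $p_{i*}$ is an isomorphism on the quotients by showing, in the injectivity step, that $\ker p_{i*} \cap P^{\leq 0}_g \subset \widetilde{P}^{\leq 0}_g$. This containment, and the associated stability of $\widetilde{P}^{\leq 0}$ under $p_i^*$, are precisely what is hard to produce directly from correspondences: you would need to construct, for an arbitrary class supported on the exceptional locus of $p_i$, a correspondence satisfying the dimension bound \eqref{ranges} (and the quasi-smoothness requirement in the $\P$ case), and nothing in your dévissage sketch actually produces it. As you observe, the cohomological analogue falls out of the decomposition theorem, but there is no such shortcut here.

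The paper avoids this entirely. Instead of a general common resolution, it invokes the weak factorization theorem of Abramovich--Karu--Matsuki--W\l odarczyk \cite{akmw} to reduce to the case where $\pi:Y\to X$ is a single blow-up along a smooth center $Z$. Then the Bondal--Orlov semiorthogonal decomposition \cite[Theorem 4.2]{BO} gives an explicit splitting
\[\text{gr}^\cdot K_{T\times_S Y'}(T\times Y) = \pi^*\text{gr}^\cdot K_{T\times_S X'}(T\times X)\oplus \text{gr}^\cdot K_{T\times_S E'}(T\times Y)^0,\]
which Proposition \ref{blowup} upgrades to compatible decompositions
\[P^{\leq i}_g\text{gr}^\cdot K_\cdot(Y)=\pi^*P^{\leq i}_f\text{gr}^\cdot K_\cdot(X)\oplus P^{\leq i}_g\text{gr}^\cdot K_E(Y)^0\]
and the same for $\widetilde{P}^{\leq i}$ and for $\P$, $\widetilde{\P}$. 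Taking quotients, the exceptional summand cancels on the nose, and the isomorphism of the two definitions of $\text{gr}^\cdot IK_\cdot(S)$ is immediate. The key lemma you are missing is thus replaced by the very concrete blow-up decomposition, which is tractable precisely because the center is smooth and the exceptional divisor is a projective bundle; your general $Z$ has no such structure. If you want to complete your approach you would essentially have to reprove a form of Proposition \ref{blowup} for arbitrary proper birational morphisms, which is much harder than restricting to smooth blow-ups via weak factorization.
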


We also propose definitions for $\text{gr}^\cdot IK_{\cdot}(S,L)$ and $\text{gr}^\cdot \textbf{I}K_{\cdot}(S,L)$ for $L$ a local system on $U$ open in $S$ of the form $L\cong h_*\left(\mathbb{Z}_V\right)$ for an étale map $h:V\to U$. There are cycle maps
\begin{align*}
\mathfrak{c}&: \text{gr}^jIK_0(S, L)_\mathbb{Q}\to IH^{2j}(S, L\otimes\mathbb{Q})\\
\mathfrak{c}&: \text{gr}^j\textbf{I}K_0(S, L)_\mathbb{Q}\to IH^{2j}(S, L\otimes\mathbb{Q}).
\end{align*}

\subsection{Properties of the perverse filtrations and intersection $K$-theory}

The perverse filtrations in $K$-theory and intersection $K$-theory have similar properties to their counterparts in cohomology.

For a map $f:X\to S$, let
$s:=\dim X\times_SX-\dim X$ be its defect of semismallness. In Theorem \ref{sd}, we show that \begin{align*}
\P^{\leq -s-1}_f\text{gr}^\cdot K_0(X)&=0,\\
\P^{\leq s}_f\text{gr}^\cdot K_0(X)=P^{\leq s}_f\text{gr}^\cdot K_0(X)&=\text{gr}^\cdot K_0(X).
\end{align*}
This implies that
\begin{align*}
    & \text{gr}^\cdot IK_\cdot(S)=
    \text{gr}^\cdot \textbf{I}K_\cdot(S)=
    \text{gr}^\cdot K_\cdot(S) \text{ for } S \text{ smooth,}\\
    & \text{gr}^\cdot \textbf{I}K_0(S)=\text{gr}^\cdot K_0(X) \text{ if } S \text{ has a small resolution }f:X\to S.
\end{align*}
For more computations of perverse filtrations in $K$-theory and intersection $K$-theory, see Subsections \ref{examplesfilt} and \ref{computations}.

Let $d=\dim S$. In cohomology, there are natural maps
\begin{align*}
&H^i(S)\to IH^i(S)\to H^{\text{BM}}_{2d-i}(S)\\
&IH^i(S)\otimes IH^j(S)\to H^{\text{BM}}_{2d-i-j}(S).
    \end{align*}
    The composition in the first line is the natural map $H^i(S)\to H^{\text{BM}}_{2d-i}(S)$. The second map is non-degenerate for cycles of complementary dimensions.
In Subsection \ref{multK} we explain that there exist natural maps
\begin{align*}
\text{gr}_i IK_\cdot(S)&\to \text{gr}_i G(S)\\
    \text{gr}^i IK_\cdot(S)\times \text{gr}^j IK_\cdot(S)&\to \text{gr}_{d-i-j} G_\cdot(S)
    \end{align*}
    and their analogues for $\I K$.
The above filtration on $G$-theory is by dimension of supports, see \cite[Section 5.4]{G}. 

\subsection{The decomposition theorem for semismall maps}\label{ssn}
As mentioned above, many applications of intersection cohomology are based on the decomposition theorem. When the map 
\[f:X\to S\] is semismall, the statement of the decomposition theorem is more explicit, which we now explain. Let $\{S_a|\,a\in I\}$ be a stratification of $S$ such that $f_a: f^{-1}(S_a^o)\to S_a^o$ is a locally trivial fibration, where $S_a^o=S_a-\bigcup_{b\in I} \left(S_a\cap S_b\right).$
Let $A\subset I$ be the set of relevant strata, that is, those strata such that for $x_a\in S_a^o$: 
\[\dim f^{-1}(x_a)=\frac{1}{2}\left(\dim S-\dim S_a\right).\]
For $x_a\in S_a^o$, the monodromy group $\pi_1(S_a^0, x_a)$ acts on the set of irreducible components of $f^{-1}(x_a)$ of top dimension; let $L_a$ be the corresponding local system.
Let $c_a$ be the codimension of $X_a=f^{-1}(S_a)$ in $X$. The decomposition theorem for the map $f:X\to S$ says that there exists a canonical decomposition \cite[Theorem 4.2.7]{dCM2}:
\[H^j(X)\cong\bigoplus_{a\in A} IH^{j-c_a}(S_a, L_a).\]

We conjecture the analogous statement in $K$-theory.

\begin{conj}\label{dss}
Let $f:X\to S$ be a semismall map and consider $\{S_a|\,a\in I\}$ a stratification as above, and let $A\subset I$ be the set of relevant strata. There is a decomposition for any integer $j$:
\[\text{gr}^j K_\cdot(X)_{\mathbb{Q}}\cong\bigoplus_{a\in A} \text{gr}^{j-c_a} \I K_\cdot(S_a, L_a)_{\mathbb{Q}}.\]
\end{conj}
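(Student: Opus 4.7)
The plan is to mimic the de Cataldo--Migliorini proof of the cohomological decomposition theorem for semismall maps, systematically replacing intersection cohomology by $\text{gr}^\cdot \I K$ and transferring the Hodge-theoretic ingredients to $K$-theory via the perverse filtration of the paper. The structural reason this should work is that for a semismall map the defect of semismallness is $s=0$, so by Theorem \ref{sd} one has $\P^{\leq 0}_f\text{gr}^\cdot K_\cdot(X)=\text{gr}^\cdot K_\cdot(X)$ and $\P^{\leq -1}_f=0$; the entire $K$-theory of $X$ therefore sits in perverse degree zero, which is the degree that will be matched by the right-hand side.

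First I would, for each relevant stratum $a\in A$, construct a canonical map
\[\phi_a:\text{gr}^{j-c_a}\I K_\cdot(S_a,L_a)_\mathbb{Q}\lra\text{gr}^j K_\cdot(X)_\mathbb{Q}\]
out of a correspondence. Pick a resolution $\nu_a:\widetilde{S_a}\to \overline{S_a}$ and a quasi-smooth resolution $Y_a$ of a component of $X\times_S\widetilde{S_a}$ whose projection $p_a:Y_a\to X$ is generically finite with monodromy given by $L_a$; write $q_a:Y_a\to \widetilde{S_a}$ and $g_a=\nu_a\circ q_a$. The correspondence $[\OO_{Y_a}]$ gives an operator $p_{a*}\circ q_a^*$ from $\text{gr}^\cdot K_\cdot(\widetilde{S_a})$ to $\text{gr}^\cdot K_\cdot(X)$. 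Functoriality of $\P^{\leq\cdot}$ under surjective pullback and proper pushforward (with shifts summing to $c_a$) sends $\P^{\leq 0}_{g_a}\text{gr}^\cdot K_\cdot(\widetilde{S_a})$ into $\P^{\leq c_a}_f\text{gr}^\cdot K_\cdot(X)$, and the definition of $\I K$ as a quotient by $\widetilde{\P}^{\leq 0}\cap \ker g_{a*}$ is precisely what is needed for $p_{a*}q_a^*$ to descend to the claimed $\phi_a$.

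The heart of the matter is then to show $\bigoplus_a \phi_a$ is an isomorphism. I would try to build a left inverse from the opposite correspondences $\psi_b=q_{b*}\circ p_b^*$ composed with the quotient to $\I K_\cdot(S_b,L_b)$. By the projection formula, $\psi_b\circ \phi_a$ is computed by the class of $Y_a\times_X Y_b$, and semismallness ensures this fiber product has too small a dimension to contribute unless $\overline{S_b}\subset\overline{S_a}$. This gives the matrix $(\psi_b\circ\phi_a)_{a,b}$ a block-triangular shape with respect to the partial order on strata, reducing the problem to invertibility of the diagonal operators $\psi_a\circ\phi_a$.

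The main obstacle is this diagonal invertibility. In cohomology it is the non-degeneracy of the refined intersection form on the top cycles of the generic fiber, which de Cataldo--Migliorini establish using Hodge-theoretic positivity; there is no direct analogue available in $K$-theory. I would approach it by first localizing to the open stratum $S_a^\reg$, where $f$ restricts to a locally trivial fibration with fiber a finite disjoint union of smooth projective varieties permuted by the monodromy defining $L_a$; the operator $\psi_a\phi_a$ then acts on the $L_a$-isotypic summand of the fiberwise top cycle group by multiplication by a self-intersection integer, which is nonzero after $\tensor\mathbb{Q}$. Globalizing this conclusion back to $\overline{S_a}$ requires a restriction-to-open-stratum isomorphism for $\I K$ that should follow from the functoriality properties and induction on the number of strata, but checking compatibility at the boundaries is delicate and is likely where the argument has to be completed case by case, matching the partial results proved later in the paper.
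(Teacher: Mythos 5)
This statement is a \emph{conjecture}, not a theorem, so there is no ``paper's own proof'' to compare against: the paper establishes it only in two special cases, namely for $K_0$ when each stratum $S_a$ admits a \emph{small} generically-\'etale map $\pi_a:T_a\to S_a$ realizing $L_a$ (Theorem \ref{ss}), and for $K_0$ when $f$ resolves a surface (Subsection \ref{surface}). Your proposal is therefore at best an outline of a strategy, and indeed you concede as much at the end.

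Where your outline does overlap with the paper's partial proof (Theorem \ref{ss}), the structure is the same: build correspondences $\Gamma_a$ from the graphs of rational maps $X_a\dashrightarrow T_a$, check via the functoriality statements (Propositions \ref{pullback}, \ref{pushfor}, Theorems \ref{functorialityP}, \ref{functorP}, \ref{sd}) that $\Phi_{\Gamma_a}$ lands in the correct perverse level, and then deduce the decomposition. The essential difference is that the paper does \emph{not} reconstruct the de Cataldo--Migliorini block-triangularity and diagonal-invertibility argument in $K$-theory; instead it imports \cite[Theorem 4.0.4]{dCM} as a black box, which already gives $\bigoplus_a\Phi_{\Gamma_a}:\bigoplus_a\text{gr}^{j-c_a}K_0(T_a)_\QQ\xrightarrow{\sim}\text{gr}^jK_0(X)_\QQ$, and only then needs to identify $\text{gr}^{j-c_a}K_0(T_a)$ with $\text{gr}^{j-c_a}\I K_0(S_a,L_a)$, which is exactly what the ``small'' hypothesis buys via the computation in Subsection \ref{computations}. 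Your proposal tries to prove the invertibility directly, which is precisely where you correctly flag the gap.

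That gap is genuine and is the reason the statement remains a conjecture. The non-degeneracy of the refined intersection form on top-dimensional components of fibers is established in cohomology by Hodge-theoretic positivity (the polarization on a pure Hodge structure), and no such input exists on $\text{gr}^\cdot K_0$ or $\text{gr}^\cdot K_\cdot$; ``multiplication by a nonzero self-intersection number on the $L_a$-isotypic top-cycle group'' is not a statement you can make in $K$-theory without precisely the data that is missing. Your localization-to-$S_a^\reg$ step runs into the same wall when you try to globalize. A second, smaller issue: Theorem \ref{sd} gives $\P^{\leq s}_f\text{gr}^\cdot K_0(X)=\text{gr}^\cdot K_0(X)$ only for $K_0$, not for higher $K$-groups, so the claim that ``the entire $K$-theory of $X$ sits in perverse degree zero'' is not available for $K_{\cdot>0}$; this is consistent with the fact that all the paper's partial results are stated for $K_0$ only, whereas the conjecture you are attacking is for all of $K_\cdot$.
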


See Conjecture \ref{dss2} for a more precise statement. In Theorem \ref{localrat}, 
we check the above conjecture for $K_0$ under the extra condition that for any $a\in A$, there are small maps $\pi_a:T_a\to S_a$ such that $\pi_a^{-1}(S_a^o)\to S_a^o$ is étale with associated local system $L_a$. The proof of the above result is based on a theorem of de Cataldo--Migliorini \cite[Section 4]{dCM}. In Subsection \ref{surface}, we prove the statement for $K_0$ when $f: X\to S$ is a resolution of singularities of a surface.

\subsection{Intersection Chow groups}

When $X$ is smooth, $\text{gr}^i K_0(X)_{\mathbb{Q}}=CH^i(X)_{\mathbb{Q}}$. Thus $\text{gr}^i IK_0(S)_{\mathbb{Q}}$ is a candidate for an intersection Chow group of $S$. Corti--Hanamura already defined (rational) intersection Chow groups (or Chow motives) in \cite{CH1}, \cite{CH2} inspired by the decomposition theorem. One proposed definition assumes conjectures of Grothendieck and Murre and proves a version of the decomposition theorem for Chow groups; the other approach defines a perverse-type filtration on Chow groups by induction on level $i$ of the filtration and via correspondences involving all varieties $W\to S$.
These correspondences need to satisfy certain vanishings for the perverse filtration in cohomology. The advantage of the definition we propose is that one uses fewer correspondences to define $P^{\leq i}_f$ and $\P^{\leq i}_f$ and this allows for computations, see Subsection \ref{computations} and Theorem \ref{ss}.

For varieties $S$ with a semismall resolution of singularities $f:X\to S$, de Cataldo--Migliorini \cite{dCM} proposed a definition of Chow motives $ICH\,(S)$ and proved a version of the decomposition theorem for semismall maps. 

\subsection{Intersection $K$-theory}\label{inkt}
Our approach to define (graded) intersection $K$-theory uses functoriality of the perverse filtration in an essential way. To obtain this functoriality, it is essential to pass to $\text{gr}^\cdot K_\cdot(X)$. 
It is a very interesting problem to find a definition of the perverse filtration on $K_\cdot(X)$.
We hope that such a definition will provide a version of equivariant intersection $K$-theory with applications to geometric representation theory, for example in understanding the $K$-theoretic version of \cite{BFN}.

There are proposed definitions of intersection $K$-theory in some particular cases. Cautis \cite{C}, Cautis--Kamnitzer \cite{CK} have an approach for categorification of intersection sheaves for certain subvarieties of the affine Grassmannian.  Eberhardt defined intersection $K$-theoretic sheaves for varieties with certain stratifications \cite{E}.
In \cite{P1}, we proposed a definition of intersection $K$-theoretic for good moduli spaces of smooth Artin stacks which has applications to the structure theory of Hall algebras of Kontsevich--Soibelman \cite{P2}. 



\subsection{Outline of the paper} In Section \ref{2}, we discuss preliminary material. In Section \ref{3}, we introduce the two perverse filtrations $\P^{\leq \cdot}_f\subset P^{\leq \cdot}_f$. In Section \ref{4}, we define the two versions of intersection $K$-theory $\text{gr}^\cdot IK_\cdot(S)$ and $\text{gr}^\cdot \textbf{I}K_\cdot(S)$. In Section \ref{semismall}, we discuss the decomposition theorem for semismall maps. 

\subsection{Acknowledgements}
I thank Jacob Lurie for useful discussions related to the paper. 
I thank the Institute of Advanced Studies for support during the preparation of the paper. This material is based upon work supported by the National Science Foundation under Grant No. DMS-1926686.

\section{Preliminary material}\label{2}

\subsection{Notations and conventions}
All schemes considered in this paper are finite type and defined over $\mathbb{C}$. 
The definition of the filtration in Subsection \ref{Kper} works over any field, but to define intersection $K$-theory we use resolution of singularities. The definition of intersection $K$-theory works over any field of characteristic zero. A variety is an irreducible reduced scheme. We use quasi-smooth schemes in Subsections \ref{filtPP} and \ref{towa}, which are the natural extension of lci schemes in derived geometry.

For $S$ a scheme, let $D^b\text{Coh}(S)$ be the derived category of bounded complexes of coherent sheaves and let
$\text{Perf}(S)$ be its subcategory of bounded complexes of locally free sheaves on $S$. For their analogues for quasi-smooth schemes, see \cite[Subsections 2.1 and 3.1]{K}. The functors used in the paper are derived; we sometimes drop $R$ or $L$ from notation, for example we write $f_*$ instead of $Rf_*$.
When $S$ is smooth, the two categories coincide. Define
\begin{align*}
    G_\cdot(S)&:=K_\cdot\left(D^b\text{Coh}(S)\right),\\
    K_\cdot(S)&:=K_\cdot\left(\text{Perf}(S)\right).
\end{align*}
For $Y$ a subvariety of $X$, let $D^b\text{Coh}_Y(X)$ be the subcategory of $D^b\text{Coh}(X)$ of complexes supported on $Y$, and 
define \[G_{Y, \cdot}(X):=K_\cdot\left(D^b\text{Coh}_Y(X)\right).\] When $X$ is smooth, we also use the notation $K_{Y, \cdot}(X)$ for the above. We will usually drop the subscript $\cdot$ from the notation.

The local systems used in this paper are of the form $L\cong h_*\left(\mathbb{Z}_V\right)$ (or $L\cong h_*\left(\mathbb{Q}_V\right)$) for an étale morphism $h:V\to U$. We call these local systems \textit{integer (or rational) finite local systems}.

Singular and intersection cohomology and Borel-Moore homology are used only with coefficients in a rational finite local system, usually $\mathbb{Q}$.

For $f$ a morphism of varieties, we denote by $\mathbb{L}_f$ its cotangent complex.

\subsection{Filtrations in $K$-theory}
\label{fil}
A reference for the following is \cite{G}, especially Section 5 in loc. cit. Let $F^{i}G_\cdot(S)$ be the filtration on $G_\cdot(S)$ by sheaves with support of codimension $\geq i$; it induces a filtration on $K_\cdot(S)$. The associated graded will be denoted by $\text{gr}^\cdot G_\cdot(S), \text{gr}^\cdot K_\cdot(S)$. A morphism $f:X\to Y$ of smooth varieties induces maps:
\begin{align*}
   f^*&: F^{ i} K_\cdot(Y)\to F^{ i} K_\cdot(X)\\
   f^*&: \text{gr}^iK_\cdot(Y)\to \text{gr}^iK_\cdot(X).
\end{align*}
Further, let $F^{\text{dim}}_iG_\cdot(S)$ be the filtration on $G_\cdot(S)$ by sheaves with support of dimension $\leq i$; it induces a filtration on $K_\cdot(S)$. The associated graded will be denoted by $\text{gr}_\cdot G_\cdot(S), \text{gr}_\cdot K_\cdot(S)$. A proper morphism $f:X\to Y$ of schemes induces maps: 
\begin{align*}
f_*&: F^{\text{dim}}_iG_\cdot(X)\to F^{\text{dim}}_iG_\cdot(Y)\\
f_*&: \text{gr}_iG_\cdot(X)\to \text{gr}_iG_\cdot(Y).
\end{align*}
There are similar filtrations and associated graded on $G_Y(X)$ for $Y\hookrightarrow X$ a subvariety. If $X$ is smooth of dimension $d$, then $\text{gr}_i G_Y(X)=\text{gr}^{d-i}G_Y(X)$.

\begin{prop}\label{prop1}
Let $S\xrightarrow{a} \text{Spec}\,\mathbb{C}$ be a variety of dimension $d$. Then
\[\left(a^*,\bigoplus_{T\subsetneqq S}\iota_{T*}\right): G_0(\text{Spec}\,\mathbb{C})\oplus\bigoplus_{T\subsetneqq S}\text{gr}_\cdot G_0(T)\twoheadrightarrow \text{gr}_\cdot G_0(S),\]
where the sum is taken over all proper subvarieties $T$ of $S$.
\end{prop}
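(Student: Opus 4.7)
The plan is to argue directly by dévissage. Recall that for a Noetherian scheme, $G_0(S)$ is generated as an abelian group by classes $[\OO_Z]$ for $Z \subseteq S$ an integral closed subscheme, with $[\OO_Z] \in F^{\text{dim}}_{\dim Z} G_0(S)$. Passing to the associated graded, $\text{gr}_i G_0(S)$ is generated by the images of those $[\OO_Z]$ with $\dim Z = i$, since any class supported in strictly smaller dimension dies in $\text{gr}_i$. (The reduction from a general coherent sheaf $\F$ to a combination of $[\OO_Z]$'s uses a filtration of $\F$ whose graded pieces are coherent sheaves on integral closed $Z_j$, together with the generic rank observation $[\G] \equiv (\mathrm{rk}_{K(Z)}\G)\,[\OO_Z] \pmod{F^{\text{dim}}_{\dim Z - 1}G_0(Z)}$ for such $\G$, which follows from the localization sequence applied to any dense open $U \subseteq Z$ over which $\G|_U$ is free.)

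The key observation is that the generators of $\text{gr}_\cdot G_0(S)$ fall into two disjoint cases. For each integral closed subscheme $Z \subsetneqq S$, which must have $\dim Z < d = \dim S$, the class $[\OO_Z] \in \text{gr}_{\dim Z} G_0(Z)$ pushes forward under $\iota_{Z*}$ to the corresponding class in $\text{gr}_{\dim Z} G_0(S)$, so it lies in the image of the $T = Z$ summand. This accounts for every generator in degrees $i < d$. In degree $i = d$, the only integral closed subscheme of dimension $d$ is $S$ itself (since $S$ is irreducible), so the single generator $[\OO_S] \in \text{gr}_d G_0(S)$ must be supplied separately; this is exactly $a^*[\OO_{\Spec \mathbb{C}}]$ (flat pullback along $a$, which is defined since any morphism to $\Spec \mathbb{C}$ is flat), giving the need for the $G_0(\Spec \mathbb{C})$ summand. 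Together these cover every generator of $\text{gr}_\cdot G_0(S)$, proving surjectivity.

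The argument presents no serious obstacle; the only point worth emphasizing is that the $a^*$ summand is required precisely to realize the top-dimensional generator $[\OO_S]$, which cannot appear as a pushforward from any proper subvariety of $S$, while every other generator of $\text{gr}_\cdot G_0(S)$ is captured by the $\iota_{T*}$ maps for some $T \subsetneqq S$.
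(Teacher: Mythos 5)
Your proof is correct and is essentially the same argument the paper uses: you split the generators of $\text{gr}_\cdot G_0(S)$ by dimension, observe that in degrees $i<d$ they arise by pushforward from proper subvarieties (the paper summarizes this as ``surjective by definition of the filtration $F^i_{\text{dim}}$''), and identify $\text{gr}_d G_0(S)$ with the image of $a^*$. You have simply spelled out the dévissage details that the paper takes as known.
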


\begin{proof}
For $i<d$, the map 
\[\bigoplus_{T\subsetneqq S}\iota_{T*}: \bigoplus_{T\subsetneqq S}\text{gr}_i G_0(T)\twoheadrightarrow \text{gr}_i G_0(S)\] is surjective by definition of the filtration $F^i_{\text{dim}}$. Finally, the following map is an isomorphism \[a^*: G_0(\text{Spec}\,\mathbb{C})\xrightarrow{\sim}  \text{gr}_dG_0(S).\] 
\end{proof}

\begin{prop}\label{prop2}
Let $S$ be a singular variety of dimension $d$, and let $f:X\to S$ be a resolution of singularities. The following map is surjective:
\[f_*:\text{gr}_i G_0(X)\twoheadrightarrow \text{gr}_i G_0(S).\]
\end{prop}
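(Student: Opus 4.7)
The plan is to induct on $d = \dim S$, with the base case $d = 0$ trivial. For the inductive step, I invoke Proposition~\ref{prop1}: $\text{gr}_i G_0(S)$ is generated by $[\OO_S]$ (in top degree $i = d$) together with the images $\iota_{T*}\,\text{gr}_i G_0(T)$ as $T$ ranges over the proper subvarieties $T \subsetneqq S$, so it suffices to show that each such generator lies in the image of $f_*$.

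For $[\OO_S]$: since $f$ is proper and birational with $X$ smooth, the cone of $\OO_S \to Rf_*\OO_X$ has all cohomology sheaves supported on $f(\operatorname{Exc}(f))$, a proper closed subvariety of dimension strictly less than $d$. Hence $f_*[\OO_X] \equiv [\OO_S]$ modulo $F^{\text{dim}}_{d-1}G_0(S)$, placing $[\OO_S]$ in the image in $\text{gr}_d G_0(S)$.

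For a generator $\iota_{T*}[\mathcal{G}]$ with $[\mathcal{G}] \in \text{gr}_i G_0(T)$: I would first take a resolution $\pi_T: T' \to T$; the inductive hypothesis (applicable since $\dim T < d$) gives that $\pi_{T*}$ is surjective on $\text{gr}_i$, so $[\mathcal{G}] = \pi_{T*}[\mathcal{H}]$ for some $[\mathcal{H}] \in \text{gr}_i G_0(T')$. By $K$-theoretic devissage it suffices to treat $[\mathcal{H}] = [\OO_Z]$ for $Z \subset T'$ integral of dimension $i$, that is, to lift $[\OO_{Z'}]$ where $Z' := \pi_T(Z) \subset S$. When $Z' \not\subset f(\operatorname{Exc}(f))$, the strict transform of $Z'$ supplies an integral $\widetilde Z \subset X$ of dimension $i$ birational to $Z'$ under $f$, and then $f_*[\OO_{\widetilde Z}] \equiv [\OO_{Z'}]$ in $\text{gr}_i G_0(S)$.

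The main obstacle is the case $Z' \subset f(\operatorname{Exc}(f))$: the fibers of $f$ over $Z'$ are positive dimensional and no naive strict transform produces an integral subvariety of $X$ of the correct dimension mapping birationally to $Z'$. My plan to overcome this is to use Hironaka's theorem and factor $f$ as a finite sequence of blow-ups $\sigma_k: X_k \to X_{k-1}$ along smooth centers $B_k \subset X_{k-1}$, whose exceptional divisors $E_k \to B_k$ are projective bundles. Taking $k_0$ minimal such that $Z'$ lies in the image of $B_{k_0}$ in $S$, I would lift $Z'$ to an integral birational copy $Z_{k_0-1} \subset X_{k_0-1}$ by strict transforms through $\sigma_1, \ldots, \sigma_{k_0-1}$, then use a rational section of the projective bundle $E_{k_0} \to B_{k_0}$ (existing because any vector bundle admits a rational sub-line-bundle) to lift to an integral $Z_{k_0} \subset X_{k_0}$ birational over $Z_{k_0-1}$, and finally iterate strict transforms through $\sigma_{k_0+1}, \ldots, \sigma_r$ to produce the required $\widetilde Z \subset X$, closing the induction.
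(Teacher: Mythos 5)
Your overall strategy parallels the paper's: induct on $d$, handle the top graded piece via birationality, and use Proposition~\ref{prop1} to reduce to lifting classes supported on proper subvarieties. Where the paper is extremely terse — it simply asserts the existence of a resolution $g:Y\to V$ that factors through $X$ and invokes the induction hypothesis for $g$ — you correctly isolate the real difficulty: when $Z'\subset f(\operatorname{Exc}(f))$ there is no strict transform, and one must exhibit a cycle in $X$ of dimension $i$ mapping birationally onto $Z'$ (equivalently, a $k(Z')$-rational point of the generic fiber of $f$ over $Z'$). Identifying this obstacle is a genuine improvement in explicitness over the paper's proof, which leaves it implicit.

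However, the fix you propose does not go through as written. You factor $f$ into blowups $\sigma_k:X_k\to X_{k-1}$ along smooth centers $B_k$ and then take a rational section of the exceptional divisor $E_{k_0}\to B_{k_0}$, asserting it is a projective bundle. There are two problems. First, the given $f$ is an arbitrary resolution and need not be such a composition at all; you would at least have to pass to a Hironaka resolution and then compare it to $f$ (which cannot be done by weak factorization, since $S$ is singular). Second, and more seriously, even in Hironaka's sequence the intermediate schemes $X_{k-1}$ are \emph{singular} along the centers $B_k$ (resolving those singularities is the whole point), so $E_k\to B_k$ is the projectivized normal \emph{cone}, not a projective bundle; the ``rational sub-line-bundle gives a rational section'' argument simply does not apply. (In the embedded version of Hironaka, the ambient exceptional divisors are projective bundles, but their intersections with the strict transforms of $S$ are not, so the same issue persists.) Finally, even granting a lift to $Z_{k_0}\subset X_{k_0}$, the iteration of strict transforms through $\sigma_{k_0+1},\dots,\sigma_r$ may again run into centers containing the transformed cycle, so the concluding step is also incomplete. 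In short: the reduction to the case $Z'\subset f(\operatorname{Exc}(f))$ is correct and the need for a rational section/point is the right thing to prove, but the proposed construction of one is not valid.
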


\begin{proof}
We use induction on $d$. By Proposition \ref{prop1}, the following is an isomorphism \[f_*:\text{gr}_d G_0(X)\xrightarrow{\sim} \text{gr}_d G_0(S)\xrightarrow{\sim} G_0(\text{Spec}\,\C).\]  
For $V\subsetneqq S$ a subvariety, consider $g$ a resolution of singularities as follows:
\begin{equation*}
    \begin{tikzcd}
    Y\arrow[d,"g"]\arrow[r]&X\arrow[d,"f"]\\
    V\arrow[r, hook]&S.
    \end{tikzcd}
\end{equation*}
The surjectivity of $f_*$ for $i<d$ follows using Proposition \ref{prop1} and the induction hypothesis.
\end{proof}

\subsection{Quasi-smooth schemes}\label{qss}
\subsubsection{} A morphism $f:X\to Y$ of derived schemes is \textit{quasi-smooth} if it is locally of finite presentation and the cotangent complex $\mathbb{L}_f$ has Tor-amplitude $\leq 1$. Alternatively, there is a factorization \[X\xrightarrow{\iota} X'\xrightarrow{\pi}Y\] with $\pi$ smooth and $\iota$ a quasi-smooth immersion, that is, the complex $\mathbb{L}_\iota[-1]$ is a vector bundle, see \cite[Section 2]{KR}, \cite[Subsections 2.1 and 2.2]{AG}. A proper map between smooth varieties is quasi-smooth. Any quasi-smooth map has a well defined relative dimension.

A derived scheme $X$ is quasi-smooth if the structure morphism $X\to \text{Spec}\,\mathbb{C}$ is quasi-smooth. Any quasi-smooth scheme has a well defined dimension.

\subsubsection{} We list some properties satisfied by quasi-smooth morphisms and schemes. We are using them only in the proof of an excess intersection formula, see \eqref{eul}. Consider a cartesian diagram
\begin{equation*}
    \begin{tikzcd}
     X'\arrow[d]\arrow[r,"f'"]&Y'\arrow[d]\\
     X\arrow[r,"f"]&Y.
    \end{tikzcd}
\end{equation*}
If $f$ is quasi-smooth, then $f'$ is quasi-smooth and \begin{equation}\label{reldimeq}
    \text{reldim}\,(f)=\text{reldim}\,(f').
\end{equation} Composition of quasi-smooth maps is quasi-smooth. 

Let $Y$ be a quasi-smooth scheme. Define a filtration $F^iG_\cdot(Y)\subset G_\cdot(Y)$ generated by images of \[f_*: G_\cdot(Y)\to G_\cdot(X)\] for $f:Y\to X$ a quasi-smooth morphism with $\text{reldim}\,(f)\leq -i$. It induces a filtration on $K_\cdot(Y)$. One can define similarly a filtration $F^{\text{dim}}_iG_\cdot(Y)$. When $Y$ is a classical scheme, this definitions recover the filtrations introduced in Subsection \ref{fil}, see \cite[Theorem 6.21]{K}. By \eqref{reldimeq}, pullback respects the filtrations $F^i$. Pushforward clearly respects the filtrations $F^{\text{dim}}_i$. 

\subsubsection{}\label{zerodiff} Let $X$ be a quasi-smooth scheme. There exists an open set $U\subset X$ of codimension $\geq 1$ and a vector bundle $\mathcal{E}$ on $U^{\text{cl}}$ such that
\[\mathcal{O}_U\cong \mathcal{O}_{U^{\text{cl}}}[\mathcal{E}[1];d],\]
where $d:\E\to\mathcal{O}_U$ is the zero map. The bundle $\mathcal{E}$ has rank $\dim X^{\text{cl}}-\dim X$.


\subsection{The perverse filtration in cohomology}\label{percoh}
Let $S$ be a scheme over $\mathbb{C}$. 
Let $D^b_c(S)$ be the derived category of bounded complexes of rational constructible sheaves \cite[Section 2]{dCM2}. Consider the perverse $t$-structure $\left(\mathcal{P}^{\leq i}, \mathcal{P}^{\geq i}\right)_{i\in\mathbb{Z}}$ on this category. There are functors: 
\begin{align*}
{}^p\tau^{\leq i}:&D^b_c(S)\to \mathcal{P}^{\leq i},\\ {}^p\tau^{\geq i}:&D^b_c(S)\to \mathcal{P}^{\geq i}
\end{align*}
such that
for $F\in D^b_c(S)$ there is a distinguished triangle in $D^b_c(S)$:
\[{}^p\tau^{\leq i}F\to F\to {}^p\tau^{\geq i+1}F\xrightarrow{[1]}.\]
For a proper map $f:X\to S$ and $F\in D^b_c(X)$, the perverse filtration on $H^\cdot(X,F)$ is defined as the image of
\[{}^pH^{\leq i}_f(X,F):=H^\cdot(S,{}^p\tau^{\leq i}Rf_*F)\to H^\cdot(S, Rf_*F)=H^\cdot(X,F).\]
For $F=IC_X$, the decomposition theorem implies that
\[{}^pIH^{\leq i}_f(X)\hookrightarrow IH^\cdot(X).\]
Let $f:X\to S$ be a generically finite morphism from $X$ smooth, let $U$ be a smooth open subset of $S$ such that $f^{-1}(U)\to U$ is étale, and let $L=f_*\left(\mathbb{Q}_{f^{-1}(U)}\right)$.

For $V\hookrightarrow S$, denote by $X_V:=f^{-1}(V)$. Let $A_V$ be the set of irreducible components of $X_V$. Let $c^a_V$ be the codimension on $\iota^a_V: X_V^a\hookrightarrow X$ for $a\in A_V$. Consider a resolution of singularities $\pi_V^a: Y_V^a\to X_V^a$. 
Let $g_V^a:=f\pi_V^a:Y_V^a\to V$.
Then
\[{}^p\tau^{\leq 0}Rf_*IC_X\cong \text{ker}\left(Rf_*IC_X\to \bigoplus_{V\subsetneqq S}\bigoplus_{a\in A_V}\left({}^p\tau^{>c_V^a}Rg^a_{V*}IC_{Y^a_V}\right)[c^a_V]\right).\]
Define the subspace
\[{}^p\widetilde{\tau}^{\leq 0}Rf_*IC_X:=\text{image}\left(\bigoplus_{V\subsetneqq S}\bigoplus_{a\in A_V}\left({}^p\tau^{\leq-c_V^a}Rg^a_{V*}IC_{Y^a_V}\right)[-c^a_V]\to {}^p\tau^{\leq 0}Rf_*IC_X\right).\]
By a computation of Corti--Hanamura \cite[Proposition 1.5, Theorem 2.4]{CH2}, there is an isomorphism:
\begin{equation}\label{comp1}
IC_S(L)\cong{}^p\tau^{\leq 0}Rf_*IC_X\big/{}^p\widetilde{\tau}^{\leq 0}Rf_*IC_X.
\end{equation}
Further, consider a more general morphism $f:X\to S$ with $X$ smooth. Let $V\subsetneqq S$ be a subvariety. For $i\in\mathbb{Z}$, denote by ${}^p\mathcal{H}^i(Rf_*IC_X)_V$ the direct sum of simple summands of ${}^p\mathcal{H}^i(Rf_*IC_X)$ with support equal to $V$.
A computation of Corti--Hanamura \cite[Proposition 1.5]{CH2} shows that:
\begin{equation}\label{comp2}
{}^p\mathcal{H}^i\left(Rf_*IC_X\right)_V\hookrightarrow \bigoplus_{a\in A_V}{}^{p}\mathcal{H}^{i+c^a_V}\left(Rg^a_{V*}IC_{Y^a_V}\right).
\end{equation}

\section{The perverse filtration in $K$-theory}\label{3}

\subsection{Definition of the filtration $P'^{\leq \cdot}$}\label{Kper}
Let 
$f:X\to S$ be a proper map between varieties. We define an increasing filtration 
\[P'^{\leq i}_f\text{gr}^\cdot G_\cdot(X)\subset \text{gr}^\cdot G_\cdot(X).\]
It induces a filtration on $\text{gr}^\cdot K_\cdot(X)$. 
We use the notations from Subsection \ref{percoh}. 
Let $Y\hookrightarrow X$ be a subvariety and let $T\xrightarrow{\pi}S$ be a map generically finite onto its image from $T$ smooth.
Consider the diagram:
\begin{equation*}
    \begin{tikzcd}
    T\times X\arrow[d,"q"]\arrow[r,"p"]&X\arrow[d,"f"]\\
    T\arrow[r,"\pi"]&S.
    \end{tikzcd}
\end{equation*}
For a correspondence $\Gamma\in \text{gr}_{\dim X-s}G_{T\times_S Y, 0}(T\times X)$, define
\[\Phi_\Gamma:=p_*(\Gamma\otimes q^*(-)):\text{gr}^\cdot K_i(T)\to \text{gr}^{\cdot-s} G_{Y, i}(X).\]
We usually drop the shift by $s$ in the superscript of $\text{gr}^\cdot G_Y(X)$.
We define the subspace of $\text{gr}^\cdot G_Y(X)$:  
\begin{align*}
P'^{\leq i}_{f,T}&:=\text{span}_\Gamma\left(\Phi_\Gamma: \text{gr}^\cdot K_\cdot(T)\to \text{gr}^\cdot G_Y(X)\right)\\
P'^{\leq i}_{f}&:=\text{span}\left(P'^{\leq i}_{f,T}\text{ for all maps }\pi\text{ as above}\right),\end{align*}
where the dimension of the correspondence satisfies 
\begin{equation}\label{ranges}
    \left\lfloor \frac{i+\dim X-\dim T}{2}\right\rfloor\geq s.
\end{equation}
We also define a quotient of $\text{gr}^\cdot G_Y(X)$:
\[P'^{\leq i}_{f}\text{gr}^\cdot G_Y(X)\hookrightarrow \text{gr}^\cdot G_Y(X)\twoheadrightarrow P'^{>i}_{f}\text{gr}^\cdot G_Y(X).\]


\subsection{Functoriality of the filtration $P'^{\leq \cdot}$}

\begin{prop}\label{pullback}
Let $X$ and $Y$ be smooth varieties with $c=\dim X-\dim Y$. 
Consider proper maps
\begin{equation*}
    \begin{tikzcd}[column sep=small]
    Y\arrow[dr,"g"']\arrow[rr,"h"] && X\arrow[dl,"f"]\\
    & S. &
    \end{tikzcd}
\end{equation*}
There are induced maps
\[h^*: P'^{\leq i-c}_{f}\text{gr}^\cdot K_\cdot(X)\to P'^{\leq i}_{g}\text{gr}^\cdot K_\cdot(Y).\]
\end{prop}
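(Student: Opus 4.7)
The plan is to reduce to a single generating element of the source filtration and then use base change to identify the pullback with a new correspondence, which I then check satisfies the axioms at level $i$ for $g$. By definition, $P'^{\leq i-c}_f\text{gr}^\cdot K_\cdot(X)$ is the span of classes $\Phi_\Gamma(\beta)$ for smooth varieties $T$ with generically finite maps $\pi:T\to S$, classes $\beta\in\text{gr}^\cdot K_\cdot(T)$ and correspondences $\Gamma\in\text{gr}_{\dim X-s}G_{T\times_S X,0}(T\times X)$ satisfying
\[
\left\lfloor \frac{(i-c)+\dim X-\dim T}{2}\right\rfloor\geq s.
\]
By linearity it suffices, for each such generator, to exhibit a correspondence on $T\times Y$ admissible for $P'^{\leq i}_{g,T}$ whose operator computes $h^*\Phi_\Gamma(\beta)$.

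Form the diagram
\[
\begin{tikzcd}
T\times Y \arrow[r,"1\times h"] \arrow[d,"p'"'] & T\times X \arrow[r,"q"] \arrow[d,"p"] & T \\
Y \arrow[r,"h"] & X &
\end{tikzcd}
\]
in which the left square is cartesian, and set $q':=q\circ(1\times h)$, the projection from $T\times Y$ to $T$. Since $p$ is the smooth (hence flat) projection, the square is Tor-independent, so base change $h^*p_*=p'_*(1\times h)^*$ holds on perfect complexes; combined with the projection formula this gives
\[
h^*\Phi_\Gamma(\beta)=h^*p_*(\Gamma\otimes q^*\beta)=p'_*\bigl((1\times h)^*\Gamma\otimes q'^*\beta\bigr)=\Phi_{(1\times h)^*\Gamma}(\beta).
\]

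It remains to verify that $\Gamma':=(1\times h)^*\Gamma$ is admissible at level $i$ for $g$. The support condition is immediate since $(1\times h)^{-1}(T\times_S X)=T\times_S Y$. For the dimension class, pullback between smooth varieties preserves the codimension filtration (Subsection \ref{fil}); rewriting $\text{gr}_{\dim X-s}$ on the smooth variety $T\times X$ as $\text{gr}^{\dim T+s}$, the class $\Gamma'$ lies in $\text{gr}^{\dim T+s}G_{T\times_S Y,0}(T\times Y)=\text{gr}_{\dim Y-s}G_{T\times_S Y,0}(T\times Y)$. Finally the admissibility inequality at level $i$ for $g$ reads
\[
\left\lfloor \frac{i+\dim Y-\dim T}{2}\right\rfloor\geq s,
\]
which is exactly the original inequality after substituting $c=\dim X-\dim Y$. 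Hence $\Phi_{\Gamma'}(\beta)\in P'^{\leq i}_{g,T}\text{gr}^\cdot K_\cdot(Y)$, proving the claim for the generator and thus for all of $P'^{\leq i-c}_f$.

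The only genuinely delicate point is the $K$-theoretic base-change identity for the square above. Tor-independence is automatic from the flatness of $p$, and the projection formula on $\operatorname{Perf}$ for proper morphisms of smooth varieties is standard; everything else is routine dimension bookkeeping, and the substitution $c=\dim X-\dim Y$ makes the numerical constraints line up with no room to spare.
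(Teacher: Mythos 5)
Your proof is correct and follows essentially the same path as the paper's: both rewrite $h^*\Phi_\Gamma$ as $\Phi_{(1\times h)^*\Gamma}$ via flat base change along the cartesian square over $h$ together with compatibility of $\otimes$ and pullback, and both then observe that $(1\times h)^*$ preserves codimension of support (so the pulled-back correspondence lands in $\text{gr}_{\dim Y-s}$) and that the shift $c=\dim X-\dim Y$ turns the admissibility inequality at level $i-c$ for $f$ into the one at level $i$ for $g$. The only cosmetic differences are notational ($T\times Y$ versus $Y\times T$, keeping the floor explicit versus clearing it), and your passing mention of the projection formula is unnecessary since the identity only uses that $(1\times h)^*$ is monoidal.
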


\begin{proof}
Let $T\to S$ be a generically finite map onto its image with $T$ smooth. It suffices to show that
\[h^*: P'^{\leq i-c}_{f,T}\KK\to P'^{\leq i}_{g,T}\,\text{gr}^\cdot K_\cdot(Y).\]
Consider the diagram:
\begin{equation*}
    \begin{tikzcd}
    Y\arrow[rr,"h"]& &X\\
    Y\times T\arrow[u,"p_Y"]\arrow[rr,"\widetilde{h}"]\arrow[dr,"q_Y"']& & X\times T\arrow[u,"p_X"]\arrow[dl,"q_X"]\\
     &T&
    \end{tikzcd}
\end{equation*}
Let $\Theta\in \text{gr}_{\dim X-s}G_{T\times_S X, 0}(T\times X)$ be a correspondence such that \[i\geq 2s-\dim X+\dim T.\] For $j\in\mathbb{Z}$, we have that:
\begin{equation*}
    \begin{tikzcd}
    \text{gr}^j K_\cdot(T)\arrow[r,"\Phi_\Theta"]\arrow[dr,"\Phi_{\widetilde{h}^*\Theta}"']&\text{gr}^{j-s} K_\cdot(X)\arrow[d,"h^*"]\\
     &\text{gr}^{j-s} K_\cdot(Y).
    \end{tikzcd}
\end{equation*}
To see this, we compute:
\[h^*\Phi_\Theta(F)=h^*p_{X*}(\Theta\otimes q_X^*F)=p_{Y*}\widetilde{h}^*(\Theta\otimes q_X^*F)=p_{Y*}(\widetilde{h}^*\Theta\otimes q_Y^*F)=\Phi_{\widetilde{h}^*\Theta}(F).\]
The correspondence $\widetilde{h}^*\Theta\in \text{gr}_{\dim Y-s}G_{T\times_S Y}(T\times Y)$ satisfies \[i+c\geq 2s-\dim Y+\dim T,\] and this implies the desired conclusion.
\end{proof}

\begin{prop}\label{pushfor}
Let $X$ and $Y$ be varieties with proper maps
\begin{equation*}
    \begin{tikzcd}[column sep=small]
    Y\arrow[dr,"g"']\arrow[rr,"h"] && X\arrow[dl,"f"]\\
    & S &
    \end{tikzcd}
\end{equation*}
Let $c=\dim X-\dim Y$. 
There are induced maps
\[h_*: P'^{\leq i-c}_{g}\text{gr}_\cdot G_\cdot(Y)\to P'^{\leq i}_{f}\text{gr}_\cdot G_\cdot(X).\]
\end{prop}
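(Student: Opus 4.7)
The plan is to mirror the proof of Proposition \ref{pullback}, replacing pullback by pushforward and using the projection formula to convert $h_*\Phi_\Gamma$ into $\Phi_{\widetilde{h}_*\Gamma}$ for a related correspondence. Fix a map $\pi: T \to S$ generically finite onto its image with $T$ smooth; it suffices to show that $h_*\bigl(P'^{\leq i-c}_{g,T}\,\text{gr}_\cdot G_\cdot(Y)\bigr) \subseteq P'^{\leq i}_{f,T}\,\text{gr}_\cdot G_\cdot(X)$. Form the commutative diagram
\begin{equation*}
    \begin{tikzcd}
    Y \arrow[rr,"h"] & & X \\
    T\times Y \arrow[u,"p_Y"] \arrow[rr,"\widetilde{h}"] \arrow[dr,"q_Y"'] & & T\times X \arrow[u,"p_X"] \arrow[dl,"q_X"] \\
     & T &
    \end{tikzcd}
\end{equation*}
where $\widetilde{h}=h\times \id_T$, so the square is Cartesian; in particular $q_X\widetilde{h} = q_Y$ and $p_X\widetilde{h} = h\, p_Y$.

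Next I would take a generator $\Phi_\Gamma(F)$ of $P'^{\leq i-c}_{g,T}$, where $\Gamma\in \text{gr}_{\dim Y - s}G_{T\times_S Y,0}(T\times Y)$ satisfies $\lfloor(i-c+\dim Y-\dim T)/2\rfloor\geq s$, and compute using base change and the projection formula:
\[
h_*\Phi_\Gamma(F)
= h_* p_{Y*}\bigl(\Gamma\otimes q_Y^*F\bigr)
= p_{X*}\widetilde{h}_*\bigl(\Gamma\otimes \widetilde{h}^*q_X^*F\bigr)
= p_{X*}\bigl(\widetilde{h}_*\Gamma\otimes q_X^*F\bigr)
= \Phi_{\widetilde{h}_*\Gamma}(F).
\]
Since $\widetilde{h}$ is proper, pushforward respects $F^{\dim}_\cdot$; since $\widetilde{h}(T\times_S Y)\subset T\times_S X$, the support is controlled, so $\widetilde{h}_*\Gamma$ lies in $\text{gr}_{\dim Y - s}G_{T\times_S X, 0}(T\times X)$.

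Finally I would check the dimension bookkeeping. Writing $s' := s+c$ so that $\dim Y - s = \dim X - s'$, the correspondence $\widetilde{h}_*\Gamma$ has the form required to contribute to $P'^{\leq i}_{f,T}$ provided $\lfloor(i+\dim X-\dim T)/2\rfloor\geq s'$. But $\dim Y = \dim X-c$, so the source hypothesis $\lfloor(i-c+\dim Y-\dim T)/2\rfloor\geq s$ reads $\lfloor(i+\dim X-\dim T-2c)/2\rfloor\geq s$, and pulling the integer $c$ out of the floor gives exactly $\lfloor(i+\dim X-\dim T)/2\rfloor\geq s+c = s'$, as required.

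The only delicate point is the indexing shift by $c$ between the source $g$-filtration and the target $f$-filtration, together with the fact that we are working with the dimension filtration $F^{\dim}_\cdot$ (rather than codimension) because $X$ and $Y$ need not be smooth; once one tracks these conventions and uses that $c$ is an integer so it commutes with the floor, the projection formula calculation above closes the argument in complete parallel to Proposition \ref{pullback}.
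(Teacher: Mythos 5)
Your proposal is correct and follows essentially the same route as the paper's own proof: fix a generically finite $\pi:T\to S$, use base change and the projection formula to rewrite $h_*\Phi_\Gamma = \Phi_{\widetilde h_*\Gamma}$, observe $\widetilde h_*\Gamma$ is supported on $T\times_S X$ with the expected dimension, and conclude by the arithmetic of the bound \eqref{ranges}. Your explicit floor-function bookkeeping with $s' = s+c$ is, if anything, slightly more careful than the inline inequalities in the paper's proof, but the underlying computation is identical.
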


\begin{proof}
Let $T\to S$ be a generically finite map onto its image from $T$ smooth. We first explain that
\[h_*: P'^{\leq i-c}_{g,T}\text{gr}_\cdot G_\cdot(Y)\to P'^{\leq i}_{f,T}\,\text{gr}_\cdot G_\cdot(X).\]
We use the notation from the proof of Theorem \ref{pullback}. Consider a correspondence $\Gamma\in \text{gr}_{\dim Y-s}G_{T\times_S Y, 0}(T\times Y)$ such that
\[i\geq 2s-\dim Y+\dim T.\]
For $j\in\mathbb{Z}$, we have that:
\begin{equation*}
    \begin{tikzcd}
    \text{gr}_{\dim T-j} K_\cdot(T)\arrow[r,"\Phi_\Gamma"]\arrow[dr,"\Phi_{\widetilde{h}_*\Gamma}"']&\text{gr}_{\dim Y-j+s} G_\cdot(Y)\arrow[d,"h_*"]\\
     &\text{gr}_{\dim Y-j+s} G_\cdot(X).
    \end{tikzcd}
\end{equation*}
To see this, we compute:
\[h_*p_{Y*}(\Gamma\otimes q_Y^*F)=p_{X*}\widetilde{h}_*(\Gamma\otimes \widetilde{h}^*q_X^*F)=p_{X*}(\widetilde{h}_*\Gamma\otimes q_X^*F).\]
The correspondence \[\widetilde{h}_*\Gamma\in \text{gr}_{\dim Y-s}G_{T\times_S X}(T\times X)=\text{gr}_{\dim X-(c+s)}G_{T\times_S X}(T\times X)\] satisfies
\[i+c\geq 2(s+c)-\dim X+\dim T,\]
and thus the conclusion follows.
\end{proof}

We continue with some further properties of the filtration $P'^{\leq \cdot}$. The following is immediate:

\begin{prop}
Let $f:X\to S$ be a proper map. Let $U$ be an open subset of $S$, $X_U:=f^{-1}(U)$, $\iota: X_U\hookrightarrow X$, and $f_U:X_U\to U$. Then \[\iota^*: P'^{\leq i}_{f}\text{gr}_\cdot G_\cdot(X)\to P'^{\leq i}_{f_U}\text{gr}_\cdot G_\cdot(X_U).\]
\end{prop}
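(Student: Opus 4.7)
The plan is to reduce the statement to flat base change along the open immersion $\iota$. First I would pick a typical generator $\Phi_\Gamma(F) = p_{X*}(\Gamma \otimes q_X^* F)$ of $P'^{\leq i}_{f,T}$, given by a smooth variety $T \xrightarrow{\pi} S$ that is generically finite onto its image, a class $F \in \text{gr}^\cdot K_\cdot(T)$, and a correspondence $\Gamma \in \text{gr}_{\dim X - s} G_{T \times_S Y, 0}(T \times X)$ satisfying $\lfloor (i + \dim X - \dim T)/2 \rfloor \geq s$. The aim is to exhibit $\iota^* \Phi_\Gamma(F)$ as a generator of $P'^{\leq i}_{f_U, T_U}$, where $T_U := \pi^{-1}(U)$.

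Next, writing $\widetilde{\iota}: T_U \times X_U \hookrightarrow T \times X$ and $\iota_T: T_U \hookrightarrow T$ for the corresponding open immersions, the square relating $p_X$, $p_{X_U}$, $\iota$, and $\widetilde{\iota}$ is cartesian with all arrows flat. Flat base change then yields
\[\iota^* p_{X*}(\Gamma \otimes q_X^* F) = p_{X_U *}\bigl(\widetilde{\iota}^* \Gamma \otimes q_{X_U}^* \iota_T^* F\bigr) = \Phi_{\Gamma_U}(\iota_T^* F),\]
with $\Gamma_U := \widetilde{\iota}^* \Gamma$, so the only remaining issue is to check that $\Phi_{\Gamma_U}(\iota_T^*F)$ really qualifies as a generator of the filtration on the $U$-side.

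To conclude I would verify the bookkeeping: $T_U$ is smooth as an open subset of the smooth variety $T$; the induced map $T_U \to U$ is generically finite onto its image, or $T_U = \emptyset$ in which case the generator vanishes and there is nothing to show; $\Gamma_U$ is supported on $T_U \times_U (Y \cap X_U)$ and sits in $\text{gr}_{\dim X - s} G_{T_U \times_U Y_U, 0}(T_U \times X_U)$ for the same $s$, since open restriction preserves dimension of support; and the numerical inequality $\lfloor (i + \dim X_U - \dim T_U)/2 \rfloor \geq s$ holds verbatim because $\dim X_U = \dim X$ and $\dim T_U = \dim T$. Taking spans over all $T$ and $\Gamma$ delivers the proposition. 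There is no real obstacle: the entire content is flat base change for an open immersion, combined with the routine observation that open restriction preserves all of the numerical data controlling the filtration.
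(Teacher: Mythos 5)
Your proof is essentially the argument the paper has in mind; the paper declares the proposition ``immediate'' and provides no written proof, and flat base change for the open immersion plus the trivial bookkeeping on dimensions and the inequality \eqref{ranges} is indeed all that is needed.

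One small imprecision to flag: the square you invoke, with corner $T_U\times X_U$ mapping via $\widetilde\iota$ to $T\times X$ and via $p_{X_U}$ to $X_U$, is not actually cartesian --- the fiber product of $T\times X$ with $X_U$ over $X$ is $T\times X_U$, not $T_U\times X_U$. The genuine cartesian square has corner $T\times X_U$, and flat base change along $\iota$ gives $\iota^*p_{X*}(\Gamma\otimes q_X^*F)=p'_*\bigl(\Gamma|_{T\times X_U}\otimes (q_X^*F)|_{T\times X_U}\bigr)$ with $p':T\times X_U\to X_U$. To land in the filtration on the $U$-side you then observe that $\Gamma$ is supported on $T\times_S Y$, and $(T\times_S Y)\cap(T\times X_U)=T_U\times_U Y_U\subset T_U\times X_U$, so the restricted class is already supported inside the open piece $T_U\times X_U$; since $q_X|_{T_U\times X_U}$ factors through $T_U$, one has $(q_X^*F)|_{T_U\times X_U}=q_{X_U}^*\iota_T^*F$, recovering $\Phi_{\Gamma_U}(\iota_T^*F)$ as you claim. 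With that one-line patch the argument is complete, and the remaining numerical checks (dimension of $T_U$ and $X_U$, generic finiteness of $T_U\to U$, the $T_U=\emptyset$ degenerate case) are all correct as you state them.
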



\begin{prop}\label{eulermult}
Let $f:X\to S$ be a proper map from $X$ smooth and consider $e\in \text{gr}^jK_0(X)$. Then 
\[e\cdot P'^{\leq i}_f\text{gr}^a K_\cdot(X)\subset P'^{\leq i+2j}_f\text{gr}^{a+j} K_\cdot(X).\]
\end{prop}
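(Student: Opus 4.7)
The plan is to reduce to a single generating correspondence and then use the projection formula to absorb multiplication by $e$ into the correspondence itself. Since $P'^{\leq i}_f$ is defined as a span of the classes $\Phi_\Gamma(F)$, it is enough to treat one such generator. Fix $\pi:T\to S$ generically finite onto its image with $T$ smooth, a correspondence $\Gamma\in\text{gr}_{\dim X-s}G_{T\times_S X,0}(T\times X)$ with $\lfloor(i+\dim X-\dim T)/2\rfloor\geq s$, and $F\in\text{gr}^\cdot K_\cdot(T)$ for which $\Phi_\Gamma(F)\in\text{gr}^aK_\cdot(X)$.

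Applying the projection formula to the flat projection $p:T\times X\to X$ yields
\[e\cdot\Phi_\Gamma(F)=e\cdot p_*(\Gamma\otimes q^*F)=p_*\bigl((p^*e\cdot\Gamma)\otimes q^*F\bigr)=\Phi_{p^*e\cdot\Gamma}(F),\]
so it suffices to show that $p^*e\cdot\Gamma$ is an admissible correspondence at filtration level $i+2j$, with the codimension shift increased by $j$. Since $X$ is smooth, $p^*e\in\text{gr}^jK_0(T\times X)$; by multiplicativity of the codimension filtration on the smooth scheme $T\times X$,
\[p^*e\cdot\Gamma\in\text{gr}^{\dim T+s+j}K_0(T\times X)=\text{gr}_{\dim X-(s+j)}G_0(T\times X),\]
and its support is still contained in $T\times_SX$ since that of $\Gamma$ is.

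Setting $s':=s+j$, the hypothesis $\lfloor(i+\dim X-\dim T)/2\rfloor\geq s$ immediately gives $\lfloor((i+2j)+\dim X-\dim T)/2\rfloor\geq s'$, so $p^*e\cdot\Gamma$ satisfies the dimension bound defining $P'^{\leq i+2j}_{f,T}$. The increase in the codimension shift from $s$ to $s+j$ places $\Phi_{p^*e\cdot\Gamma}(F)$ in $\text{gr}^{a+j}K_\cdot(X)$, yielding the containment $e\cdot\Phi_\Gamma(F)\in P'^{\leq i+2j}_f\text{gr}^{a+j}K_\cdot(X)$. The only step that requires care is the multiplicativity of the codimension filtration on the smooth scheme $T\times X$, which is standard and avoids any excess-intersection subtleties.
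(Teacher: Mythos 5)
Your proof is correct and follows essentially the same route as the paper: pull back $e$ along the projection $p:T\times X\to X$, absorb it into the correspondence via the projection formula so that $e\cdot\Phi_\Gamma(F)=\Phi_{p^*e\cdot\Gamma}(F)$, observe that $p^*e\cdot\Gamma$ lies one codimension step $j$ deeper while keeping support in $T\times_S X$, and check the dimension bound with $s$ replaced by $s+j$. The paper's proof is the same argument stated more briefly; your additional explicit verification of the floor inequality and the codimension-shift bookkeeping is harmless and correct.
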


\begin{proof}
Let $T\to S$ be a generically finite map onto its image with $T$ smooth and let $\Theta\in \text{gr}_aG_{T\times_SX, 0}(T\times X)$. Let $p:T\times X\to X$ be the natural projection. Then
\[p^*(e)\cdot \Theta\in \text{gr}_{a-j}G_{T\times_SX, 0}(T\times X).\]
For $x\in \text{gr}_\cdot K_\cdot(T)$, we have that
\[e\cdot \Phi_\Theta(x)=\Phi_{p^*(e)\cdot \Theta}(x),\] and the conclusion thus follows.
\end{proof}

\begin{prop}\label{smpr}
Let $X$ and $Y$ be smooth varieties with proper maps
\begin{equation*}
    \begin{tikzcd}[column sep=small]
    Y\arrow[dr,"g"']\arrow[rr,"h"] && X\arrow[dl,"f"]\\
    & S &
    \end{tikzcd}
\end{equation*}
such that $h$ is surjective. Let $c=\dim X-\dim Y$. Then \begin{align*}
    h_*\left(P'^{\leq i}_f\text{gr}_\cdot K_\cdot(Y)_\mathbb{Q}\right)&=P'^{\leq i+c}_f\text{gr}_\cdot K_\cdot(X)_\mathbb{Q}\\
    h^*\big(\text{gr}^\cdot K(X)_\mathbb{Q}\big)\cap P'^{\leq i+c}_g\text{gr}^\cdot K_\cdot(Y)_\mathbb{Q}&=h^*\left(P'^{\leq i}_f\text{gr}^\cdot K_\cdot(X)_\mathbb{Q}\right).
    \end{align*}
    If there exists $X'\to Y$ such that the induced map $X'\to X$ is birational, then the above isomorphisms hold integrally.
\end{prop}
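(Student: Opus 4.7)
The inclusions $h_*(P'^{\leq i}_g) \subset P'^{\leq i+c}_f$ and $h^*(P'^{\leq i}_f) \subset P'^{\leq i+c}_g$ are immediate from Propositions \ref{pushfor} and \ref{pullback} by reindexing, so I would focus on the two reverse inclusions. The plan is to introduce an auxiliary smooth irreducible variety $X'$ of dimension $\dim X$ fitting into factorizations
\[X' \xrightarrow{h'} Y \xrightarrow{h} X, \qquad g' := g \circ h' = f \circ h'',\]
with $h'' := h \circ h'$ generically finite surjective (for the rational statement), or birational (for the integral statement, where $X'$ is supplied by the extra hypothesis). In the rational case, I would construct $X'$ by compactifying $Y$ via Chow's lemma, cutting by a general linear subspace of codimension $\dim Y - \dim X$, and selecting an irreducible component dominating $X$; this is smooth by Bertini. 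Then $h'$ has relative dimension $-c$ and $h''$ has relative dimension $0$.

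For the surjectivity $P'^{\leq i+c}_f \subset h_*(P'^{\leq i}_g)_{\mathbb{Q}}$, I would take a generator $\Phi_\Theta(\xi)$ of $P'^{\leq i+c}_{f,T}$ and lift $\Theta$ through $\widetilde{h''} = \operatorname{id}_T \times h'' : T \times X' \to T \times X$. Since $\widetilde{h''}$ is proper generically finite surjective between smooth varieties of the same dimension, the projection formula gives $\widetilde{h''}_*\,\widetilde{h''}^* = \deg(h'') \cdot \operatorname{id}$ rationally, so one obtains a lift $\Theta' \in \text{gr}_{\dim X' - s}\, G_{T \times_S X',\,0}(T \times X')_{\mathbb{Q}}$ with $\widetilde{h''}_* \Theta' = \Theta$; the support is contained in $(\widetilde{h''})^{-1}(T \times_S X) = T \times_S X'$ because $h''$ is an $S$-morphism, and the correct dimension of support can be arranged by restricting to the generically finite locus (or inducting on lower-dimensional strata). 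Because $\dim X' = \dim X$, the bound \eqref{ranges} for $P'^{\leq i+c}_{g'}$ is unchanged and $\Phi_{\Theta'}(\xi) \in P'^{\leq i+c}_{g'}$. Proposition \ref{pushfor} applied to $h'$ then lands $h'_* \Phi_{\Theta'}(\xi)$ in $P'^{\leq i}_g$, and the identity $h_* \Phi_{\widetilde{h'}_* \Theta'} = \Phi_{\widetilde{h''}_* \Theta'} = \Phi_\Theta$ from the proof of Proposition \ref{pushfor} realizes $\Phi_\Theta(\xi)$ as $h_*\bigl(h'_* \Phi_{\Theta'}(\xi)\bigr)$.

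For the dual inclusion $h^*\bigl(\text{gr}^\cdot K(X)_{\mathbb{Q}}\bigr) \cap P'^{\leq i+c}_g \subset h^*(P'^{\leq i}_f)$, given $\alpha$ with $h^*\alpha \in P'^{\leq i+c}_g$, Proposition \ref{pullback} applied to $h'$ places $h''^*\alpha = h'^* h^*\alpha$ in $P'^{\leq i}_{g'}$, and Proposition \ref{pushfor} applied to $h''$ places $h''_* h''^*\alpha$ in $P'^{\leq i}_f$. The projection formula will identify $h''_* h''^*\alpha = \alpha \cdot [h''_*\mathcal{O}_{X'}]$ with $\deg(h'')\cdot\alpha$ rationally in $\text{gr}^\cdot K(X)_{\mathbb{Q}}$, since $[h''_*\mathcal{O}_{X'}] - \deg(h'')[\mathcal{O}_X] \in F^{\geq 1}$ contributes only to strictly higher-codimension graded pieces. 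Hence $\alpha \in P'^{\leq i}_f$. Under the extra hypothesis that $X' \to X$ is birational, we have $\deg(h'') = 1$ along with $Rh''_*\mathcal{O}_{X'} = \mathcal{O}_X$ and $R\widetilde{h''}_*\mathcal{O}_{T \times X'} = \mathcal{O}_{T \times X}$ (both being proper birational maps between smooth schemes), so the projection formula will hold integrally and the above arguments go through without denominators.

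The hard part will be verifying that the lifted correspondence $\Theta'$ can be realized in the correct dimension-of-support stratum inside $T \times_S X'$, accounting for the possible non-flatness of $\widetilde{h''}$; once this technical point is secured, the remainder is careful bookkeeping of filtration levels through Propositions \ref{pullback} and \ref{pushfor} together with the projection formula.
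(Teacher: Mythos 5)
Your overall strategy coincides with the paper's: introduce an auxiliary smooth $X'$ of dimension $\dim X$ with maps $X'\to Y\to X$ such that $X'\to X$ is generically finite surjective (respectively birational, under the extra hypothesis), and use the fact that the composite pushforward $\text{gr}_\cdot K_\cdot(X')\to\text{gr}_\cdot K_\cdot(X)$ is multiplication by the degree. The main implementation difference is that the paper closes the argument one level higher than you do: rather than lifting the correspondence $\Theta$ through $\widetilde{h''}$ to a new correspondence $\Theta'$ on $T\times X'$ and re-checking dimension-of-support bounds, the paper simply pulls back the \emph{class} $\alpha\in P'^{\leq i+c}_f$ directly via $(hi)^*$, which lands in $P'^{\leq i+c}_{f'}$ by Proposition~\ref{pullback} (since $\dim X'=\dim X$), then pushes forward by $i_*$ and $h_*$ using Proposition~\ref{pushfor}, and concludes by the degree formula. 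That shortcut makes the ``hard part'' you flag at the end — tracking the support and codimension of $\Theta'=\widetilde{h''}^*\Theta/\deg$ — disappear, since it is already packaged into Propositions~\ref{pullback} and~\ref{pushfor}. (For the record, your worry is also not genuinely an obstacle in your version: $\widetilde{h''}$ is a proper surjective morphism between smooth varieties of equal dimension, so $\widetilde{h''}^*$ preserves codimension of support and the relevant filtration level; you do not need flatness.) Your construction of $X'$ via a generic linear section of a quasi-projective compactification of $Y$ is more explicit than the paper, which leaves the existence of such an $X'$ implicit and points to \cite[Proposition 3.11]{CH2}; that extra detail is a welcome addition and not a divergence in method.
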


\begin{proof}
The statement and its proof are similar to \cite[Proposition 3.11]{CH2}.

Let $i:X'\to Y$ be a map such that $hi:X'\to X$ is generically finite and surjective. Let $f':=fi:X'\to S$. Then, by Proposition \ref{pushfor}:
\[P'^{\leq i+c}_{f'}\text{gr}_\cdot K_\cdot(X')\xrightarrow{i_*} P'^{\leq i}_g\text{gr}_\cdot K_\cdot(Y)\xrightarrow{h_*} P'^{\leq i+c}_f\text{gr}_\cdot K_\cdot(X).\]
The map $h_*i_*:\text{gr}_\cdot K_\cdot(X')\to \text{gr}_\cdot K_\cdot(X)$ is multiplication by the degree of the map $hi$, so it is an isomorphism rationally. It is an isomorphism integrally if $X'\to X$ has degree $1$. The pullback statement is similar.
\end{proof}

\subsection{The filtration $P^{\leq \cdot}$}\label{filtPP}

Let $f:X\to S$ be a proper map from $X$ smooth. Let $V\hookrightarrow S$ be a subvariety, and let $A_V$ the set of irreducible components of $f^{-1}(V)$. For an irreducible component $X^a_V$ of $f^{-1}(V)$, consider a resolution of singularities $\pi^a_V$ as follows:
\begin{equation*}
    \begin{tikzcd}
    \widetilde{X^a_V}\arrow[dr, "\widetilde{f^a_V}"']\arrow[r,"\pi^a_V"]& X^a_V\arrow[r, hook, "\iota^a_V"]\arrow[d,"f^a_V"]&X\arrow[d,"f"]\\
     &V\arrow[r, hook]&S.
    \end{tikzcd}
\end{equation*}
Let $c^a_V$ be the codimension of $X^a_V$ in $X$. Denote by $\tau^a_V=\iota^a_V\pi^a_V$.
Consider a subvariety $Y\hookrightarrow X$. Define
\[P^{\leq i}_f\text{gr}^\cdot G_Y(X):=\bigcap_{V\subsetneqq S}\bigcap_{a\in A_V}\text{ker}\left(\tau^{a*}_V:P'^{\leq i}_f\text{gr}^\cdot G_Y(X)\to P'^{>i+c^a_V}_{\widetilde{f^a_V}}\text{gr}^\cdot K_\cdot\left(\widetilde{X^a_V}\right)\right).\]
The definition is independent of the resolutions $\pi^a_V$ chosen. Indeed, consider two different resolutions $\widetilde{X^a_V}$, $\widetilde{X'^a_V}$. There exists $W$ such that
\begin{equation*}
    \begin{tikzcd}
     & W\arrow[dl,"\pi"']\arrow[dr,"\pi'"]& \\
     \widetilde{X^a_V}\arrow[dr]& &\widetilde{X'^a_V}\arrow[dl]\\
      &X^a_V&
    \end{tikzcd}
\end{equation*}
where the maps $\pi$ and $\pi'$ are successive blow-ups along smooth subvarieties of $\widetilde{X^a_V}$ and $\widetilde{X'^a_V}$, respectively. 
Let $\tau'^a_V: \widetilde{X'^a_V}\to X$ as above. Then $\tau^a_V\pi=\tau'^a_V\pi'$.
By Proposition \ref{smpr},
\begin{multline*}
    \text{ker}\left(\tau^{a*}_V:P'^{\leq i}_f\text{gr}^\cdot G_Y(X)\to P'^{>i+c^a_V}_{\widetilde{f^a_V}}\text{gr}^\cdot K_\cdot\left(\widetilde{X^a_V}\right)\right)\cong\\
    \text{ker}\left(\pi^*\tau^{a*}_V:P'^{\leq i}_f\text{gr}^\cdot G_Y(X)\to P'^{>i+c^a_V}_{\widetilde{f^a_V}}\text{gr}^\cdot K_\cdot(W)\right)\cong\\
    \text{ker}\left(\tau'^{a*}_V:P'^{\leq i}_f\text{gr}^\cdot G_Y(X)\to P'^{>i+c^a_V}_{\widetilde{f^a_V}}\text{gr}^\cdot K_\cdot\left(\widetilde{X'^a_V}\right)\right).
\end{multline*}

\begin{thm}\label{functorialityP}
Let $X$ and $Y$ be smooth varieties with $c=\dim X-\dim Y$. 
Consider proper maps
\begin{equation*}
    \begin{tikzcd}[column sep=small]
    Y\arrow[dr,"g"']\arrow[rr,"h"] && X\arrow[dl,"f"]\\
    & S. &
    \end{tikzcd}
\end{equation*}
There are induced maps
\begin{align*}
h^*&: P^{\leq i-c}_{f}\text{gr}^\cdot K_\cdot(X)\to P^{\leq i}_{g}\text{gr}^\cdot K_\cdot(Y)\\
h_*&: P^{\leq i-c}_{g}\text{gr}_\cdot K_\cdot(Y)\to P^{\leq i}_{f}\text{gr}_\cdot K_\cdot(X).
\end{align*}
\end{thm}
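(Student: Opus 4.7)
The plan is to verify, for each of $h^*$ and $h_*$, the two conditions in the definition of $P^{\leq i}$ inside $P'^{\leq i}$: membership in the ambient $P'^{\leq i}$ of the target, together with the vanishing of the pullback to every smooth resolution $\widetilde{Y^b_V}$ of an irreducible component of $g^{-1}(V)$ (in the pullback case), or $\widetilde{X^a_V}$ of an irreducible component of $f^{-1}(V)$ (in the pushforward case). The first condition is immediate from Propositions \ref{pullback} and \ref{pushfor}, so the entire content is the vanishing condition.

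For the pullback $h^*$, fix $V\subsetneqq S$ and an irreducible component $b$ of $g^{-1}(V)$. Since $\widetilde{Y^b_V}$ is irreducible, $h\tau^b_V(\widetilde{Y^b_V})$ lies in a unique irreducible component $X^a_V$ of $f^{-1}(V)$. Choose a smooth scheme $W$ birationally dominating $\widetilde{Y^b_V}\times_{X^a_V}\widetilde{X^a_V}$, with the induced birational projection $\sigma:W\to\widetilde{Y^b_V}$ and proper map $\widetilde{\sigma}:W\to\widetilde{X^a_V}$ satisfying $h\tau^b_V\sigma=\tau^a_V\widetilde{\sigma}$. Then
\[\sigma^*\tau^{b*}_V h^*x=\widetilde{\sigma}^*\tau^{a*}_V x.\]
By hypothesis $\tau^{a*}_V x\in P'^{\leq i-c+c^a_V}_{\widetilde{f^a_V}}$, and since $\dim\widetilde{X^a_V}-\dim W=c-c^a_V+c^b_V$, Proposition \ref{pullback} places the right side in $P'^{\leq i+c^b_V}$ for the common composition $W\to V$. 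The integral version of Proposition \ref{smpr} applied to $\sigma$ (with $X'=W$ and the identity map, so that $X'\to\widetilde{Y^b_V}$ is birational) then yields $\tau^{b*}_V h^*x\in P'^{\leq i+c^b_V}_{\widetilde{g^b_V}}$, as required.

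For the pushforward $h_*$, fix $V\subsetneqq S$ and $a$ an irreducible component of $f^{-1}(V)$, and form the derived fiber square
\begin{equation*}
\begin{tikzcd}
Z\arrow[r,"h'"]\arrow[d,"\widetilde{\tau}"] & \widetilde{X^a_V}\arrow[d,"\tau^a_V"]\\
Y\arrow[r,"h"] & X
\end{tikzcd}
\end{equation*}
with $Z:=Y\times^L_X\widetilde{X^a_V}$ quasi-smooth of dimension $\dim Y-c^a_V$. Derived base change gives $\tau^{a*}_V h_*y=h'_*\widetilde{\tau}^*y$. Extending the filtrations $P'^{\leq\cdot}$ to the quasi-smooth setting of Subsection \ref{qss} and using the corresponding pullback and pushforward compatibilities (with relative dimensions $-c^a_V$ and $-c$ respectively), the hypothesis $y\in P^{\leq i-c}_g$ propagates to $\widetilde{\tau}^*y\in P'^{\leq i-c+c^a_V}$ on $Z$ and then to $h'_*\widetilde{\tau}^*y\in P'^{\leq i+c^a_V}_{\widetilde{f^a_V}}$. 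An alternative that avoids derived schemes decomposes the set-theoretic preimage $h^{-1}(X^a_V)^\red$ into its irreducible components $\{Y^b_V:b\in B_a\}$, where $B_a=\{b: h(Y^b_V)\subseteq X^a_V\}$, takes smooth common covers $W_b$ of each $\widetilde{Y^b_V}\times_{X^a_V}\widetilde{X^a_V}$, and expresses $\tau^{a*}_V h_*y$ via the $W_b$ using the excess intersection formula of Subsection \ref{qss} applied component by component.

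The main obstacle is the pushforward direction: one must verify rigorously that derived base change interacts correctly with the extension of $P'^{\leq\cdot}$ to quasi-smooth schemes, or equivalently that the excess intersection corrections in the classical alternative preserve the filtration at the expected level. The pullback direction, by contrast, is essentially formal once the common resolution diagram is set up, using only the already-established Propositions \ref{pullback} and \ref{smpr}.
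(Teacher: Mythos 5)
Your overall structure is correct, and the pullback direction is essentially sound: you reduce to comparing $\tau^{b*}_V h^* x$ with $\tau^{a*}_V x$ via a common smooth model $W$ and then invoke Propositions \ref{pullback} and \ref{smpr}. One small imprecision: the fibre product $\widetilde{Y^b_V}\times_{X^a_V}\widetilde{X^a_V}$ need not project birationally to $\widetilde{Y^b_V}$ (if $h(Y^b_V)$ lies in the exceptional locus of the resolution of $X^a_V$, the fibres jump); what you should take for $W$ is a smooth model of the closure of the graph of the rational map $\widetilde{Y^b_V}\dashrightarrow\widetilde{X^a_V}$, which \emph{is} birational over $\widetilde{Y^b_V}$. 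With that adjustment the argument goes through, and it fills in the paper's terse ``follows from Proposition \ref{pullback} and induction on dimension of $S$.''

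The pushforward direction is where the proposal has a genuine gap. You correctly identify that one wants to pass through the derived fibre product $Z = Y\times^L_X\widetilde{X^a_V}$ and that some form of derived base change or excess intersection is required, and you flag this as the ``main obstacle'' --- but you do not resolve it, and neither of your two suggested routes is carried out. Your first route (extending $P'^{\leq\cdot}$ to quasi-smooth schemes and tracking the filtration through $Z$) is not what the paper does and would require new definitions not present in Section \ref{3}; the hypothesis on $y$ is stated in terms of smooth resolutions $\widetilde{Y^b_V}$ of the components of $g^{-1}(V)$, so one cannot avoid comparing $Z$ with the $\widetilde{Y^b_V}$ and the excess classes that comparison introduces. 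Your second route is closer to the paper's actual argument, but the decisive computation is missing: the paper chases the diagram \eqref{eul} and reduces its commutativity to the identity
\[
\sum_{b\in B} p_{b*}(e_b) \;=\; 1 \in \text{gr}^0 K_0\!\left(Y^{\text{der}}_V\right),
\]
where $p_b:\widetilde{Y^b_V}\to Y^{\text{der}}_V$ and $e_b=\det\left(\mathbb{L}_{\tau^b_V}/h^{b*}_V\mathbb{L}_{\tau^a_V}\right)$. This is established using the local structure of a quasi-smooth scheme recalled in Subsection \ref{zerodiff}: over a dense open, $Y^{\text{der}}_V$ splits as a disjoint union of pieces $W^b$ with trivial derived structure given by a rank-$d_b$ bundle $\mathcal{E}^b$, and $p_{b*}$ of the Euler class of $\mathcal{E}^b$ is $1$ on each piece. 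Once \eqref{eul} commutes, Propositions \ref{pushfor} and \ref{eulermult} finish the proof. That Euler-class computation is the technical heart of the theorem, and it is absent from your proposal; without it, the claim that $\tau^{a*}_V h_* y\in P'^{\leq i+c^a_V}_{\widetilde{f^a_V}}$ is unproved.
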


\begin{proof}
The functoriality of $h^*$ follows from Proposition \ref{pullback} and induction on dimension of $S$.

We discuss the statement for $h_*$. We use induction on the dimension of $S$. The case of $S$ a point is clear as $P'^{\leq i}_f=P^{\leq i}_f$. We use the notation from the beginning of Subsection \ref{filtPP}. Let $V$ be a subvariety of $S$. Let $X^a_V$ be an irreducible component of $f^{-1}(V)$ with a resolution of singularities $\widetilde{X^a_V}\to X^a_V$. Let $B$ be the set of irreducible component of $Y_V$ over $X_V^a$. For $b\in B$, consider a resolution of singularities $\widetilde{Y^b_V}\to Y^b_V$ and maps such that
\begin{equation*}
    \begin{tikzcd}
    \bigsqcup_{b\in B}\widetilde{Y^b_V}\arrow[r,"\bigoplus_B h^b_V"]\arrow[d,"\bigoplus_B \tau^b_V"]& \widetilde{X^a_V}\arrow[d,"\tau^a_V"]\\
    Y\arrow[r,"h"]&X.
    \end{tikzcd}
\end{equation*}
Consider the cartesian diagram
\begin{equation*}\label{yder}
    \begin{tikzcd}
    Y^{\text{der}}_V\arrow[d,"\tau"]\arrow[r,"\widetilde{h}"]&\widetilde{X^a_V}\arrow[d,"\tau^a_V"]\\
    Y\arrow[r,"h"]&X.
    \end{tikzcd}
\end{equation*}
The scheme $Y^{\text{der}}_V$ is quasi-smooth, see Subsection \eqref{qss}, and $\text{reldim}\,\widetilde{h}=\text{reldim}\,h$. 
For $b\in B$, there is a map $p_b:\widetilde{Y^b_V}\to Y^{\text{der}}_V.$ Let $d_b=\dim \widetilde{Y^b_V}-\dim Y^{\text{der}}_V$ and define 
\[e_b=\text{det}\left(\mathbb{L}_{\tau^b_V}/h^{b*}_V\mathbb{L}_{\tau^a_V}\right)\in \text{gr}^{d_b}K_0\left(\widetilde{Y^b_V}\right).\] 
By a version of the excess intersection formula, the following diagram commutes:
\begin{equation}\label{eul}
    \begin{tikzcd}
    \text{gr}_\cdot K_\cdot(Y)\arrow[r,"h_*"]\arrow[d,"\bigoplus_{B} \tau^{b*}_V"]
    &\text{gr}_\cdot K_\cdot(X)\arrow[dd, "\tau^{a*}_V"]\\
    \bigoplus_B \text{gr}_\cdot K_\cdot(\widetilde{Y^b_V})\arrow[d,"\bigoplus_{ B}e_b"]& \\
    \bigoplus_B \text{gr}_\cdot K_\cdot(\widetilde{Y^b_V})\arrow[r,"\bigoplus_B h^b_{V*}"]& K_\cdot(\widetilde{X^a_V}),
    \end{tikzcd}
\end{equation}
where we have ignored shifts in the above gradings. We now explain that \eqref{eul} commutes.
Consider the diagram
\begin{equation*}
    \begin{tikzcd}
    \bigsqcup_B \widetilde{Y^b_V} \arrow[drr, bend left, "\bigoplus_B h^b_V"]\arrow[ddr, bend right, "\bigoplus_{B} \tau^{b}_V"']\arrow[dr, "\sqcup_Bp_b"]& & \\
     &Y^{\text{der}}_V\arrow[d, "\tau"]\arrow[r,"\widetilde{h}"]& \widetilde{X^a_V}\arrow[d,"\tau^a_V"]\\
      &Y\arrow[r,"h"]&X.
    \end{tikzcd}
\end{equation*}
Then 
\[\sum_{b\in B} h^b_{V*}\left(e_b\cdot \tau^{b*}_V\right)=\sum_{b\in B}\widetilde{h}_*p_{b*}\left(e_b\cdot p_b^*\tau^*\right)=\widetilde{h}_*\left(\left(\sum_{b\in B}p_{b*} e_b\right)\cdot \tau^*\right).\]
For $M$ a quasi-smooth scheme, denote by $1:=[\mathcal{O}_M]\in \text{gr}^0K_0(M).$
It suffices to show that 
\begin{equation}\label{sumone}
\sum_{b\in B}p_{b*}(e_b)=1\in \text{gr}^0K_0\left(Y^{\text{der}}_V\right).
\end{equation}

The underlying scheme $Y^{\text{cl}}_V$ has irreducible components indexed by $B$ and these componenets are birational to $\widetilde{Y^b_V}$. 
Recall the discussion in Subsection \ref{zerodiff}. There exist open sets 
\begin{align*}
    W=\bigsqcup_{b\in B}W^b&\subset Y^{\text{der}}_V,\\
    U^b&\subset \widetilde{Y^b_V}
\end{align*}
whose complements have codimension $\geq 1$ and such that for any $b\in B$:
\begin{align*}
W^{b,\text{cl}}&\cong U^b,\\
W^b\times_{Y^{\text{der}}_V}\widetilde{Y^b_V}&\cong 
    U^b,\\
    \mathcal{O}_{W^b}&\cong\mathcal{O}_{U^b}\left[\mathcal{E}^b[1];d\right],
\end{align*}
where $\mathcal{E}^b$ is a vector bundle on $U^b$ of rank $d_b$ and the differential $\mathcal{E}^b\to\mathcal{O}_{U^b}$ is zero. 
Let $\varepsilon_b\in \text{gr}^{d_b}K_0\left(U^b\right)$ be the Euler class of $\mathcal{E}^b$. Then $p_{b*}\left(\varepsilon_b\right)=1\in \text{gr}^0K_0(W^b)$ and the restriction map sends
\begin{align*}
    \text{res}: \text{gr}^{d_b}K_0\left(\widetilde{Y^b_V}\right)&\to \text{gr}^{d_b}K_0\left(U^b\right)\\
    e_b&\mapsto \varepsilon_b.
\end{align*}
Back to proving \eqref{sumone}, 
we have that $\text{gr}^0K_0\left(Y^{\text{der}}_V\right)\cong \bigoplus_{b\in B}\text{gr}^0K_0\left(W^b\right)$. Consider the diagram
\begin{equation*}
    \begin{tikzcd}
    \text{gr}^{d_b}K_0\left(\widetilde{Y^b_V}\right)\arrow[d,"p_{b*}"]\arrow[r,"\text{res}"]&\text{gr}^{d_b}K_0\left(U^b\right)\arrow[d,"p_{b*}"]\\
    \text{gr}^{0}K_0\left(Y^{\text{der}}_V\right)\arrow[r,"\text{res}"]& \text{gr}^0K_0(W^b),
    \end{tikzcd}
\end{equation*}
where the horizontal maps are restriction to open sets maps. Then
\[\text{res}\,p_{b*}(e_b)=
p_{b*}\left(\varepsilon_b\right)=1
\text{ in }\text{gr}^0K_0(W^b).\]
The diagram \eqref{eul} thus commutes.
The conclusion now follows from Propositions \ref{pushfor} and \ref{eulermult}. 
\end{proof}

\subsection{Towards the filtration $\P^{\leq i}_f$}\label{towa}

We continue with the notation from Subsection \ref{Kper}. Let $X$ be a smooth variety with a proper map $f:X\to S$. Let $T\xrightarrow{\pi}S$ be a generically finite map onto its image from $T$ smooth. 

We say that $\Gamma$ is a \textit{$(f, \pi)$-quasi-smooth scheme} if $\Gamma$ is a derived scheme with maps
\begin{equation*}
    \begin{tikzcd}
    &X'\arrow[d,"t"]\\
    \Gamma\arrow[d,"q"]\arrow[ur,hook, "\iota"]\arrow[r,"p"]&X\arrow[d,"f"]\\
    T\arrow[r, "\pi"]&S
    \end{tikzcd}
\end{equation*} such that $\iota$ is a quasi-smooth immersion in a smooth variety $X'$ (i.e. the cotangent complex $\mathbb{L}_\iota[-1]$ is a vector bundle on $\Gamma$), $t$ is smooth, and $q^{\text{cl}}$ is surjective. 
The conditions on the maps $\iota$ and $t$ imply that $\Gamma$ is quasi-smooth.
Let \[\text{gr}^\cdot K^q_{T\times_SX}(T\times X)\subset\text{gr}^\cdot K_{T\times_SX}(T\times X)\] be the subspace generated by classes $[\Gamma]$ for $(f, \pi)$-quasi-smooth schemes as above. 

\begin{prop}\label{quasi}
Let $h$ be a proper map:
\begin{equation*}
    \begin{tikzcd}[column sep=small]
    Y\arrow[dr,"g"']\arrow[rr,"h"] && X\arrow[dl,"f"]\\
    & S. &
    \end{tikzcd}
\end{equation*}
There are induced maps 
\[h_*:\text{gr}_\cdot K^q_{T\times_SY}(T\times Y)\to \text{gr}_\cdot K^q_{T\times_SX}(T\times X).\]
If $h$ is surjective, then there are induced maps
\[h^*: \text{gr}^\cdot K^q_{T\times_SX}(T\times X)\to \text{gr}^\cdot K^q_{T\times_SY}(T\times Y).\]
\end{prop}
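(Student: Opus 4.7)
The plan is to verify directly that the operations $\widetilde{h}_*$ and $\widetilde{h}^*$ send the distinguished generators of the $K^q$--subspaces, namely the classes $[\mathcal{O}_\Gamma]$ of $(g,\pi)$-- or $(f,\pi)$--quasi-smooth schemes, to classes of the same form on the other side. The two assertions split cleanly into a pushforward and a pullback argument, each reduced to exhibiting a new quasi-smooth embedding into a smooth ambient variety equipped with a smooth projection to the target of the correspondence.

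For pushforward, let $\Gamma$ be $(g,\pi)$--quasi-smooth with quasi-smooth immersion $\iota:\Gamma\hookrightarrow Y'$ and smooth $t:Y'\to Y$. I would keep the same derived scheme $\Gamma$ with the same $q:\Gamma\to T$ but replace $p_Y$ by $p_X:=h\circ p_Y$. The commutativity $fp_X=ghp_Y/h=gp_Y=\pi q$ is immediate and $q^{\text{cl}}$ is unchanged. To produce the required embedding I set $X':=Y'\times X$, which is smooth and admits the smooth second projection to $X$, and embed $\Gamma\hookrightarrow X'$ as the composite of $\iota$ with the graph of $h\circ t:Y'\to X$. Since the graph of a morphism between smooth varieties is a regular (hence quasi-smooth) closed immersion, the composite is a quasi-smooth immersion, so $\Gamma$ becomes $(f,\pi)$--quasi-smooth. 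On the K-theory side the identity $\widetilde{h}_*[\Gamma]_{T\times Y}=[\Gamma]_{T\times X}$ is just functoriality of proper pushforward applied to the factorization $\Gamma\to T\times Y\xrightarrow{\widetilde{h}}T\times X$, and the dimension of $\Gamma$ is preserved, so $\widetilde{h}_*$ respects $\text{gr}_\cdot$.

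For pullback with $h$ surjective, let $\Gamma$ be $(f,\pi)$--quasi-smooth with $\iota:\Gamma\hookrightarrow X'$ and smooth $t:X'\to X$. I form the derived fiber product $\Gamma':=\Gamma\times_X Y$, with projections $\rho:\Gamma'\to\Gamma$ and $p':\Gamma'\to Y$, and set $q':=q\circ\rho$. Base change of the smooth map $t$ along $Y\to X$ produces a smooth map $X'\times_X Y\to Y$ with $X'\times_X Y$ smooth (since $Y$ is smooth), and the base change of $\iota$ is a quasi-smooth immersion $\Gamma'\hookrightarrow X'\times_X Y$, exhibiting $\Gamma'$ as quasi-smooth in the required sense. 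Surjectivity of $h$ on topological spaces ensures that $\rho^{\text{cl}}:(\Gamma')^{\text{cl}}\to\Gamma^{\text{cl}}$ is surjective: given $\gamma\in\Gamma^{\text{cl}}$, choose any $y\in Y^{\text{cl}}$ with $h(y)=p(\gamma)$, so that the pair $(\gamma,y)$ lifts $\gamma$. Hence $(q')^{\text{cl}}=q^{\text{cl}}\circ\rho^{\text{cl}}$ is surjective. The K-theoretic identity $\widetilde{h}^*[\Gamma]=[\Gamma']$ follows from the derived base change formula for $L\widetilde{h}^*$ applied to $(p,q)_*\mathcal{O}_\Gamma$, which is legitimate because $\widetilde{h}$ is a morphism of smooth varieties, hence of finite Tor-dimension. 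Codimension is preserved by the standard dimension formula for quasi-smooth derived fiber products, so $\widetilde{h}^*$ is compatible with $\text{gr}^\cdot$.

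I expect the main obstacle to be keeping the derived base-change bookkeeping straight in the pullback case, in particular the point that $\widetilde{h}^*[\Gamma]$ is represented by the \emph{derived} fiber product $\Gamma'=\Gamma\times_X Y$ (not its classical truncation), and that the quasi-smoothness of $\iota':\Gamma'\hookrightarrow X'\times_X Y$ relies on this derived interpretation together with the stability of quasi-smooth immersions under base change recorded in Subsection \ref{qss}. The pushforward half is essentially a graph-of-a-morphism construction combined with functoriality of pushforward, and should require no derived base change.
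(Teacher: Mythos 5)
Your proposal is correct and follows essentially the same route as the paper: for pullback you take the derived fiber product $\Gamma'=\Gamma\times_X Y$, check that the base-changed immersion into $X'\times_X Y\to Y$ keeps it $(g,\pi)$-quasi-smooth, and use surjectivity of $h$ to get surjectivity of $(q')^{\mathrm{cl}}$; for pushforward you keep the same $\Gamma$ and supply a new factorization through a smooth ambient scheme over $X$. The only cosmetic difference is that you produce that factorization explicitly via the graph of $h\circ t$ into $Y'\times X$, while the paper appeals to the existence of a closed-immersion-then-smooth factorization for the proper map $Y'\to X$ of smooth quasi-projective varieties — these are the same idea.
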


\begin{proof}
We discuss the statement about pullback. Consider the diagram:
\begin{equation*}
    \begin{tikzcd}
    \Theta\arrow[d, "r"]\arrow[r, hook]&Y'\arrow[d,"h'"]\arrow[r,"t_Y"]&Y\arrow[d,"h"]\\
    \Gamma\arrow[d,"q"]\arrow[r, hook]&X'\arrow[r,"t_X"]&X\arrow[dl,"f"]\\
    T\arrow[r, "\pi"]&S,
    \end{tikzcd}
\end{equation*}
where $\Gamma$ is a $(f, \pi)$-quasi-smooth scheme with $q^{\text{cl}}$ is surjective, $t_X$ is smooth, and the upper squares are cartesian. Then
the map $\Theta\hookrightarrow Y'$ is a quasi-smooth immersion and $t_Y$ is smooth. 
The map $h$ is surjective, so $r^{\text{cl}}:\Theta^{\text{cl}}\to \Gamma^{\text{cl}}$ is surjective, and thus $(qr)^{\text{cl}}: \Theta^{\text{cl}}\to T$ is surjective as well, so $\Theta$ is a $(g,\pi)$-quasi-smooth scheme.

We next discuss the statement about pushforward. Consider 
\begin{equation*}
    \begin{tikzcd}
    &Y'\arrow[d,"t"]\\
    \Gamma\arrow[d,"q"]\arrow[ur,hook, "\iota"]\arrow[r,"p"]&Y\arrow[d,"g"]\\
    T\arrow[r]&S
    \end{tikzcd}
\end{equation*} such that $\iota$ is a closed immersion, $t$ is smooth, and $q^{\text{cl}}$ is surjective. The map $Y'\to X$ is a proper map of smooth quasi-projective varieties, so we can choose $X'$ with maps
\[Y'\xrightarrow{\iota'} X'\xrightarrow{t'}X\] such that $\iota'$ is a closed immersion and $t'$ is smooth. Then
\begin{equation*}
    \begin{tikzcd}
    &X'\arrow[d,"t'"]\\
    \Gamma\arrow[d,"q"]\arrow[ur,hook, "\iota'\iota"]\arrow[r,"hp"]&X\arrow[d,"f"]\\
    T\arrow[r]&S
    \end{tikzcd}
\end{equation*}
such that $\iota'\iota$ is a closed immersion, $t'$ is smooth, and $q^{\text{cl}}$ is surjective. 
\end{proof}

Consider a diagram 
\begin{equation}\label{prime}
    \begin{tikzcd}
    & X'\arrow[d,"t"]\\
    \Gamma\arrow[d,"q"]\arrow[r,"p"]\arrow[ru,hook,"\iota"]&X\arrow[d,"f"]\\
    T\arrow[r,"\pi"]&S
    \end{tikzcd}
\end{equation} 
as above, with $t$ a smooth map and with $\iota$ a closed immersion. Let \[T\times_SX=Z_1\cup Z_2,\] where $Z_1$ is the union of irreducible components of $T\times_SX$ dominant over $T$ and $Z_2$ is the union of the other irreducible components. Denote by $Z_1^o:=Z_1-\left(Z_1\cap Z_2\right)$. Similarly define $Z'_1$ and $Z'_2$ for $T\times_SX'$. 
Let $b=\text{reldim}\,q$ and $a=b+\dim T=\dim \Gamma$. 

\begin{prop}\label{reszero}
The class $[\Gamma]\in \text{gr}_aK_{T\times_SX'}(T\times X')$ is not supported on $Z'_2$.
\end{prop}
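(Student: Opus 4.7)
The strategy is to restrict along the open inclusion $U := (T \times X') \setminus Z'_2 \hookrightarrow T \times X'$, on which $T \times_S X'$ restricts to $Z_1'^o$, and invoke the localization exact sequence
\[
\text{gr}_a K_{Z'_2}(T \times X') \to \text{gr}_a K_{T \times_S X'}(T \times X') \to \text{gr}_a K_{Z_1'^o}(U) \to 0.
\]
The statement ``not supported on $Z'_2$'' is equivalent to the restriction of $[\Gamma]$ to $U$ being a nonzero class in $\text{gr}_a K_{Z_1'^o}(U)$, and this is what I would aim to prove.

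First I would observe that $(q, \iota) \colon \Gamma \to T \times X'$ is a closed immersion, factoring as the graph $\Gamma \hookrightarrow T \times \Gamma$ of $q$ followed by $\mathrm{id}_T \times \iota$, both closed immersions; its image lies in $T \times_S X'$ by commutativity of the outer square in \eqref{prime}. Since $q^{\mathrm{cl}} \colon \Gamma^{\mathrm{cl}} \to T$ is surjective onto the irreducible variety $T$, at least one irreducible component $\Gamma_0 \subset \Gamma^{\mathrm{cl}}$ must dominate $T$; its image $(q, \iota)(\Gamma_0)$ is then an irreducible subvariety of $T \times_S X'$ dominant over $T$, so it lies in a single component of $Z'_1$ and has generic point in $Z_1'^o$. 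Consequently $\Gamma|_U := \Gamma \times_{T \times X'} U$ is a nonempty open quasi-smooth derived subscheme of $\Gamma$, still of dimension $a$, closed in $U$ with image in $Z_1'^o$.

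The main obstacle is to show the non-vanishing $[\Gamma|_U] \neq 0$ in $\text{gr}_a K_{Z_1'^o}(U)$. I would pass through the cycle class map $\mathfrak{c} \colon \text{gr}_a K_{Z_1'^o}(U) \to H^{\mathrm{BM}}_{2a}(Z_1'^o)$ and identify $\mathfrak{c}([\Gamma|_U])$ with the virtual fundamental class of $\Gamma|_U$. Using the local structure of Subsection \ref{zerodiff}, on an open $W \subset \Gamma|_U$ of codimension $\geq 1$ there is an isomorphism $\mathcal{O}_W \cong \mathcal{O}_{W^{\mathrm{cl}}}[\mathcal{E}[1];0]$ with $\mathrm{rk}\,\mathcal{E} = \dim W^{\mathrm{cl}} - a$, so in $K$-theory $[\Gamma|_U]|_W = \lambda_{-1}(\mathcal{E}) \cdot [\mathcal{O}_{W^{\mathrm{cl}}}]$ and its cycle class equals $c_{\mathrm{top}}(\mathcal{E}) \cap [W^{\mathrm{cl}}]$. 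Since $q$ is quasi-smooth of relative dimension $b$ over the irreducible base $T$, its generic classical fibre has dimension $b$, so $\Gamma^{\mathrm{cl}}$ admits an irreducible component of dimension exactly $a = \dim T + b$ dominating $T$; taking $\Gamma_0$ to be such a component, $\Gamma$ is smooth at the generic point of $\Gamma_0$ (the bundle $\mathcal{E}$ has rank zero there), so the virtual class specializes to the honest fundamental class of the image $(q, \iota)(\Gamma_0) \subset Z_1'^o$, visibly nonzero in $H^{\mathrm{BM}}_{2a}$. This gives the desired non-vanishing and completes the argument.
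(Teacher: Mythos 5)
Your route is genuinely different from the paper's. The paper detects non-support directly in $K$-theory by a degree argument: taking $\ell$ an $ft$-ample divisor, it computes $\text{pr}_{1*}([\Gamma]\cdot\ell^b)=d[T]\in\text{gr}_{\dim T}K_\cdot(T)$ where $d$ is the intersection number $[\Gamma_\eta]\cdot\ell^b$ in $X'_\eta$ over the generic point $\eta$ of $T$ (asserted to be a nonzero integer), while for any class $x$ supported on $Z'_2$ the same pushforward vanishes since the support of $x\cdot\ell^b$ does not dominate $T$. Your argument instead localizes to the complement of $Z'_2$ and invokes the cycle map to Borel--Moore homology together with the local structure of quasi-smooth schemes from Subsection \ref{zerodiff}. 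The paper's test is lighter: one numerical pushforward to $T$ settles both sides at once, with no appeal to the cycle map or to the structure theory of the classical truncation.

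More importantly, there is a gap at the core of your non-vanishing step. You claim that quasi-smoothness of $q$ of relative dimension $b$ over the irreducible base $T$ forces the generic classical fiber to have dimension $b$, and hence that $\Gamma^{\text{cl}}$ has a component $\Gamma_0$ of dimension exactly $a$ on which the bundle $\mathcal{E}$ has rank zero. This does not follow from quasi-smoothness: a quasi-smooth morphism can have every classical fiber of dimension strictly greater than the virtual relative dimension (for instance the derived zero locus of the zero section of a trivial bundle). In that excess-dimensional situation $\mathcal{E}$ has positive rank on every component of $\Gamma^{\text{cl}}$, your identification of the virtual class with an honest fundamental class fails, and the cycle class in $H^{\text{BM}}_{2a}$ may even vanish when $c_{\text{top}}(\mathcal{E})=0$. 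So the chain of reasoning does not establish the required non-vanishing. To close the gap you would either need a hypothesis ensuring the generic fiber attains the expected dimension, or you would need to replace this step with a genuine non-vanishing argument such as the paper's intersection-number computation over the generic point of $T$.
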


\begin{proof}
Let $\ell$ be an $ft$-ample divisor. Denote by $\text{pr}_1:T\times X'\to T$.
Then
\begin{equation}\label{inte}
\text{pr}_{1*}\left([\Gamma]\cdot \ell^b\right)=d[T]\in \text{gr}_{\dim T}K_\cdot(T)
\end{equation}
for $d$ a non-zero integer. Indeed, let $\eta$ be the generic point of $T$. By abuse of notation, we denote by $\eta$ its image in $S$.
It suffices to show \eqref{inte} after restricting to $\eta$. In this case, $d$ is the intersection number $\left[\Gamma_\eta\right]\cdot\ell^b$ in $X'_{\eta}$.

Further, let $x\in \text{gr}_aK_{Z'_2}(T\times X')$. We have that \[\text{pr}_{1*}\left(x\cdot \ell^b\right)=0\in \text{gr}_{\dim T} K_\cdot(T)\] because the support on $x\cdot \ell^b$ is not dominant over $T$. The conclusion thus follows.
\end{proof}

\begin{prop}\label{lb}
Let $T\xrightarrow{\pi}X$ be a generically finite map from $T$ smooth with image $V$. 
Let $a>\dim X_V$. Then $\text{gr}_aK^q_{T\times_SX}(T\times X)=0$. Further, $\text{gr}_{\dim X_V}K^q_{T\times_SX}(T\times X)$ is generated by irreducible components of $T\times_SX$ dominant over $T$ of dimension $X_V$.
\end{prop}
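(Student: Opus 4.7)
The plan is to combine Proposition \ref{reszero} (applied in the special case $X'=X$, $t=\mathrm{id}$, so that any $(f,\pi)$-quasi-smooth $\Gamma$ embeds as a quasi-smooth closed subscheme of the smooth variety $T\times X$) with the localization exact sequence for the closed inclusion $Z_2\hookrightarrow T\times_S X$. The decisive dimensional input is that $\dim Z_1=\dim X_V$ while $Z_2$ is non-dominant over $T$.

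For the vanishing in part 1, take a generator $[\Gamma]\in\text{gr}_a K^q$ with $\dim\Gamma=a>\dim X_V$. Under the identification $K_{T\times_S X}(T\times X)=G(T\times_S X)$ coming from smoothness of $T\times X$, the localization sequence
\[
\text{gr}_a G(Z_2)\lra\text{gr}_a G(T\times_S X)\lra\text{gr}_a G\bigl((T\times_S X)\setminus Z_2\bigr)\lra 0
\]
has its rightmost term zero, since $(T\times_S X)\setminus Z_2$ is open in $Z_1$ and therefore of dimension at most $\dim X_V<a$. Hence every class in $\text{gr}_a K_{T\times_S X}(T\times X)$ lies in the image of $\text{gr}_a G(Z_2)$. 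Proposition \ref{reszero} says precisely that $[\Gamma]$ is \emph{not} in that image, and combining the two forces $[\Gamma]=0$.

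For part 2, I first produce each $[Z_1^a]$ as an element of $K^q$. Choose a resolution of singularities $\widetilde{Z_1^a}\to Z_1^a$, set $X':=\widetilde{Z_1^a}\times X$ with smooth projection $t:X'\to X$, and use the graph closed embedding $\iota:\widetilde{Z_1^a}\hookrightarrow X'$, which is quasi-smooth because both source and target are smooth. Surjectivity of $q^{\mathrm{cl}}$ follows from properness of $f$ together with the dominance of $Z_1^a$ over $T$, and birationality of the resolution identifies the pushforward of $[\mathcal{O}_{\widetilde{Z_1^a}}]$ to $T\times X$ with $[Z_1^a]$ in $\text{gr}_{\dim X_V}K(T\times X)$. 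For the other direction, the same localization sequence at level $a=\dim X_V$ yields
\[
\text{gr}_{\dim X_V}K_{T\times_S X}(T\times X)\big/\text{gr}_{\dim X_V}K_{Z_2}(T\times X)\cong\bigoplus_a\mathbb{Z}\,[Z_1^a],
\]
so any $[\Gamma]\in\text{gr}_{\dim X_V}K^q$ differs from a combination $\sum_a m_a[Z_1^a]$ by a class supported on $Z_2$.

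The main obstacle is controlling that residual $Z_2$-correction inside $K^q$ itself, i.e.\ showing $K^q\cap\text{gr}_{\dim X_V}K_{Z_2}(T\times X)$ is already captured by the $[Z_1^a]$'s. My plan is to analyse reducible $\Gamma$'s componentwise, using the description of $\mathcal{O}_\Gamma$ on a dense open from Subsection \ref{zerodiff} to read off the $\text{gr}_{\dim X_V}$-contribution of each irreducible piece of $\Gamma^{\mathrm{cl}}$ as the top Chern class of the correction bundle $\mathcal{E}$ times the fundamental class. Applying Proposition \ref{reszero} to each dominant piece separately, and exploiting the $q^{\mathrm{cl}}$-surjectivity condition across the full reducible decomposition to force any non-dominant piece into strictly smaller virtual dimension, should eliminate any isolated top-dimensional $Z_2$-contribution. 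Making this last step fully precise is the technical heart of the proof.
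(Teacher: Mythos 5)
The central gap is your opening reduction: you claim Proposition \ref{reszero} can be ``applied in the special case $X'=X$, $t=\mathrm{id}$, so that any $(f,\pi)$-quasi-smooth $\Gamma$ embeds as a quasi-smooth closed subscheme of $T\times X$.'' This is false. By the definition in Subsection \ref{towa}, a $(f,\pi)$-quasi-smooth scheme is only required to admit a quasi-smooth closed immersion $\iota:\Gamma\hookrightarrow X'$ into \emph{some} smooth variety $X'$ equipped with a smooth map $t:X'\to X$; the induced map $\Gamma\to T\times X$ is in general a proper map that is not a closed (let alone quasi-smooth) immersion, and the class $[\Gamma]\in\text{gr}_aK^q_{T\times_SX}(T\times X)$ is the proper pushforward $\widetilde p_*[\mathcal O_\Gamma]$, not the fundamental class of a quasi-smooth subscheme of $T\times X$. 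Because of this, Proposition \ref{reszero} — which is proved in the auxiliary space $T\times X'$, about the loci $Z'_1,Z'_2\subset T\times_SX'$ — does not directly say that $\widetilde p_*[\Gamma]$ is unsupported on $Z_2\subset T\times_SX$. Your localization sequence correctly shows that for $a>\dim X_V$ every class in $\text{gr}_aG(T\times_SX)$ lifts from $Z_2$; but the statement you then invoke, ``$\widetilde p_*[\Gamma]$ is not in the image of $\text{gr}_aG(Z_2)$,'' is the $T\times X$-analogue of Proposition \ref{reszero}, not the proposition itself, and nothing in your argument bridges the two. The paper supplies exactly this bridge by choosing a section $s:X\to X'$ of $t$ and transferring the nonvanishing of the restriction from $T\times X'\setminus Z'_2$ back to $T\times X\setminus Z_2$ via a commuting square involving $\widetilde s_*$; this step is entirely absent from your proposal.

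For part 2, your construction of the classes $[Z^a_1]$ inside $K^q$ (via a resolution of $Z^a_1$ and the graph embedding into $\widetilde{Z^a_1}\times X$) is sound, and the localization computation modulo $Z_2$-supported classes is also fine. But the residual step you flag as ``the technical heart'' is precisely the same $T\times X'$-versus-$T\times X$ issue as in part 1, and without the section trick (or an equivalent device) the componentwise analysis via Subsection \ref{zerodiff} that you sketch does not close the argument. In the paper both halves of the proposition are handled by the single observation that, after transporting via $\widetilde s$, the restriction of $\widetilde p_*[\Gamma]$ to $T\times X\setminus Z_2$ is nonzero whenever $\widetilde p_*[\Gamma]\neq 0$; combined with $\dim Z^o_1=\dim X_V$ this forces both the vanishing for $a>\dim X_V$ and the generation statement at $a=\dim X_V$.
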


\begin{proof}
Suppose we are in the setting of \eqref{prime} and let $s:X\to X'$ be a section of $t$. We write $\widetilde{p}:\Gamma\to T\times X$, $\widetilde{\iota}:\Gamma\to T\times X'$ etc.
Assume that \[\widetilde{t}_*\widetilde{\iota}_*[\Gamma]=\widetilde{p}_*[\Gamma]\neq 0\in \text{gr}_aK^q_{T\times_SX}(T\times X).\] Then there exists a non-zero $x\in \text{gr}_aK^q_{T\times_SX}(T\times X)$ such that \[\widetilde{p}_*[\Gamma]=\widetilde{s}_*(x)\in \text{gr}_aK^q_{T\times_SX'}(T\times X').\] 
Consider the diagram
\begin{equation*}
    \begin{tikzcd}
    \text{gr}_aK_{T\times_SX'}(T\times X')\arrow[r,"\text{res}"]& \text{gr}_aK_{Z'^o_1}(T\times X'\setminus Z'_2)\\
    \text{gr}_aK_{T\times_SX}(T\times X)\arrow[r,"\text{res}"]\arrow[u,"\widetilde{s}_*"]& \text{gr}_aK_{Z_1^o}(T\times X\setminus Z_2).\arrow[u,"\widetilde{s}_*"]
    \end{tikzcd}
\end{equation*}
By Proposition \ref{reszero}, we have that $\text{res}(x)\neq 0\in \text{gr}_aK_{Z_1^o}(T\times X\setminus Z_2)$. We have that $\dim Z_1^o=\dim X_V$, and the conclusion follows from here.
\end{proof}

\subsection{The perverse filtration $\P^{\leq i}_f$}
\label{Kper2}
We now define a smaller filtration $\P^{\leq i}_f\subset P^{\leq i}_f$.
We use the notation from Subsection \ref{Kper}. 

Let $X$ be a smooth variety with a proper map $f:X\to S$ and let $T\xrightarrow{\pi}S$ be a generically finite map onto its image from $T$ smooth. Consider a subvariety $Y\hookrightarrow X$. 
Define the subspaces of $\text{gr}^\cdot G_Y(X)$:  
\begin{align*}
\P'^{\leq i}_{f,T}&:=\text{span}_\Gamma\left(\Phi_\Gamma: \text{gr}^\cdot K_\cdot(T)\to \text{gr}^\cdot G_Y(X)\right)\\
\P'^{\leq i}_{f, V}&:=\text{span}\left(\P'^{\leq i}_{f,T}\text{ for all maps }\pi\text{ as above }V\right),\end{align*}
where $\Gamma\in \text{gr}_{\dim X-s}K^q_{T\times_S Y, 0}(T\times X)$
and 
\begin{equation*}
    \left\lfloor \frac{i+\dim X-\dim T}{2}\right\rfloor\geq s.
\end{equation*}
Using the notation from Subsection \ref{filtPP}, define 
\[\P^{\leq i}_f\text{gr}^\cdot G_Y(X):=\bigcap_{V\subsetneqq S}\bigcap_{a\in A_V}\text{ker}\left(\tau^{a*}_V:\P'^{\leq i}_f\text{gr}^\cdot G_Y(X)\to P'^{> i+c^a_V}_{\widetilde{f^a_V}}\text{gr}^\cdot K_\cdot\left(\widetilde{X^a_V}\right)\right).\]
The definition is independent of the resolutions $\widetilde{X^a_V}$ chosen, see Subsection \ref{filtPP}.

\begin{thm}\label{functorP}
Let $X$ and $Y$ be smooth varieties with $c=\dim X-\dim Y$. 
Consider proper maps
\begin{equation*}
    \begin{tikzcd}[column sep=small]
    Y\arrow[dr,"g"']\arrow[rr,"h"] && X\arrow[dl,"f"]\\
    & S. &
    \end{tikzcd}
\end{equation*}
There are induced maps
\begin{align*}
h_*&: \P'^{\leq i-c}_{g}\text{gr}_\cdot K_\cdot(Y)\to \P'^{\leq i}_{f}\text{gr}_\cdot K_\cdot(X)\\
h_*&: \P^{\leq i-c}_{g}\text{gr}_\cdot K_\cdot(Y)\to \P^{\leq i}_{f}\text{gr}_\cdot K_\cdot(X).
\end{align*}
If $h$ is surjective, then there are induced maps
\begin{align*}
h^*&: \P'^{\leq i-c}_{f}\text{gr}^\cdot K_\cdot(X)\to \P'^{\leq i}_{g}\text{gr}^\cdot K_\cdot(Y)\\
h^*&: \P^{\leq i-c}_{f}\text{gr}^\cdot K_\cdot(X)\to \P^{\leq i}_{g}\text{gr}^\cdot K_\cdot(Y).
\end{align*}
\end{thm}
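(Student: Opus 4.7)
The plan is to run the arguments of Propositions \ref{pushfor} and \ref{pullback} essentially verbatim, inserting one new ingredient, Proposition \ref{quasi}, to obtain the statement for $\P'$, and then to deduce the statement for $\P$ formally from the $\P'$-case together with Theorem \ref{functorialityP}.

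First I would handle $\P'$. Fix a generically finite $\pi:T\to S$ from smooth $T$. For pushforward, given $\Gamma\in\text{gr}_{\dim Y-s}K^q_{T\times_S Y,0}(T\times Y)$ representing an element of $\P'^{\leq i-c}_{g,T}$, the identity $h_*\Phi_\Gamma=\Phi_{\widetilde{h}_*\Gamma}$ from the proof of Proposition \ref{pushfor}, combined with the pushforward part of Proposition \ref{quasi}, places $h_*\Phi_\Gamma$ in $\P'^{\leq i}_{f,T}$; the dimension bound rewrites correctly using $c=\dim X-\dim Y$, exactly as in the proof of Proposition \ref{pushfor}. Symmetrically, when $h$ is surjective, the identity $h^*\Phi_\Theta=\Phi_{\widetilde{h}^*\Theta}$ from the proof of Proposition \ref{pullback}, combined with the surjective-pullback part of Proposition \ref{quasi} (applied to $\widetilde{h}=\id_T\times h$, which remains surjective), gives the analogous pullback statement.

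For the $\P$-statements, I would observe that directly from the definitions one has $\P^{\leq i}_f=\P'^{\leq i}_f\cap P^{\leq i}_f$: for $y\in\P'^{\leq i}_f\subseteq P'^{\leq i}_f$, the kernel condition $\tau^{a*}_V(y)\in P'^{\leq i+c^a_V}_{\widetilde{f^a_V}}$ defining $\P$ is equivalent to $y\in P^{\leq i}_f$. Given $x\in\P^{\leq i-c}_g$, the $\P'$-case established above gives $h_*(x)\in\P'^{\leq i}_f$, and Theorem \ref{functorialityP} applied to $x\in P^{\leq i-c}_g$ gives $h_*(x)\in P^{\leq i}_f$, whence $h_*(x)\in\P^{\leq i}_f$. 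The same formal argument handles $h^*$ when $h$ is surjective.

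The main obstacle is concentrated in the $\P'$-case and reduces entirely to Proposition \ref{quasi}. Once one has checked that the quasi-smooth correspondence subspaces are preserved by $\widetilde{h}_*$ and by $\widetilde{h}^*$ (for surjective $h$), the rest is essentially formal: the $\P'$-statement follows from the correspondence-level identities in Propositions \ref{pushfor} and \ref{pullback}, and the $\P$-statement reduces to $\P'$-functoriality plus $P$-functoriality via the identity $\P^{\leq i}_f=\P'^{\leq i}_f\cap P^{\leq i}_f$. No genuinely new input enters beyond Proposition \ref{quasi}.
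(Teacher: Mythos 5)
Your proof is correct and follows the paper's own strategy of combining Propositions \ref{pullback}, \ref{pushfor}, and \ref{quasi} with Theorem \ref{functorialityP}. The identity $\P^{\leq i}_f=\P'^{\leq i}_f\cap P^{\leq i}_f$ you use to deduce the $\P$-case from the $\P'$-case and $P$-functoriality is correct (both kernel conditions test $\tau^{a*}_V(y)\in P'^{\leq i+c^a_V}_{\widetilde{f^a_V}}$, and $\P'^{\leq i}_f\subset P'^{\leq i}_f$) and is simply a cleaner way to organize what the paper leaves implicit.
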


\begin{proof}
The functoriality follow as in Propositions \ref{pullback}, \ref{pushfor}, and Theorem \ref{functorialityP}, using Proposition \ref{quasi}.
\end{proof}

\subsection{Properties of the perverse filtration}
Consider a proper map $f:X\to S$ with $X$ smooth. Define the defect of semismallness of $f$ by 
\[s:=s(f)=\dim X\times_S X-\dim X.\]
Further, define $s'=\text{max}\left(\dim X+\dim S-4, \dim X\right)$.
The perverse filtration in cohomology satisfies
\[{}^pH^{\leq -s-1}_f(X)=0\text{ and }{}^pH^{\leq s}_f(X)=H^\cdot(X),\]
see \cite[Section 1.6]{dCM2}.
We prove an analogous result in $K$-theory:

\begin{thm}\label{sd}
For $f$ as above,
\begin{align*}
&P^{\leq -s'-1}_f\text{gr}^\cdot K_\cdot(X)=\P^{\leq -s-1}_f\text{gr}^\cdot K_\cdot(X)=0\\
&P^{\leq s}_f\text{gr}^\cdot K_0(X)=\P^{\leq s}_f\text{gr}^\cdot K_0(X)=\text{gr}^\cdot K_0(X).\end{align*}
\end{thm}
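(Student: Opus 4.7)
The plan is to decompose the theorem into the lower vanishing $\P^{\leq -s-1}_f = P^{\leq -s'-1}_f = 0$ and the generation statement $P^{\leq s}_f \text{gr}^\cdot K_0(X) = \P^{\leq s}_f \text{gr}^\cdot K_0(X) = \text{gr}^\cdot K_0(X)$, and to handle both via a correspondence-level dimension count combined with an induction on $\dim S$, with the kernel conditions in the definitions of $P^{\leq i}_f$ and $\P^{\leq i}_f$ (inside $P'^{\leq i}_f$) doing the work in the inductive step.

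For the vanishing, I would start with a correspondence $\Gamma \subset T \times X$ supported on $T \times_S X$ of dimension $\dim X - s(\Gamma)$, where $\pi : T \to S$ is generically finite onto $V := \pi(T)$, and unwind three geometric constraints: the filtration range $2s(\Gamma) \leq i + \dim X - \dim V$; the support bound $s(\Gamma) \geq \dim X - \dim X_V$ coming from $\dim T \times_S X = \dim X_V$; and, in the quasi-smooth setting governing $\P^{\leq \cdot}$, the surjectivity $\Gamma^{\mathrm{cl}} \twoheadrightarrow T$ built into the definition of an $(f, \pi)$-quasi-smooth scheme (Subsection \ref{towa}), which forces $s(\Gamma) \leq \dim X - \dim V$. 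Combined with the defect-of-semismallness estimate $\dim X_V \leq \dim V + (\dim X + s)/2$, these yield $i \geq -s - \dim V$, which is exactly $i \geq -s$ when $V$ is a point; this handles the base case of an induction on $\dim S$. For positive-dimensional $V$, I would restrict via $\tau^{a*}_V \Phi_\Gamma$ to each component resolution $\widetilde{X^a_V}$, apply the inductive hypothesis to $\widetilde{f^a_V} : \widetilde{X^a_V} \to V$ (whose base has smaller dimension), and use a defect-shift inequality $s(\widetilde{f^a_V}) \leq s(f) - c^a_V$ to guarantee $i + c^a_V \leq -s(\widetilde{f^a_V}) - 1$, so that the kernel condition forces the pullback and hence $\Phi_\Gamma$ to vanish. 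The weaker bound $-s' - 1$ for $P^{\leq \cdot}$ arises from dropping the surjectivity constraint $\Gamma^{\mathrm{cl}} \twoheadrightarrow T$: the dimension count then degrades by an amount controlled by $\dim S$, and a short case analysis in low-dimensional $S$ produces exactly the form $s' = \max(\dim X + \dim S - 4, \dim X)$.

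For the generation statement, I would show $[\mathcal{O}_Z] \in P^{\leq s}_f \cap \P^{\leq s}_f$ for every irreducible subvariety $Z \hookrightarrow X$ via suitably chosen correspondences, depending on the geometry of $f|_Z$. The simplest case is when $f$ is generically finite onto its image (equivalently $\dim X = \dim f(X)$, automatic when $f$ is semismall): take $T = X$ with $\pi = f$ and $\Gamma = \Delta_X \subset X \times_S X$, so $\Phi_\Gamma$ is the identity and every class lies in $P^{\leq 0} \subset P^{\leq s}$. For $Z$ contained in a fiber of maximal dimension $M$, take $T = \mathrm{pt}$ and $\Gamma$ an enclosing fiber, with bound $\dim X - 2M = -s$. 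For the intermediate cases I would pass to a fiber-product construction $\Gamma := T \times_W Z$ inside $T \times_S X$, where $T$ is a smooth variety generically finite over a judiciously chosen $W \supset f(Z)$, producing $\Phi_\Gamma(1) = [\mathcal{O}_Z]$ modulo lower codimension and multiplication by $\deg(T \to W)$. The filtration bound $i \geq \dim X + \dim W - 2 \dim Z$ is driven down to $\leq s$ by the same defect inequality, carefully choosing $\dim W$ in the feasible window. These $\Gamma$ are $(f, \pi)$-quasi-smooth as fiber products of smooth varieties over smooth, with $\Gamma^{\mathrm{cl}} \twoheadrightarrow T$ by construction, so the $\P$-case follows simultaneously. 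The kernel conditions at $i = s$ are automatic: by the inductive generation statement applied to each $\widetilde{f^a_{V'}} : \widetilde{X^a_{V'}} \to V'$ with defect $s(\widetilde{f^a_{V'}}) \leq s - c^a_{V'}$, the target $P'^{\leq s + c^a_{V'}}_{\widetilde{f^a_{V'}}}$ is already all of $\text{gr}^\cdot K_\cdot(\widetilde{X^a_{V'}})$, so the pullbacks lie in the required subspace for free.

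The principal technical obstacle I expect is the defect-shift inequality $s(\widetilde{f^a_V}) \leq s(f) - c^a_V$ underlying both inductions: this is a genuine comparison between $\dim \widetilde{X^a_V} \times_V \widetilde{X^a_V}$ and $\dim X \times_S X$ that does not follow formally from semicontinuity of fiber dimensions, and I would isolate it as a separate lemma. A secondary difficulty will be the generation step for $Z$ sitting deeply in small fibers over high-dimensional strata, where no single fiber-product correspondence has the correct dimension; in those cases one must combine several correspondences via the excess-intersection formula \eqref{eul} recorded earlier in the paper, tracking carefully how the kernel conditions interact with the additivity of the filtration.
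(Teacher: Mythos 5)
Your overall plan — a dimension count driven by Proposition \ref{lb} for the lower vanishing of $\P^{\leq\cdot}_f$, explicit correspondences (diagonal, hyperplane-type) for the upper generation, and the kernel conditions handled via the subvariety resolutions — is in the right spirit, and the claim that the kernel conditions at level $i=s$ are automatic because $\text{reldim}(\widetilde{X^a_V}\to V)\leq s+c^a_V$ forces $P'^{>s+c^a_V}_{\widetilde{f^a_V}}=0$ is exactly the paper's observation. However there is a genuine gap in the lower-vanishing argument, and it is located precisely where you flagged a possible difficulty.

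The \emph{defect-shift inequality} $s(\widetilde{f^a_V})\leq s(f)-c^a_V$, which you isolate as the key lemma underlying the induction, is false in general. For $f:\text{Bl}_0\mathbb{A}^2\to\mathbb{A}^2$ and $V=\{0\}$ one has $s(f)=0$, $c^a_V=1$, $X_V=E\cong\mathbb{P}^1$, and $s(E\to\text{pt})=\dim(E\times E)-\dim E=1$, so the inequality would assert $1\leq -1$. The inductive step you build on it therefore does not go through. Fortunately no induction is needed: the correct semismallness estimate is $2\dim X_V-\dim V\leq\dim(X_V\times_V X_V)\leq\dim(X\times_S X)=\dim X+s$ (your version, $\dim X_V\leq\dim V+(\dim X+s)/2$, has an extra $\dim V/2$ and is what forces you into the induction). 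Feeding the correct estimate and the quasi-smooth lower bound $b\geq\dim X-\dim X_V$ from Proposition \ref{lb} into the filtration condition $\lfloor(i+\dim X-\dim T)/2\rfloor\geq b$ gives $i\geq-s$ directly, for \emph{every} $V$, showing $\P'^{\leq -s-1}_f=0$ and hence $\P^{\leq -s-1}_f=0$ with no restriction step and no appeal to the kernel conditions. The $P^{\leq -s'-1}_f$ vanishing is likewise a direct count, using $\dim(T\times_S X)\leq\max(\dim X,\dim X+\dim T-2)$ in place of Proposition \ref{lb}; your explanation that the degraded bound ``arises from dropping the surjectivity constraint'' is in the right direction but leaves the constant $s'=\max(\dim X+\dim S-4,\dim X)$ unjustified. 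Note also that the kernel check you run at $i=s$ needs only the relative-dimension bound $\text{reldim}(X^a_V\to V)\leq s+c^a_V$, equivalent to $2\dim X^a_V-\dim V\leq\dim(X\times_SX)$, which \emph{is} true by the fiber-product inclusion — this is a bound on relative dimension, not on the defect of semismallness, and the distinction is what makes the paper's argument go through where your defect-shift version fails. Finally, the generation step should be separated out as a standalone statement ($\P'^{\leq d}_f\text{gr}^\cdot K_0(X)=\text{gr}^\cdot K_0(X)$ for surjective $f$ of relative dimension $d$), proved by induction on $d$ with a hyperplane lemma; your inline case analysis gestures at this but the ``intermediate cases'' and the interaction with the excess-intersection formula would need to be made precise.
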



\begin{prop}\label{ppp}
Let $f:X\to S$ be a surjective map from $X$ smooth with $\text{reldim}\,f>0$
and consider a subvariety $Z\hookrightarrow X$ of codimension $\geq 2$. Then there exists a subvariety $\iota: Y\hookrightarrow X$ of codimension $1$ such that $Z\subset Y$ and $f\iota:Y\to S$ is surjective.
\end{prop}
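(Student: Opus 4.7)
The strategy is to construct $Y$ as a general divisor in the linear system of sections of a high power of an ample line bundle that vanish along $Z$. The first observation is that every fiber of $f$ is positive-dimensional: since $f$ is dominant with $\text{reldim}\,f\geq 1$, Chevalley's theorem on fiber dimensions of dominant morphisms of irreducible varieties gives $\dim X_s\geq \text{reldim}\,f\geq 1$ for every $s\in S$, and surjectivity of $f$ guarantees every such fiber is non-empty.

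Assuming $X$ is quasi-projective (as is the operative setting of the paper), I fix a very ample line bundle $L$ on $X$ and consider, for $n\gg 0$, the linear system $|L^{\otimes n}\otimes I_Z|$ of sections of $L^{\otimes n}$ vanishing along $Z$. For $n$ large the sheaf $L^{\otimes n}\otimes I_Z$ is globally generated away from $Z$, so the base locus of this linear system is set-theoretically contained in $Z$, hence of codimension $\geq 2$ in $X$. To extract an irreducible member I plan to pass to the blow-up $\pi\colon \mathrm{Bl}_Z X\to X$ (composed with a resolution of singularities if $Z$ is singular). Pullback identifies $H^0(X, L^{\otimes n}\otimes I_Z)$ with $H^0(\mathrm{Bl}_Z X, \pi^*L^{\otimes n}\otimes \mathcal{O}(-E))$, where $E$ is the exceptional Cartier divisor, and for $n$ large this latter line bundle is very ample. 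Classical Bertini on $\mathrm{Bl}_Z X$ then yields an irreducible reduced general member $\widetilde Y$; its image $Y=\pi(\widetilde Y)\subset X$ is an irreducible divisor containing $Z$ (irreducibility is preserved because $V(\sigma)\setminus Z$ is dense in $V(\sigma)$ and agrees with $\widetilde Y\setminus E$, which is irreducible).

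It remains to check that $f|_Y\colon Y\to S$ is surjective. Fix $s\in S$; since $f$ is proper, the fiber $X_s$ is a positive-dimensional projective variety and $L^{\otimes n}|_{X_s}$ is ample. The section $\sigma\in H^0(X, L^{\otimes n})$ cutting out $Y$ restricts to a section of $L^{\otimes n}|_{X_s}$: either this restriction is identically zero, in which case $X_s\subset Y$, or it is non-zero, in which case its vanishing scheme is a non-empty ample divisor on the positive-dimensional projective variety $X_s$. In either alternative $Y\cap X_s\neq\emptyset$, so $f(Y)=S$. The main obstacle is the irreducibility step, since $|L^{\otimes n}\otimes I_Z|$ has base points along $Z$ and Bertini cannot be applied directly on $X$; passing to the modification $\mathrm{Bl}_Z X$ where the system becomes basepoint-free is the cleanest way to handle this.
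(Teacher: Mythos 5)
Your proof takes a genuinely different route from the paper's, and I think it is the cleaner of the two. The paper argues locally: it localizes at the generic point of $Z$, presents $Z$ as a (local) complete intersection $H_1\cap\cdots\cap H_r$ of hypersurfaces with $r\geq 2$, assumes for contradiction that none of the $f|_{H_i}$ is surjective, and derives a contradiction from a dimension count involving $\bigcap_i f(H_i)$ and the relative dimension of $f$. Your argument is global and constructive: Bertini on $\mathrm{Bl}_Z X$ produces an explicit irreducible divisor $Y=V(\sigma)$ through $Z$, and surjectivity of $f|_Y$ follows because a non-zero section of an ample line bundle on a positive-dimensional projective fiber must vanish somewhere. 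The irreducibility bookkeeping (passing to the blow-up to kill the base locus, then noting $V(\sigma)=\overline{V(\sigma)\setminus Z}=\pi(\widetilde Y)$) is handled correctly.

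The one thing worth flagging is that your argument uses two hypotheses not literally present in the statement: that $X$ is quasi-projective and that $f$ is proper. Quasi-projectivity is consistent with the paper's standing conventions and is needed in both approaches as soon as one wants an actual hypersurface. Properness, however, is doing real work in your fiberwise step: without it the fibers $X_s$ need not be complete, and a nonvanishing section of an ample line bundle on a non-complete positive-dimensional variety is perfectly possible, so $Y\cap X_s$ could be empty. The paper's statement omits properness, but the sole application (Proposition~\ref{reldim}) has $f$ proper, and the paper's own proof is also implicitly using that image of an irreducible component is closed (so ``dominant'' gives ``surjective''). So you are assuming exactly what the paper assumes in practice, but it would be worth stating the properness hypothesis explicitly at the top of the argument rather than invoking it mid-stream.
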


\begin{proof}
It suffices to pass to the generic point of $Z$, and we can thus assume that $Z$ is a point and is given by a complete intersection of smooth hypersurfaces $H_1,\cdots, H_r$ in $X$ with $r\geq 2$. Localizing at the generic point of $Z$, we can assume that $Z$ is a point. Further restricting to an open set of $X$, we can assume that the fibers of $f$ are irreducible. Assume that none of the maps \[f_i: H_i\to Z\] are surjective. Let $S_i$ be the image of $f_i$. 
Let $S':=\bigcap_{i=1}^rS_i$. Then $S'$ is not empty because it contains $f(Z)$. We have $\pi^{-1}(S_i)=H_i$ and so $\bigcap_{i=1}^r H_i$ contains $\pi^{-1}\left(S'\right)$. This means that $\dim \big(\bigcap_{i=1}^r H_i\big)\geq \text{reldim}\,f$. This bound contradicts that $\bigcap_{i=1}^r H_i$ is a point $Z$.
\end{proof}

\begin{prop}\label{reldim}
Let $f:X\to S$ be a proper surjective map from $X$ smooth of relative dimension $d$. 
Then
\[\P'^{\leq d}_{f}\text{gr}^\cdot K_0(X)=\text{gr}^\cdot K_0(X).\]
\end{prop}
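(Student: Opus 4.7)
The plan is to prove this by a double induction: an outer induction on the relative dimension $d$ of $f$, and, within each step, an inner Noetherian induction on the dimension of support. It suffices to show that every class $[\mathcal{O}_Z] \in \text{gr}^\cdot K_0(X)$, for $Z\subseteq X$ a subvariety, lies in $\P'^{\leq d}_f$, as such classes generate $\text{gr}^\cdot K_0(X)$.

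For the outer base case $d = 0$, the map $f$ is generically finite. I would take $T = X$, $\pi = f$, $X' = X\times X$, $t = \mathrm{pr}_2$, and $\Gamma = \Delta_X \hookrightarrow X \times X$, the diagonal: this is a closed immersion of smooth schemes, hence a quasi-smooth immersion, producing an $(f,f)$-quasi-smooth correspondence of dimension $\dim X$ with shift parameter $s = 0$. The dimension constraint $\lfloor 0/2\rfloor = 0 \geq 0$ is trivial, and one computes $\Phi_\Gamma = \mathrm{id}$ on $\text{gr}^\cdot K_0(X)$, so $\P'^{\leq 0}_f = \text{gr}^\cdot K_0(X)$.

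For the outer inductive step $d \geq 1$ and a subvariety $Z\subseteq X$ of codimension $c$, I would distinguish three sub-cases. If $c = 0$, take a resolution $\pi :\widetilde{S} \to S$, a resolution $W$ of the classical fibre product $\widetilde{S}\times_S X$, and embed $W$ as a closed subscheme of $\widetilde{S}\times X\times\mathbb{A}^N$ using quasi-projectivity; since $W$ and the ambient space are smooth, this is automatically a quasi-smooth immersion, producing an $(f,\pi)$-quasi-smooth scheme of dimension $\dim X$ with $s = 0$. The constraint $\lfloor(d+d)/2\rfloor = d \geq 0$ holds, and $\Phi_W([\mathcal{O}_{\widetilde{S}}])$ equals $[\mathcal{O}_X]$ modulo strictly higher codimension. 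If $c = 1$, the analogous construction with $T = \widetilde{V}$ for $V := f(Z)$, and $W$ a resolution of $\widetilde{V}\times_V Z$, yields $[\mathcal{O}_Z]$ modulo higher codimension; the constraint $\lfloor(d+\dim X-\dim V)/2\rfloor \geq 1$ reduces to $d \geq 1$ via $\dim V \leq \dim X - d$. If $c \geq 2$, apply Proposition~\ref{ppp} to find a codimension-one $Y\supseteq Z$ with $g := f|_Y:Y\to S$ surjective of relative dimension $d-1$; letting $\iota:\widetilde{Y}\to X$ be a resolution composed with the inclusion and $\widetilde{Z}\subseteq\widetilde{Y}$ the strict transform of $Z$, the outer induction applied to $g$ gives $[\mathcal{O}_{\widetilde{Z}}]\in\P'^{\leq d-1}_g\text{gr}^\cdot K_0(\widetilde{Y})$, and Proposition~\ref{pushfor} with codimension shift $\dim X - \dim\widetilde{Y} = 1$ yields $\iota_*[\mathcal{O}_{\widetilde{Z}}] \in \P'^{\leq d}_f\,\text{gr}^\cdot K_0(X)$, which equals $[\mathcal{O}_Z]$ modulo higher codimension.

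In each case, the construction recovers $[\mathcal{O}_Z]$ plus an error in strictly higher codimension, absorbed by the inner Noetherian induction (whose base is the $0$-dimensional support case, itself handled by the $c\geq 2$ sub-case via the outer induction when $\dim X \geq 2$, and by the $c = 1$ sub-case when $\dim X = 1$). The main technical obstacle I expect is verifying quasi-smoothness of the correspondences in the $c = 0, 1$ sub-cases: the classical fibre products $\widetilde{S}\times_S X$ and $\widetilde{V}\times_V Z$ are not in general quasi-smooth since neither $\pi$ nor the immersion $V\hookrightarrow S$ need be lci, so one must pass to a resolution $W$ and re-embed into a smooth ambient, checking carefully that the classical surjectivity of $q$ and the identification $\Phi_W([\mathcal{O}_T]) = [\mathcal{O}_Z]$ modulo higher codimension persist through this modification.
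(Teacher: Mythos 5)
Your proposal is correct and follows essentially the same route as the paper: outer induction on the relative dimension $d$, the diagonal correspondence for the generically finite base case, Proposition~\ref{ppp} plus the inductive hypothesis for codimension $\geq 2$, and explicit correspondences built from resolutions of fibre products over $S$ (or over the image of $Z$) for low codimension. Your treatment is marginally cleaner than the paper's in two small ways — you handle the codimension-$0$ class explicitly, and you treat codimension $1$ uniformly rather than splitting on whether $Z \to S$ is surjective — and your inner Noetherian induction on dimension of support plays the role that Propositions~\ref{prop1} and~\ref{prop2} play in the paper's argument; these are presentational differences, not a different method.
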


\begin{proof} 
We use induction on $d$. 
Assume that $f$ is generically finite. Consider the correspodence $\Delta\cong X\hookrightarrow X\times_SX$:
\begin{equation*}
    \begin{tikzcd}
    \Delta\arrow["\sim",r] \arrow["\sim",d]&X\arrow[d,"f"]\\
    X\arrow[r,"f"]&S.
    \end{tikzcd}
\end{equation*}
This implies that $\P'^{\leq 0}_{f}\text{gr}^\cdot K_\cdot(X)=\text{gr}^\cdot K_\cdot(X)$.

Consider $f$ with $d>0$. Let $\iota: Z\hookrightarrow X$ be a subvariety of codimension $\geq 2$. By Proposition \ref{ppp}, there exists $Y\hookrightarrow X$ of codimension $1$ such that $Z\subset Y$ and $Y\to S$ has image $W$ of codimension $\leq 1$ in $S$. Let $Y'\to Y$ be a resolution of singularities and denote the resulting map by $g:Y'\to W$. 
By induction,
\[\P'^{\leq d-1}_{g}\text{gr}^\cdot K_0(Y')=\text{gr}^\cdot K_0(Y').\]
By Proposition \ref{prop2}, \[\text{image}\,\left(\iota_*:\text{gr}_\cdot G_0(Z)\to \text{gr}_\cdot K_0(X)\right)\subset \text{image}\,\left(g_*:\text{gr}_\cdot K_0(Y')\to \text{gr}_\cdot K_0(X)\right).\]
Finally, assume that $Z\hookrightarrow Y$ has codimension $1$. By Proposition \ref{prop1}, it suffices to show that \[\text{image}\left(\text{gr}_{\dim Z}G_0(Z)\to \text{gr}_{\dim Z}G_0(X)\right)\subset \P'^{\leq d}_f\text{gr}^\cdot K_0(X)\] because $\text{gr}_iG_0(Z)$ for $i<\dim Z$ is generated by varieties of smaller dimension than $Z$.
If $Z\to S$ is surjective, then it has relative dimension $d-1$ and we can treat it as above. If $Z\to S$ is not surjective, let $W\subset S$ be its image. Choose a resolution of singularities $T\to W$ and a smooth variety $\Gamma$ with surjective maps $p$ and $q$:
\begin{equation*}
    \begin{tikzcd}
    \Gamma\arrow[d,"q"]\arrow[r,"p"]&Z\arrow[d]\arrow[r,hook]&X\arrow[d,"f"]\\
    T\arrow[r]&W\arrow[r, hook]&S.
    \end{tikzcd}
\end{equation*}
Then $[\Gamma]\in \text{gr}_{\dim X-1}K^q_{T\times_SX}(T\times X)$ and its image $\Phi_\Gamma$ is in $\P'^{\leq d}_{f}\text{gr}^\cdot K_0(X)$. Then \[\text{image}\left(\text{gr}_{\dim Z}G_0(Z)\to \text{gr}_{\dim Z} K_0(X)\right)\subset \text{image}\,\Phi_\Gamma\subset \P'^{\leq d}_{f}\text{gr}^\cdot K_0(X).\]
The conclusion now follows from Proposition \ref{prop1}.

\end{proof}

\begin{proof}[Proof of Theorem \ref{sd}]
We first show that $P^{\leq -s'-1}_f\text{gr}^\cdot K_\cdot(X)=0$. 
Consider a map $\pi:T\to X$ generically finite onto its image $V\subset S$ with $T$ smooth and consider a correspondence 
\[\Gamma\in \text{gr}_{\dim X-b}G_{T\times_S X}(T\times S).\]
Then $\dim X-b\leq \dim T\times_SX\leq \text{max}\left(\dim X, \dim X+\dim T-2\right)$, and so
\[b\geq \text{min}\left(0, -\dim T+2\right).\]
By the bound \eqref{ranges}, it suffices to show that
\[\left\lfloor \frac{-s'-1+\dim X-\dim T}{2}\right\rfloor< \text{min}\left(0, -\dim T+2\right),\] or, alternatively, that
\[\text{max}\big(\dim X-\dim T-1, \dim X+\dim T-5\big)< s',\]
which is true because $0\leq \dim T\leq \dim S$.
\smallskip

We next explain that $\P^{\leq -s-1}_f\KK=0$. We keep the notation from the previous paragraph. Let $[\Gamma]\in \text{gr}_{\dim X-b}K^q_{T\times_S X}(T\times S).$ By Proposition \ref{lb}, we have that \[b\geq \dim X-\dim X_V.\] It suffices to show that
\[\left\lfloor \frac{-s-1+\dim X-\dim T}{2}\right\rfloor< \dim X-\dim X_V,\] or, alternatively, that
\[2\dim X_V-\dim V\leq s-\dim X=\dim X\times_SX,\]
which is true because $2\dim X_V-\dim V\leq \dim X_V\times_VX_V\leq \dim X\times_SX$.
\smallskip

We next show that $\P^{\leq s}_f\text{gr}^\cdot K_0(X)=\text{gr}^\cdot K_0(X)$. We can assume that $f$ is surjective of relative dimension $d$. Use the notation from Subsection \ref{filtPP}.
We have that
\[\P^{\leq s}_f\text{gr}^\cdot K_0(X):=\bigcap_{V\subsetneqq S}\bigcap_{a\in A_V}\text{ker}\left(\tau^{a*}_V:\P'^{\leq s}_f\text{gr}^\cdot K_0(X)\to P'^{> s+c^a_V}_{\widetilde{f^a_V}}\text{gr}^\cdot K_0\left(\widetilde{X^a_V}\right)\right).\]
We claim that
\[\text{reldim}\,\left(\widetilde{X^a_V}\to V\right)=\text{reldim}\,\left(X^a_V\to V\right)\leq s+c^a_V.\]
Indeed,
\begin{align*}
    \dim X^a_V-\dim V&\leq \left(\dim X\times_SX-\dim X\right)+\left(\dim X-\dim X^a_V\right)\\
    2\dim X^a_V-\dim V&\leq \dim X^a_V\times_VX^a_V\leq  \dim X\times_SX,
\end{align*}
which is true.
By Proposition \ref{reldim}, this implies that $P'^{>s+c^a_V}_{\widetilde{f^a_V}}\text{gr}^\cdot K_0\left(\widetilde{X^a_V}\right)=0$. Furthermore, $s\geq d$, so Proposition \ref{reldim} implies that $\P'^{\leq s}_f\text{gr}^\cdot K_0(X)=\text{gr}^\cdot K_0(X)$, and thus 
$\P^{\leq s}_f\text{gr}^\cdot K_0(X)=\text{gr}^\cdot K_0(X).$ This also implies that $P^{\leq s}_f\text{gr}^\cdot K_0(X)=\text{gr}^\cdot K_0(X)$. 
\end{proof}


\subsection{Examples of perverse filtration in $K$-theory}
\label{examplesfilt}

\subsubsection{}\label{oneex} Let $X$ be a smooth variety of dimension $d$, and let $f:X\to \text{Spec}\,\mathbb{C}.$
Then 
\[P^{\leq i}_f\text{gr}^j K_\cdot(X)=
\begin{cases} \mbox{$\text{gr}^jK_\cdot(X)$} & \mbox{if } j\leq \lfloor \frac{i+d}{2}\rfloor, \\ \mbox{$0$} & \mbox{otherwise.} \end{cases}\]

\subsubsection{}\label{projective} Let $X$ be a smooth variety and let $E$ be a vector bundle on $X$ of rank $d+1$. Let $Y:=\mathbb{P}_X(E)$. Denote by $\hbar:=c_1(\mathcal{O}_Y(1))\in \text{gr}^2 K_0(Y).$ 
Consider the projection map $f:Y\to X$. We have that $s(f)=d$. For $i\leq d$, there exists an isomorphism
\begin{align*}
    \bigoplus_{0\leq j\leq\lfloor \frac{i+d}{2}\rfloor}\text{gr}^{a-2j} K_\cdot(X) &\cong P'^{\leq i}_f\text{gr}^a K_\cdot(Y)\\
    \big(x_0,\cdots, x_{\lfloor \frac{i+d}{2}\rfloor}\big)&\mapsto\sum_{j\leq \lfloor \frac{i+d}{2}\rfloor} \hbar^jf^*(x_j).
\end{align*}
The condition for $P^{\leq i}$ is checked using projective bundles over varieties of smaller dimension, and 
we obtain that
\[\bigoplus_{0\leq j\leq\lfloor \frac{i+d}{2}\rfloor}\text{gr}^{a-2j} K_\cdot(X) \cong P^{\leq i}_f\text{gr}^a K_\cdot(Y).\] 

\subsubsection{}
Let $X$ be a smooth variety and let $Z$ be a smooth subvariety of codimension $d+1$. Consider the blow-up diagram for $Y=\text{Bl}_ZX$:
\begin{equation*}
    \begin{tikzcd}
    E\arrow[r, hook, "\iota"]\arrow[d,"p"]& Y\arrow[d,"f"]\\
    Z\arrow[r, hook, "j"]& X.
    \end{tikzcd}
\end{equation*}
Let $\hbar:=c_1(\mathcal{O}_E(1))\in \text{gr}^2K_0(E)$. We have that $s(f)=d-1$. For $i\leq d-1$, there is an isomorphism:
\begin{align*}
    \text{gr}^a K(X)^\varepsilon\oplus
    \bigoplus_{0\leq j\leq\lfloor \frac{i+d}{2}\rfloor-1}\text{gr}^{a-2-2j} K_\cdot(Z) &\cong P^{\leq i}_f\text{gr}^a K_\cdot(Y)\\
    \big(x,z_0,\cdots,z_{\lfloor \frac{i+d}{2}\rfloor-1}\big)&\mapsto f^*(x)+\sum_{j\leq \lfloor \frac{i+d}{2}\rfloor-1} \iota_*\left(\hbar^jq^*(z_j)\right).
\end{align*}
Here $\varepsilon$ is $0$ if $i<0$ and is $1$ otherwise.
This follows from the computation in Subsection \ref{projective} and Proposition \ref{blowup}.

One can check that in the above examples, we have that $\P^{\leq \cdot}_f=P^{\leq \cdot}_f$.

\subsection{Compatibility with the perverse filtration in cohomology}

Consider a proper map $f:X\to S$ with $X$ smooth. 
Define filtrations
$P'^{\leq i}_{f}, P^{\leq i}_{f}, \textbf{P}^{\leq i}_{f}$ on $H^\cdot(X), H^\cdot(X)_{\text{alg}}$ as in Subsections \ref{Kper} and \ref{Kper2}.
We have that 
\[\text{image}\left(\mathfrak{c}: P^{\leq i}_f\text{gr}^j K_0(X)_\mathbb{Q}\to P^{\leq i}_f H^{2j}(X)\right)=P^{\leq i}_f H^{2j}(X)_{\text{alg}}.\]
We use the notation ${}^pH^{\leq i}_f(X)_{\text{full}}$ for the cohomology of summands of ${}^p\tau^{\leq i}Rf_*IC_X$ with support $S$.

\begin{prop}\label{cyclemap}
There exist natural inclusions
\begin{align*}
\P^{\leq i}_fH^\cdot(X)\subset P^{\leq i}_f H^\cdot(X)&\subset {}^pH^{\leq i}_f(X)\\
\P^{\leq i}_fH^\cdot(X)_{\text{alg}}\subset P^{\leq i}_f H^\cdot(X)_{\text{alg}}&\subset {}^pH^{\leq i}_f(X)_{\text{alg}}.
\end{align*}
    Thus the cycle map restricts to
\begin{align*}
    \mathfrak{c}: P^{\leq i}_f \text{gr}^\cdot K_0(X)_{\mathbb{Q}}&\to {}^pH^{\leq i}_f(X)_{\text{alg}}\\
    \mathfrak{c}: \P^{\leq i}_f \text{gr}^\cdot K_0(X)_{\mathbb{Q}}&\to {}^pH^{\leq i}_f(X)_{\text{alg}}.
    \end{align*}
\end{prop}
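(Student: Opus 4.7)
The plan is to deduce the two inclusions from the definitions together with one cohomological Corti--Hanamura-type computation. First I would dispense with the containment $\P^{\leq i}_f H^\cdot(X)\subset P^{\leq i}_f H^\cdot(X)$, which is immediate: the generators of $\P'^{\leq i}_f$ use $(f,\pi)$-quasi-smooth correspondences, a subclass of those generating $P'^{\leq i}_f$, so $\P'^{\leq i}_f\subset P'^{\leq i}_f$, and intersecting with the same kernel conditions $\tau^{a*}_V(-)\in P'^{>i+c^a_V}_{\widetilde{f^a_V}}$ preserves this inclusion; the algebraic version is identical. It then remains to establish $P^{\leq i}_f H^\cdot(X)\subset {}^pH^{\leq i}_f(X)$, and since $P^{\leq i}_f\subset P'^{\leq i}_f$ by definition, the core task reduces to $P'^{\leq i}_f H^\cdot(X)\subset {}^pH^{\leq i}_f(X)$.

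For that central inclusion, I would pick a generator $\Phi_\Gamma(\alpha)$ where $\Gamma$ is a correspondence of dimension $\dim X-s$ supported on $T\times_S Y$ with $\lfloor (i+\dim X-\dim T)/2\rfloor\geq s$. Applying resolution of singularities to the underlying support of $\Gamma$, I would write its cycle class as $\rho_*[Z]$ for a proper morphism $\rho=(\rho_T,\rho_X):Z\to T\times X$ from a smooth variety $Z$ of dimension $\dim X-s$; properness of $\rho_X$ is automatic from properness of $f$ and the support condition on $\Gamma$. In cohomology this realizes $\Phi_\Gamma(\alpha)=\rho_{X*}\rho_T^*\alpha$, with the composition $f\rho_X=\pi\rho_T:Z\to S$ factoring through $V=\pi(T)\subset S$.

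I would then appeal to the Corti--Hanamura analysis of correspondences already invoked in Subsection \ref{percoh}: compatibility of $\rho_T^*$ with the perverse filtration for $\pi:T\to S$ combined with the fact that the proper pushforward $\rho_{X*}$ between smooth varieties shifts perverse level by at most the relative codimension $s$, which together place the image of $\Phi_\Gamma$ in ${}^pH^{\leq 2s+\dim T-\dim X}_f(X)$. The bound \eqref{ranges} rearranges to $2s-\dim X+\dim T\leq i$, giving the desired containment. The algebraic version is a formal consequence: every $\Phi_\Gamma(\alpha)$ is algebraic whenever $\alpha$ is, so $P^{\leq i}_fH^\cdot(X)_{\text{alg}}=P^{\leq i}_f H^\cdot(X)\cap H^\cdot(X)_{\text{alg}}$ and analogously for $\P$, and intersecting the cohomological inclusion with $H^\cdot(X)_{\text{alg}}$ finishes the job.

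The final cycle-map assertion is then automatic: $\mathfrak{c}:\text{gr}^\cdot K_0(X)_{\mathbb{Q}}\to H^\cdot(X)_{\text{alg}}$ intertwines K-theoretic pushforward and pullback with their cohomological counterparts, hence sends the K-theoretic $P^{\leq i}_f$ and $\P^{\leq i}_f$ into the corresponding cohomological filtrations, which by the previous steps lie in ${}^pH^{\leq i}_f(X)_{\text{alg}}$. The hard part is the perverse-shift bookkeeping in the central paragraph: verifying that the floor in \eqref{ranges} yields exactly the shift $2s-\dim X+\dim T$ after pullback-pushforward through $Z$, so that the dimension count of Corti--Hanamura can be reproduced cleanly with the correspondence sitting on the subvariety $T\times_S Y$ rather than on all of $T\times X$.
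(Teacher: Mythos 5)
Your plan breaks at the reduction step. You claim "the core task reduces to $P'^{\leq i}_f H^\cdot(X)\subset {}^pH^{\leq i}_f(X)$," but this inclusion is false in general, and the paper's proof is built around that fact. A correspondence $\Gamma$ on $T\times X$ with $\pi:T\to S$ generically finite onto its image $V$ induces a map of complexes $R\pi_*IC_T[\dim X-\dim T-2s]\to Rf_*IC_X$ on $S$. The dimension constraint \eqref{ranges} only guarantees that the \emph{full-support} summands of $R\pi_*IC_T$ (which sit in perverse degree $0$) map into perverse degree $\leq i$ of $Rf_*IC_X$; the summands of $R\pi_*IC_T$ supported on proper subvarieties of $S$ can land in $Rf_*IC_X$ in arbitrary perverse degree. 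In other words, $P'^{\leq i}_fH^\cdot(X)$ contains contributions of cohomology from sheaves $IC_W(\mathcal{L}')[j]$ with $W\subsetneqq S$ and $j>i$, so it is not contained in ${}^pH^{\leq i}_f(X)$. Your dimension count — pulling $\alpha$ back through $\rho_T$ and pushing forward through $\rho_{X}$ with a shift by $s$ — implicitly treats $\rho_T$ as though it were a smooth morphism to make the perverse level of $\rho_T^*\alpha$ drop by $\dim Z-\dim T$, which you have no right to assume for an arbitrary resolution $Z$ of the support of $\Gamma$; and even when it does, the resulting bound only controls the full-support part.

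What your argument leaves out is precisely the role of the kernel conditions that distinguish $P^{\leq i}_f$ from $P'^{\leq i}_f$. The paper shows that $P'^{\leq i}_fH^\cdot(X)$ maps onto ${}^pH^{\leq i}_f(X)_{\text{full}}$ plus arbitrary contributions supported on $V\subsetneqq S$, then uses the intersection $\bigcap_{V,a}\ker\bigl(\tau^{a*}_V:P'^{\leq i}_f\to P'^{>i+c^a_V}_{\widetilde{f^a_V}}\bigr)$ in the definition of $P^{\leq i}_f$, together with the Corti--Hanamura injectivity statement \eqref{comp2} (that simple summands of ${}^p\mathcal{H}^j(Rf_*IC_X)$ with support $V$ inject into $\bigoplus_a{}^p\mathcal{H}^{j+c^a_V}(Rg^a_{V*}IC_{Y^a_V})$), to force the smaller-support contributions down to perverse level $\leq i$. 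Since your sketch never invokes \eqref{comp2} nor the restriction-to-$\widetilde{X^a_V}$ conditions, it cannot establish the stated inclusion; the parts you do prove ($\P\subset P$ and the formal cycle-map statement once the filtration inclusion is in hand) are fine, but the middle step is the whole content of the proposition.
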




\begin{proof}

Let $\pi:T\to S$ be a generically finite map with $T$ smooth. Consider a correspondence 
\[\Gamma\in \text{gr}_{\dim X-s}K_{T\times_S X, 0}(T\times X)\] such that 
\[\left\lfloor \frac{i+\dim X-\dim T}{2}\right\rfloor\geq s.\] 
The correspondence $\Gamma$ induces a map of constructible sheaves on $S$:
\begin{align*}\label{boundd}
R\pi_*\mathbb{Q}_T[-2s]&\xrightarrow{\Phi_\Gamma} Rf_*\mathbb{Q}_X.\\
Rp_*IC_{T}[\dim X-\dim T-2s]&\xrightarrow{\Phi_\Gamma} Rf_*IC_X.
\end{align*}
If $\pi$ is not surjective, $R\pi_*IC_T$ has summands with support $W\subsetneqq S$. If $\pi$ is surjective,
the complex $R\pi_*IC_{T}$ has summands $IC_S(\mathcal{L})$ of full support and of perverse degree zero, and other summands with support $W\subsetneqq S$. The perverse degree of the sheaf with support $S$ in the image of $\Phi_\Gamma$ is
\[\dim X-\dim T-2s\leq i.\] Thus $P'^{\leq i}_fH^\cdot(X)$ contains cohomology of sheaves $IC_S(\mathcal{L})[j]$ with $j\leq i$ which appear as summands of $Rf_*IC_X$ and of other sheaves with support $W\subsetneqq S$. Thus \[P'^{>i}_fH^\cdot(X)\twoheadrightarrow {}^pH^{>i}_f(X)_{\text{full}}.\]
Using the notation in Subsection \ref{filtPP}, we have that
\[P^{\leq i}_fH^\cdot(X):=\bigcap_{V\subsetneqq S}\bigcap_{a\in A_V}\text{ker}\left(\tau^{a*}_V:P'^{\leq i}_fH^\cdot(X)\to P'^{> i+c^a_V}_{\widetilde{f^a_V}}H^\cdot\left(\widetilde{X^a_V}\right)\right).\]
In particular, 
\[P^{\leq i}_fH^\cdot(X)\subset\bigcap_{V\subsetneqq S}\bigcap_{a\in A_V}\text{ker}\left(\tau^{a*}_V:P'^{\leq i}_fH^\cdot(X)\to {}^pH^{> i+c^a_V}_{\widetilde{f^a_V}}\left(\widetilde{X^a_V}\right)_{\text{full}}\right).\]
Using \eqref{comp2}, we obtain that 
$P^{\leq i}_fH^\cdot(X)\subset {}^pH^{\leq i}_f(X)$.

\end{proof}

\textbf{Remark.} We expect equalities $\P^{\leq i}_fH^\cdot(X)_{\text{alg}}= P^{\leq i}_f H^\cdot(X)_{\text{alg}}= {}^pH^{\leq i}_f(X)_{\text{alg}}$ in the above Proposition.

\section{Intersection $K$-theory}\label{4}

\subsection{Definition of intersection $K$-theory}\label{defintK}
Let $S$ be a variety, let $U$ be an open subset, let $f:X\to S$ be such $f^{-1}(U)\to U$ is étale, and let $L=f_*\left(\mathbb{Z}_{f^{-1}(U)}\right)$.
Recall the notation of Subsection \ref{filtPP}. Define 
\begin{align*}
\widetilde{P}^{\leq i}_f\KK&:=
\text{image}\left(\bigoplus_{V\subsetneqq S}\bigoplus_{a\in A_V}P^{\leq i}_f\text{gr}^\cdot K_{X^a_V}(X)\to P^{\leq i}_f\KK\right)\\
\widetilde{\P}^{\leq i}_f\KK&:=
\text{image}\left(\bigoplus_{V\subsetneqq S}\bigoplus_{a\in A_V}\P^{\leq i}_f\text{gr}^\cdot K_{X^a_V}(X)\to \P^{\leq i}_f\KK\right).
\end{align*}
Define
\begin{align*}
\text{gr}^\cdot IK_\cdot(S,L)&:=P^{\leq 0}_f\text{gr}^\cdot K_\cdot(X)\big/\left(\widetilde{P}^{\leq 0}_f\text{gr}^\cdot K_\cdot(X)\cap \text{ker}\,f_*\right)\\
\text{gr}^\cdot \I K_\cdot(S,L)&:=\P^{\leq 0}_f\text{gr}^\cdot K_\cdot(X)\big/\left(\widetilde{\P}^{\leq 0}_f\text{gr}^\cdot K_\cdot(X)\cap\text{ker}\,f_*\right).
\end{align*}

\begin{thm}\label{intK}
The definitions of $\text{gr}^\cdot IK_{\cdot}(S,L)$ and $\text{gr}^\cdot \I K_{\cdot}(S,L)$ do not depend on the choice of the map $f:X\to S$ with $f^{-1}(U)\to U$ étale such that $L\cong f_*\left(\mathbb{Z}_{f^{-1}(U)}\right)$. Further, let $U^o\subset U$ be an open set and let $L^o:=L|_{U^o}$. Then 
\begin{align*}
    \text{gr}^\cdot IK_\cdot(S,L)&\cong \text{gr}^\cdot IK_\cdot(S,L^o),\\
    \text{gr}^\cdot \I K_\cdot(S,L)&\cong\text{gr}^\cdot \I K_\cdot(S,L^o).
\end{align*}
\end{thm}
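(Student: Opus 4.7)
The plan is to first prove independence from the choice of $f$, and to deduce the second statement as an immediate consequence. Given two maps $f_i : X_i \to S$ ($i=1,2$) étale over $U$ with $f_{i*}(\mathbb{Z}_{f_i^{-1}(U)}) \cong L$, the two identifications with $L$ yield an isomorphism $f_1^{-1}(U) \cong f_2^{-1}(U)$ of étale covers of $U$; taking the closure of its graph in $X_1 \times_S X_2$ and resolving singularities produces a smooth $X_{12}$ with proper birational $S$-morphisms $h_i : X_{12} \to X_i$. It therefore suffices to show: for a proper birational morphism $h : Y \to X$ of smooth varieties over $S$ satisfying $f_Y = f_X \circ h$ with $h|_{f_Y^{-1}(U)}$ an isomorphism, the pushforward induces isomorphisms between the intersection $K$-theories computed via $Y$ and via $X$, for both $IK$ and $\I K$.

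By Theorems \ref{functorialityP} and \ref{functorP}, since $c = \dim X - \dim Y = 0$, both $h_*$ and $h^*$ preserve $P^{\leq 0}_f$ and $\P^{\leq 0}_f$. Since $h$ is proper birational between smooth varieties, $Rh_*\mathcal{O}_Y \cong \mathcal{O}_X$, so the projection formula yields $h_* \circ h^* = \id$ on $\text{gr}^\cdot K_\cdot(X)$, giving surjectivity of the induced $\overline{h}_*$ on quotients. To obtain well-defined maps on the quotients, I would verify: (a) $h_*$ and $h^*$ preserve $\widetilde{P}^{\leq 0}_f$, because $h_*$ sends classes supported on a component $Y^b_V \subseteq f_Y^{-1}(V)$ to classes supported on $h(Y^b_V) \subseteq f_X^{-1}(V)$, and $h^*$ sends classes supported on $X^a_V$ to classes supported on $h^{-1}(X^a_V) \subseteq f_Y^{-1}(V)$; and (b) $h_*(\ker f_{Y*}) \subseteq \ker f_{X*}$, immediate from $f_Y = f_X h$, while $h^*(\ker f_{X*}) \subseteq \ker f_{Y*}$ follows from $f_{Y*} \circ h^* = f_{X*} \circ h_* \circ h^* = f_{X*}$.

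For injectivity of $\overline{h}_*$, I would start from $y \in P^{\leq 0}_{f_Y}$ with $h_*(y) \in \widetilde{P}^{\leq 0}_{f_X} \cap \ker f_{X*}$ and decompose $y = h^*h_*(y) + (y - h^*h_*(y))$. By (a) and (b), the first summand lies in $\widetilde{P}^{\leq 0}_{f_Y} \cap \ker f_{Y*}$. For the second summand, $h_* h^* = \id$ gives $y - h^*h_*(y) \in \ker h_*$; since $h$ is an isomorphism over $U$, the $K$-theoretic localization sequence identifies $\ker h_*$ with the image of classes supported on the exceptional locus $E$, which is contained in $f_Y^{-1}(S \setminus U)$, and hence maps into $\widetilde{P}^{\leq 0}_{f_Y}$ after decomposing $S \setminus U$ into its irreducible components. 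Combined with $f_{Y*}(y - h^*h_*(y)) = 0$, this shows $y$ vanishes in the intersection $K$-theory computed via $Y$. The $\I K$ case is handled identically via Theorem \ref{functorP}. For the restriction statement, any $f : X \to S$ étale over $U$ is also étale over $U^o \subseteq U$, with $f_*(\mathbb{Z}_{f^{-1}(U^o)}) \cong L^o$ by étale base change; since the formulas defining $\text{gr}^\cdot IK$ and $\text{gr}^\cdot \I K$ depend only on $f$ and not on $U$, the same $f$ computes both sides.

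The main obstacle I anticipate is the careful verification of (a), particularly that $h^*$ sends $P^{\leq 0}_{f_X}\text{gr}^\cdot K_{X^a_V}(X)$ into the corresponding direct sum $\bigoplus_b P^{\leq 0}_{f_Y}\text{gr}^\cdot K_{Y^b_V}(Y)$ indexed by components of $h^{-1}(X^a_V)$; this requires combining the functoriality of Theorem \ref{functorialityP} with the independence of $P^{\leq \cdot}_f$ from the choice of auxiliary resolutions (Subsection \ref{filtPP}). A related subtlety is verifying that classes in $P^{\leq 0}_{f_Y}$ with set-theoretic support in $f_Y^{-1}(V)$ for $V \subsetneqq S$ admit a lift to the direct sum defining $\widetilde{P}^{\leq 0}_{f_Y}$ while preserving the perverse condition.
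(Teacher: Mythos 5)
Your approach differs genuinely from the paper's. The paper invokes the weak factorization theorem of \cite{akmw} to reduce to a single blow-up $\pi\colon Y=\mathrm{Bl}_ZX\to X$ along a smooth center $Z$, and then proves (Propositions \ref{prp}, \ref{blowup}, via the Bondal--Orlov semi-orthogonal decomposition of $D^b(Y)$) the direct sum decompositions
\begin{equation*}
P^{\leq i}_g\text{gr}^\cdot K_\cdot(Y)=\pi^*P^{\leq i}_f\text{gr}^\cdot K_\cdot(X)\oplus P^{\leq i}_g\text{gr}^\cdot K_E(Y)^0,\qquad \widetilde{P}^{\leq i}_g\text{gr}^\cdot K_\cdot(Y)=\pi^*\widetilde{P}^{\leq i}_f\text{gr}^\cdot K_\cdot(X)\oplus P^{\leq i}_g\text{gr}^\cdot K_E(Y)^0,
\end{equation*}
after which the theorem follows by cancelling the common summand and using injectivity of $\pi^*$. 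You instead work with an arbitrary proper birational $h\colon Y\to X$ over $S$ (coming from a resolution of the graph closure), use $h_*h^*=\id$ for surjectivity on quotients, and try to prove injectivity by splitting $y=h^*h_*(y)+(y-h^*h_*(y))$.

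The gap you flag at the end is real and is the crux of the matter; as written, your argument does not close it. For the second summand, knowing $y-h^*h_*(y)\in P^{\leq 0}_{f_Y}\text{gr}^\cdot K_\cdot(Y)$ lies in the image of $\text{gr}^\cdot K_E(Y)$ does not show it lies in $\widetilde{P}^{\leq 0}_{f_Y}$: the latter requires a preimage in $\bigoplus_{V,b}P^{\leq 0}_{f_Y}\text{gr}^\cdot K_{Y^b_V}(Y)$, where the perverse condition is imposed on the \emph{supported} groups $\text{gr}^\cdot K_{Y^b_V}(Y)$ and these filtrations are defined independently of the ambient one. There is no a priori compatibility between ``$P^{\leq 0}$ in ambient $K$-theory and supported on $E$'' and ``in the image of $P^{\leq 0}$ on $K_E$''. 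The paper proves exactly this kind of compatibility in Proposition \ref{blowup}, and the proof hinges on the explicit blow-up structure (Bondal--Orlov gives a genuine direct sum splitting of the supported $K$-groups that is visibly compatible with correspondences, hence with $P'$, $P$ and $\P$); nothing analogous is available for a general proper birational morphism. The same issue reappears in your step (a): showing $h^*(\widetilde{P}^{\leq 0}_{f_X})\subset\widetilde{P}^{\leq 0}_{f_Y}$ requires decomposing $h^*$ of a class supported on $X^a_V$ along the components of $f_Y^{-1}(V)$ \emph{while preserving} the $P^{\leq 0}$ condition on each supported piece, which again is what Proposition \ref{blowup} supplies for blow-ups. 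So the reduction to blow-ups via weak factorization is not an optional convenience in this proof; it is what makes the supported-filtration bookkeeping tractable. A secondary, smaller point: passing the $G$-theory localization argument (identifying $\ker h_*$ with classes supported on $E$) to the associated graded $\text{gr}^\cdot K_\cdot$ requires the codimension filtration to be strictly compatible with restriction to $Y\setminus E$, which should be justified rather than asserted. Your treatment of the $\ker f_*$ bookkeeping, the surjectivity via $h_*h^*=\id$, and the $U^o$ reduction are all fine.
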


We start with some preliminary results.  
Let $f:X\to S$ be a proper map with $X$ smooth. Let $Z$ be a smooth subvariety of $X$ with normal bundle $N$, $Y=\text{Bl}_ZX$, and $E=\mathbb{P}_Z(N)$ the exceptional divisor
\begin{equation*}
    \begin{tikzcd}
    E\arrow[r, hook, "\iota"]\arrow[d,"p"]& Y\arrow[d,"\pi"]\\
    Z\arrow[r, hook, "j"]& X.
    \end{tikzcd}
\end{equation*}
Consider the proper maps
\begin{equation*}
    \begin{tikzcd}
    E\arrow[r, hook, "\iota"]\arrow[dr,"h"']& Y\arrow[d,"g"]\arrow[r,"\pi"]&X\arrow[dl,"f"]\\
    & S.&
    \end{tikzcd}
\end{equation*}
Let $X'\hookrightarrow X$ be a closed subset, and denote its preimages in $Y$, $Z$, $E$ by $Y'$, $Z'$, $E'$ respectively. Denote by \[\text{gr}_\cdot K_{Y'}(Y)^0=\text{ker}\big(\pi_*: \text{gr}_\cdot K_{Y'}(Y)\to \text{gr}_\cdot K_{X'}(X)\big).\]

\begin{prop}\label{prp}
 Let $T\to S$ be a map with $T$ smooth which is generically finite onto its image. Then
\begin{align*}
    \text{gr}_\cdot K_{T\times_SY'}(T\times Y)&=\pi^*\text{gr}_\cdot K_{T\times_SX'}(T\times X)\oplus \text{gr}_\cdot K_{T\times_SE'}(T\times Y)^0\\
    \text{gr}_\cdot K^q_{T\times_SY'}(T\times Y)&=\pi^*\text{gr}_\cdot K^q_{T\times_SX'}(T\times X)\oplus \text{gr}_\cdot K^q_{T\times_SE'}(T\times Y)^0.
\end{align*}
\end{prop}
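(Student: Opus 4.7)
The plan is to deduce this from the classical blow-up formula for K-theory, base-changed to $T$. Since blow-ups commute with smooth base change, the map $\tilde\pi := \operatorname{id}_T \times \pi : T \times Y \to T \times X$ is again the blow-up of $T \times Z$ inside $T \times X$, with exceptional divisor $T \times E$. The crucial input $R\tilde\pi_* \mathcal{O}_{T\times Y} \cong \mathcal{O}_{T\times X}$ combined with the projection formula gives $\tilde\pi_* \tilde\pi^* = \operatorname{id}$, so $\tilde\pi^*$ is split injective on K-theory with supports. Passing to $\text{gr}_\cdot$ then produces a canonical decomposition
\[
    \text{gr}_\cdot K_{T\times_S Y'}(T\times Y) \;=\; \tilde\pi^*\, \text{gr}_\cdot K_{T\times_S X'}(T\times X) \;\oplus\; \ker(\tilde\pi_*).
\]

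The second step is to identify $\ker(\tilde\pi_*)$ with $\text{gr}_\cdot K_{T\times_S E'}(T\times Y)^0$. The inclusion $\supseteq$ is by definition of the superscript $0$. For the reverse inclusion, Thomason's blow-up formula gives an explicit complement to $\tilde\pi^* K(T\times X)$ as a sum of summands of the form $\tilde\iota_*\bigl(\mathcal{O}_{T\times E}(-i)\otimes \tilde p^*(-)\bigr)$ for $1 \leq i \leq c-1$, each manifestly supported on $T \times E$. Intersecting with the support condition $T \times_S Y'$ yields support inside $T \times E \cap T \times_S Y' = T \times_S E'$, and passing to the associated graded for the dimension-of-support filtration preserves the decomposition because $\tilde\pi$ is birational (so $\tilde\pi^*$ preserves dimension of support) and $\tilde\iota$ is a regular closed immersion.

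For the quasi-smooth version, $\tilde\pi^*$ sends $\text{gr}_\cdot K^q_{T\times_S X'}(T\times X)$ into $\text{gr}_\cdot K^q_{T\times_S Y'}(T\times Y)$ by Proposition \ref{quasi}, applied to the surjective proper map $\tilde\pi$. Conversely, the Thomason-type generators of the complement factor through the quasi-smooth closed immersion $\tilde\iota : T \times E \hookrightarrow T \times Y$, and quasi-smooth correspondences on $T \times E$ push forward to quasi-smooth correspondences on $T \times Y$, so these generators lie in $\text{gr}_\cdot K^q_{T\times_S E'}(T\times Y)^0$. The internal direct-sum property for quasi-smooth classes is inherited from the classical one: applying $\tilde\pi_*$ to any class in both summands and using $\tilde\pi_* \tilde\pi^* = \operatorname{id}$ forces it to vanish.

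The main obstacle will be the careful bookkeeping of Thomason's explicit generators against the support conditions and the dimension-of-support filtration simultaneously, and in particular verifying that the quasi-smoothness condition is preserved by the explicit pieces of the decomposition rather than only by some representative of the K-theoretic class.
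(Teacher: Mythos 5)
Your proof is correct and takes essentially the same route as the paper's: both deduce the decomposition from the blow-up formula for $K$-theory (you cite Thomason's form directly, the paper passes through the Bondal--Orlov semi-orthogonal decomposition, but these are the same statement at the level of $\text{gr}^\cdot K$), combine it with localization for supports, and then invoke Proposition~\ref{quasi} to transport the splitting to $K^q$. The paper's presentation is more explicit about the graded summands $\iota_*(\hbar^k\cdot p^*\,\text{gr}^{j-2-2k}K_{T\times_S Z'}(T\times Z))$ while yours emphasizes the projection-formula splitting and the kernel characterization, but the underlying argument is identical.
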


\begin{proof}
Let $c+1$ be the codimension of $Z$ in $X$. Denote by $\mathcal{O}(1)$ the canonical line bundle on $E$ and let $\hbar=c_1(\mathcal{O}(1))\in\text{gr}^2K_0(E)$.
There is a semi-orthogonal decomposition \cite[Theorem 4.2]{BO} with $\pi^*$ fully faithful on $D^b(X)$:
\[D^b(Y)=\left\langle \pi^*D^b(X),\iota_*\left(p^*D^b(Z)\otimes \OO(-1)\right),\cdots,\iota_*\left(p^*D^b(Z)\otimes \OO(-c)\right)\right\rangle,\]
which implies that 
\[\text{gr}^j K_\cdot(Y)=\pi^*\text{gr}^j K_\cdot(X)\oplus \bigoplus_{0\leq k\leq c-1}\iota_*\left(\hbar^k\cdot p^*\text{gr}^{j-2-2k} K_\cdot(Z)\right).\]
Using the analogous decomposition for $Y\setminus Y'=\text{Bl}_{Z\setminus Z'}\left(X\setminus X'\right)$ and the localization sequence in K-theory \cite[V.2.6.2]{W}, we obtain that
\begin{equation*}
\text{gr}^j K_{Y'}(Y)=\pi^*\text{gr}^j K_{X'}(X)\oplus \bigoplus_{0\leq k\leq c-1}\iota_*\left(\hbar^k\cdot p^*\text{gr}^{j-2-2k} K_{Z'}(Z)\right).
\end{equation*}
In particular, we have that
\begin{equation*}
\text{gr}^j K_{T\times_S Y'}(T\times Y)=\pi^*\text{gr}^j K_{T\times_S X'}(T\times X)\oplus \bigoplus_{0\leq k\leq c-1}\iota_*\left(\hbar^k\cdot p^*\text{gr}^{j-2-2k} K_{T\times_S Z'}(T\times Z)\right)
\end{equation*}
and thus that 
\[\text{gr}_\cdot K_{T\times_SY'}(T\times Y)=\pi^*\text{gr}_\cdot K_{T\times_SX'}(T\times X)\oplus \text{gr}_\cdot K_{T\times_SE'}(T\times Y)^0.\]
By Proposition \ref{quasi}, we also have that 
\[\text{gr}_\cdot K^q_{T\times_SY'}(T\times Y)=\pi^*\text{gr}_\cdot K^q_{T\times_SX'}(T\times X)\oplus \text{gr}_\cdot K^q_{T\times_SE'}(T\times Y)^0.\]
\end{proof}

An immediate corollary of Proposition \ref{prp} is:

\begin{cor}\label{blowup2}
We continue with the notation from Proposition \ref{prp}. There are decompositions
\begin{align*}
    P'^{\leq i}_g \text{gr}^\cdot K_{Y'}(Y)&=\pi^*P'^{\leq i}_f\text{gr}^\cdot K_{X'}(X)\oplus P'^{\leq i}_g \text{gr}^\cdot K_{E'}(Y)\\
    \P'^{\leq i}_g \text{gr}^\cdot K_{Y'}(Y)&=\pi^*\P'^{\leq i}_f\text{gr}^\cdot K_{X'}(X)\oplus \P'^{\leq i}_g \text{gr}^\cdot K_{E'}(Y).
\end{align*}
\end{cor}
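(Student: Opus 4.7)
The plan is to lift the direct-sum decomposition of correspondences from Proposition \ref{prp} to a decomposition of the induced maps, and then check that the dimension bound defining $P'^{\leq i}$ (resp.\ $\P'^{\leq i}$) is preserved by each summand.

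Fix a generically finite $\pi_T : T \to S$ with $T$ smooth, and write $\widetilde{\pi} : T \times Y \to T \times X$ for the base change of $\pi$. Since $\pi : Y \to X$ is proper and $\dim X = \dim Y$, the square
\begin{equation*}
\begin{tikzcd}
T \times Y \arrow[d,"\widetilde{\pi}"] \arrow[r,"p_Y"] & Y \arrow[d,"\pi"]\\
T \times X \arrow[r,"p_X"] & X
\end{tikzcd}
\end{equation*}
gives (derived) base change $\pi^* p_{X*} = p_{Y*} \widetilde{\pi}^*$. By Proposition \ref{prp} (and its $K^q$-version), any $\Gamma \in \text{gr}_{\dim Y - s} K_{T \times_S Y', 0}(T \times Y)$ decomposes uniquely as $\Gamma = \widetilde{\pi}^* \Gamma_X + \Gamma_E$ with
$\Gamma_X \in \text{gr}_{\dim X - s} K_{T \times_S X', 0}(T \times X)$ and $\Gamma_E \in \text{gr}_{\dim Y - s} K_{T \times_S E', 0}(T \times Y)^0$.

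The computation $\Phi_{\widetilde{\pi}^* \Gamma_X}(F) = p_{Y*}(\widetilde{\pi}^*\Gamma_X \otimes \widetilde{\pi}^* q_X^* F) = \pi^* p_{X*}(\Gamma_X \otimes q_X^* F) = \pi^* \Phi_{\Gamma_X}(F)$ then shows the first summand lies in $\pi^* P'^{\leq i}_{f} \text{gr}^\cdot K_{X'}(X)$, while $\Phi_{\Gamma_E}$ is induced by a correspondence supported on $T \times_S E'$ and so lies in $P'^{\leq i}_g \text{gr}^\cdot K_{E'}(Y)$. Crucially, since $\dim Y = \dim X$, the dimension constraint $\lfloor (i + \dim Y - \dim T)/2 \rfloor \geq s$ that qualifies $\Gamma$ for the level $i$ filtration is unchanged by the splitting, so it is inherited by both $\Gamma_X$ and $\Gamma_E$. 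The reverse inclusions are immediate: $\pi^* P'^{\leq i}_f \subseteq P'^{\leq i}_g$ by Proposition \ref{pullback} with $c = 0$, and $P'^{\leq i}_g \text{gr}^\cdot K_{E'}(Y) \subseteq P'^{\leq i}_g \text{gr}^\cdot K_{Y'}(Y)$ by the inclusion $E' \subset Y'$. For $\P'$, the identical argument goes through using the $K^q$-clause of Proposition \ref{prp} combined with Proposition \ref{quasi}, which guarantees that quasi-smoothness of the correspondence is preserved under $\widetilde{\pi}^*$ and through the $E'$-supported summand.

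The main subtlety is directness of the sum. Directness in Proposition \ref{prp} uses the normalized second factor $\text{gr}^\cdot K_{E'}(Y)^0 = \ker(\pi_*)$, whereas the statement of the corollary writes $P'^{\leq i}_g \text{gr}^\cdot K_{E'}(Y)$ without the superscript $0$, and the inclusion $K_{E'}(Y) \hookrightarrow K_{Y'}(Y)$ generically meets $\pi^* K_{X'}(X)$ nontrivially (for instance, $\iota_* \mathcal{O}_E$ is not in $\ker \pi_*$). I would resolve this by pairing the correspondence decomposition with Proposition \ref{prp}'s ambient splitting: any class produced as $\Phi_{\Gamma_E}$ using a correspondence in the $^0$-part automatically lands in the $\ker \pi_*$-summand of $\text{gr}^\cdot K_{Y'}(Y)$, and so cannot overlap with the $\pi^*$-summand. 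In effect, the two pieces on the right-hand side must be read through the semiorthogonal bookkeeping of Proposition \ref{prp}; this is the bit that needs to be spelled out carefully rather than called ``immediate.''
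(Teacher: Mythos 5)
Your proof is essentially the argument the paper implicitly has in mind when it calls this an ``immediate corollary'' of Proposition \ref{prp}: split the correspondence according to that proposition, push the splitting through $\Phi$ via flat base change for the square with $p_X, p_Y, \pi, \widetilde{\pi}$, and observe that since $\dim X = \dim Y$ the dimension bound \eqref{ranges} is inherited by both pieces. The computation $\Phi_{\widetilde{\pi}^*\Gamma_X} = \pi^*\Phi_{\Gamma_X}$ and the containment of the $E'$-supported piece are correct, and the $\P'$-case goes through using the $K^q$-clause of Proposition \ref{prp} together with Proposition \ref{quasi}.

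Your observation about the missing superscript $^0$ is well taken and, in my reading, identifies a typographical slip rather than a gap in your argument. Everywhere else the paper writes the $E'$-summand with the $^0$: in Proposition \ref{prp} itself, in the conclusion drawn from Corollary \ref{blowup2} inside the proof of Proposition \ref{blowup}, and in the proof of Theorem \ref{intK}. Moreover the sum cannot be direct without it: for nonempty $Z' = Z\cap X'$, classes of the form $\pi^*j_*(z)$ with $z\in G_{Z'}(Z)$ are supported on $E'$ but also lie in $\pi^*\text{gr}^\cdot K_{X'}(X)$, and they propagate to overlap in the $P'^{\leq i}$-filtrations because the dimension bound is unchanged. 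Your resolution — that $\Gamma_E$ being in the $^0$-part forces $\Phi_{\Gamma_E}$ into $\ker\pi_*$, since $\pi_*\Phi_{\Gamma_E}(F) = p_{X*}(\widetilde{\pi}_*\Gamma_E\otimes q_X^*F) = 0$ — is the right way to restore directness, and it shows that the second summand should be read as $P'^{\leq i}_g\text{gr}^\cdot K_{E'}(Y)^0$ (respectively $\P'^{\leq i}_g\text{gr}^\cdot K_{E'}(Y)^0$). One small imprecision: your example $\iota_*\mathcal{O}_E \notin\ker\pi_*$ only shows the $E$-supported piece is not all in the $^0$-part; the overlap that actually threatens directness is the one noted above, $\pi^*$ of $Z'$-supported classes. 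The substance of the argument is correct.
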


We next prove:

\begin{prop}\label{blowup}
We continue with the notation from Proposition \ref{prp}. There are decompositions
\begin{align*}
    P^{\leq i}_g \text{gr}^\cdot K_{Y'}(Y)&=\pi^*P^{\leq i}_f\text{gr}^\cdot K_{X'}(X)\oplus P^{\leq i}_g \text{gr}^\cdot K_{E'}(Y)\\
    \P^{\leq i}_g \text{gr}^\cdot K_{Y'}(Y)&=\pi^*\P^{\leq i}_f\text{gr}^\cdot K_{X'}(X)\oplus \P^{\leq i}_g \text{gr}^\cdot K_{E'}(Y).
\end{align*}
\end{prop}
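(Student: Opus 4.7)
The plan is to upgrade Corollary \ref{blowup2}---which gives the decomposition at the level of the primed filtrations $P'^{\leq i}$ and $\P'^{\leq i}$---to the finer filtrations $P^{\leq i}$ and $\P^{\leq i}$, which are cut out from the primed versions by pullback kernel conditions at resolutions $\widetilde{X^b_V}$, $\widetilde{Y^a_V}$ of irreducible components of $f^{-1}(V)$ and $g^{-1}(V)$ for $V\subsetneqq S$. The inclusion $\pi^*P^{\leq i}_f\text{gr}^\cdot K_{X'}(X)\oplus P^{\leq i}_g\text{gr}^\cdot K_{E'}(Y)\subseteq P^{\leq i}_g\text{gr}^\cdot K_{Y'}(Y)$ is immediate: pullback functoriality (Theorem \ref{functorialityP}) applied to the birational map $\pi:Y\to X$ gives the first summand (with $c=0$), the inclusion $E'\subseteq Y'$ gives the second, and the sum is direct by Corollary \ref{blowup2}. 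The substantive content is the reverse inclusion.

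For the reverse inclusion, take $x\in P^{\leq i}_g\text{gr}^\cdot K_{Y'}(Y)$ and write $x=\pi^*y+z$ uniquely with $y\in P'^{\leq i}_f\text{gr}^\cdot K_{X'}(X)$ and $z\in P'^{\leq i}_g\text{gr}^\cdot K_{E'}(Y)$ via Corollary \ref{blowup2}. The goal is to check that the kernel conditions satisfied by $x$ split as separate conditions on $y$ and on $z$. For each $V\subsetneqq S$ the irreducible components of $g^{-1}(V)$ come in two kinds: (i) strict transforms $\widetilde{X^b_V}$ of irreducible components $X^b_V\not\subseteq Z$ of $f^{-1}(V)$, for which $c^a_V=c^b_V$; and (ii) projective-bundle subvarieties $p^{-1}(W')\subseteq E$ for irreducible subvarieties $W'\subseteq Z$ with image in $V$, which include $W'=X^b_V$ when $X^b_V\subseteq Z$. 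In case (i), choose compatible resolutions so that $\widetilde\pi:\widetilde{Y^a_V}\to\widetilde{X^b_V}$ is a proper surjective birational map over $V$ with $\pi\circ\tau^a_V=\tau^b_V\circ\widetilde\pi$; then $\tau^{a*}_V\pi^*y=\widetilde\pi^*\tau^{b*}_Vy$, and Proposition \ref{pullback} (applied to $\widetilde\pi$ over $V$, with $c=0$) together with Proposition \ref{smpr} (which says $\widetilde\pi^*$ reflects the rational perverse filtration for surjective birational maps) identify the kernel condition for $y$ at $(V,b)$ with the kernel condition for $\pi^*y$ at $(V,a)$; the matching condition on $z$ at $(V,a)$ then follows by subtraction from the assumed condition on $x$.

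In case (ii), let $\pi_W:\widetilde W\to W'$ be a resolution and take $\widetilde{Y^a_V}:=\mathbb{P}_{\widetilde W}\bigl(\pi_W^*(N|_{W'})\bigr)$, a $\mathbb{P}^c$-bundle over $\widetilde W$ whose projection factors $\tau^a_V$ through $E$. Using $\pi\circ\iota=j\circ p$ from the blow-up diagram one finds that $\tau^{a*}_V\pi^*y$ is a pure pullback from $\widetilde W$, hence lies in the degree-zero summand of the projective-bundle decomposition of $\text{gr}^\cdot K(\widetilde{Y^a_V})$ supplied by Proposition \ref{prp}, while $\tau^{a*}_V z$---coming from the $\hbar$-twisted classes in Corollary \ref{blowup2}---spreads over the higher $\hbar$-pieces together with a controlled contribution in the degree-zero piece that is itself a pullback from a resolution of $W'$, and this latter contribution is precisely what encodes the kernel condition for $y$ at the component $X^b_V=W'\subseteq Z$, when such a component exists. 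Applying the filtration-preserving projective-bundle formula from Subsection \ref{projective} then separates the kernel condition for $x$ at $(V,a)$ into the required conditions on $y$ and on $z$. The argument for $\P^{\leq i}$ is strictly parallel: the correspondences invoked---graphs of birational morphisms and projective-bundle projections---are quasi-smooth, so the $\P'$-analogue of Corollary \ref{blowup2} and Proposition \ref{quasi} supply the needed inputs. The main obstacle is the type (ii) case: making the separation of $\pi^*y$ and $z$ visible at the level of $\tau^{a*}_V$ requires combining the projective-bundle decomposition of Proposition \ref{prp} with $\pi\circ\iota=j\circ p$ and the excess intersection identity \eqref{eul} from the proof of Theorem \ref{functorialityP} in order to track the pushforwards through the resolution $\widetilde W\to W'$.
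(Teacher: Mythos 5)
Your overall architecture (start from Corollary \ref{blowup2}, then match the extra kernel conditions that cut $P^{\leq i}$ out of $P'^{\leq i}$) is the same as the paper's, and your case (i) for strict transforms agrees with the paper's identity \eqref{onee}. But your case (ii) goes astray. The paper's key step, equation \eqref{twoo}, is that for a projective-bundle component $\mathbb{P}_{W^b_V}(N)\subset g^{-1}(V)$ sitting over an irreducible component $W^b_V$ of $X^a_V\cap Z$ (with $X^a_V\not\subset Z$), the kernel condition at $\mathbb{P}_{\widetilde{W^b_V}}(N)$, when restricted to the summand $\pi^*P'^{\leq i}_f$, is \emph{implied by} the kernel condition at the strict-transform resolution $\widetilde{X^a_V}$ — concretely, one writes $\tau^b_V$ as $\widetilde{W^b_V}\xrightarrow{\mu}\widetilde{X^a_V}\xrightarrow{\tau^a_V}X$ and applies Proposition \ref{smpr} (to identify the condition on $\mathbb{P}_{\widetilde{W^b_V}}(N)$ with the condition on $\widetilde{W^b_V}$) and Proposition \ref{pullback} (to $\mu$, so that the $\widetilde{X^a_V}$-kernel maps into the $\widetilde{W^b_V}$-kernel, shifted correctly by codimension). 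Thus these components contribute no new constraint on the $\pi^*$-summand, and the intersection of kernels for $g$ on $\pi^*P'^{\leq i}_f$ equals $\pi^*P^{\leq i}_f$.

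You never make this redundancy observation. Instead you assert that the degree-zero projective-bundle piece of $\tau^{a*}_V z$ ``encodes the kernel condition for $y$ at the component $X^b_V=W'\subseteq Z$, when such a component exists.'' That caveat is the gap: when $W'$ is a proper component of $X^a_V\cap Z$ that is \emph{not} a component of $f^{-1}(V)$ — the generic situation — there is no kernel condition for $y$ indexed by $W'$ in the definition of $P^{\leq i}_f$, so your bookkeeping has no place to send this contribution, and the claimed separation of the condition on $x=\pi^*y+z$ does not close. This is exactly where \eqref{twoo} is needed. (Relatedly, your appeal to the excess intersection identity \eqref{eul} is not used in the paper's proof of this proposition at all; it belongs to the proof of Theorem \ref{functorialityP}, and invoking it here is a symptom of overcomplicating the step that \eqref{twoo} dispatches directly.) To repair the argument, replace your case (ii) with: for each component $W^b_V$ of $X^a_V\cap Z$, factor $\tau^b_V$ through $\widetilde{X^a_V}$ via $\mu$, apply Proposition \ref{pullback} to $\mu$ and Proposition \ref{smpr} to the projective-bundle projection, and conclude that the $\mathbb{P}_{\widetilde{W^b_V}}(N)$-condition on $\pi^*P'^{\leq i}_f$ is subsumed in the $\widetilde{X^a_V}$-condition, hence the intersection over $B_V$ agrees with the intersection over $A_V$.
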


\begin{proof}
We use the notation from Subsection \ref{filtPP}. For $V\subsetneqq S$, let $A_V$ be the set of irreducible components of $f^{-1}(V)$. Let $X^a_V$ be such a component.

If $X^a_V\subset Z$, then there is only one irreducible component $Y^a_V=\mathbb{P}_{X^a_V}(N)$ of $g^{-1}(V)$ above it. 

If $X^a_V$ is not in $Z$, then there is one component $Y^a_V$ of $g^{-1}(V)$ birational to $X^a_V$. The other components are $\mathbb{P}_{W^b_V}(N)$, where $W^b_V$ is an irreducible component of $X^a_V\cap Z$. Denote by $B_a$ the set of such components.   
For $a\in A$ and $b\in B_a$, consider resolutions of singularities $r$ such that
\begin{equation*}
    \begin{tikzcd}
    \widetilde{Y^a_V}\arrow[d]\arrow[r, "r"]& Y^a_V\arrow[d]\arrow[r]&Y\arrow[d]&\\
    \widetilde{X^a_V}\arrow[r, "r"]&X^a_V\arrow[r]&X\\
     &X^a_V\cap Z\arrow[u, hook]\\
     \widetilde{W^b_V}\arrow[r, "r"]\arrow[uu]&W^b_V.\arrow[u, hook]
    \end{tikzcd}
\end{equation*}
Denote by $\tau$ maps as in Subsection \ref{filtPP}, for example $\tau^a_V:\widetilde{X^a_V}\to X$, and by $\mu$ the map
\begin{equation}\label{mumaps}
    \tau^b_V: \widetilde{W^b_V}\xrightarrow{\mu}\widetilde{X^a_V}\xrightarrow{\tau^a_V}X.
\end{equation}
We consider the proper maps
\begin{align*}
\widetilde{f^a_V}&: \widetilde{X^a_V}\to X^a_V\to V\\
\widetilde{g^a_V}&: \widetilde{Y^a_V}\to Y^a_V\to V\\
\widetilde{f^b_V}&: \widetilde{W^b_V}\to W^b_V\to V\\
\widetilde{g^b_V}&: \mathbb{P}_{\widetilde{W^b_V}}(N)\to
\mathbb{P}_{W^b_V}(N)\to V.
\end{align*}
Denote by 
\begin{align*}
    &c^a_V=\text{codim}\left(X^a_V\text{ in }X\right)=\text{codim}\left(Y^a_V\text{ in }Y\right)\\
    &c^b_V=\text{codim}\left(W^b_V\text{ in }X\right)\\
    &c'^b_V=\text{codim}\left(\mathbb{P}_{W^b_V}(N)\text{ in }Y\right)
\end{align*} the codimensions as in Subsection \ref{filtPP}. By Proposition \ref{smpr}, we have that
\begin{multline}\label{onee}
\text{ker}\left(\tau^{a*}_V:\pi^*P'^{\leq i}_f\text{gr}^\cdot K_\cdot(X)\to P'^{>i+c^a_V}_{\widetilde{g^a_V}}\text{gr}^\cdot K_\cdot\left(\widetilde{Y^a_V}\right)\right)\cong\\
\text{ker}\left(\tau^{a*}_V:P'^{\leq i}_f\text{gr}^\cdot K_\cdot(X)\to P'^{>i+c^a_V}_{\widetilde{f^a_V}}\text{gr}^\cdot K_\cdot\left(\widetilde{X^a_V}\right)\right).
\end{multline}
By Proposition \ref{smpr} and Proposition \ref{pullback} for the map $\mu$ in \eqref{mumaps}, we have that
\begin{multline}\label{twoo}
\text{ker}\left(\tau^{b*}_V:\pi^*P'^{\leq i}_f\text{gr}^\cdot K_\cdot(X)\to P'^{>i+c'^b_V}_{\widetilde{g^b_V}}\text{gr}^\cdot K_\cdot\left(\mathbb{P}_{\widetilde{W^b_V}}(N)\right)\right)\cong\\
\text{ker}\left(\tau^{b*}_V:P'^{\leq i}_f\text{gr}^\cdot K_\cdot(X)\to P'^{>i+c^b_V}_{\widetilde{f^b_V}}\text{gr}^\cdot K_\cdot\left(\widetilde{W^b_V}\right)\right)\supset\\
\text{ker}\left(\tau^{a*}_V:P'^{\leq i}_f\text{gr}^\cdot K_\cdot(X)\to P'^{>i+c^a_V}_{\widetilde{f^a_V}}\text{gr}^\cdot K_\cdot\left(\widetilde{X^a_V}\right)\right).
\end{multline}
Let $B_V$ be the set of irreducible components of $g^{-1}(V)$. For $d\in B_V$, denote by $\widetilde{g^d_V}:\widetilde{Y^d_V}\to V$ and
let $c^d_V:=\text{codim}\left(Y^d_V\text{ in }Y\right)$. We have that $B_V=A\cup \bigcup_{a\in A}B_a.$
The statements in \eqref{onee} and \eqref{twoo} imply that
\begin{align*}
\pi_*&: \bigcap_{V\subsetneqq S}\bigcap_{d\in B_V}\text{ker}\left(\tau^{d*}_V:\pi^*P'^{\leq i}_f\text{gr}^\cdot K_\cdot(X)\to P'^{>i+c^d_V}_{\widetilde{g^d_V}}\text{gr}^\cdot K_\cdot\left(\widetilde{Y^d_V}\right)\right)\cong\\
&\bigcap_{V\subsetneqq S}\bigcap_{a\in A_V}\text{ker}\left(\tau^{a*}_V:P'^{\leq i}_f\text{gr}^\cdot K_\cdot(X)\to P'^{>i+c^a_V}_{\widetilde{f^a_V}}\text{gr}^\cdot K_\cdot\left(\widetilde{X^a_V}\right)\right).
\end{align*}
Using Corollary \ref{blowup2}, we obtain that 
\[P^{\leq i}_{g}\, \text{gr}^\cdot K_{Y'}(Y)=\pi^*P^{\leq i}_{f}\,\text{gr}^\cdot K_{X'}(X)\oplus P^{\leq i}_{g}\,\text{gr}^{\cdot} K_{E'}(Y)^0.\]
The analogous statement for $\P^{\leq i}$ follows similarly.
\end{proof}

\begin{proof}[Proof of Theorem \ref{intK}]
Any two such varieties $f:X\to S$ and $f':X'\to S$ are birational, so by \cite{akmw} there is a smooth variety $W$ such that 
\begin{equation*}
    \begin{tikzcd}
     & W\arrow[dl,"\pi"']\arrow[dr,"\pi'"]& \\
     X\arrow[dr,"f"']& &X'\arrow[dl,"f'"]\\
      &S&
    \end{tikzcd}
\end{equation*}
and the maps $\pi$ and $\pi'$ are successive blow-ups along smooth subvarieties of $X$ and $X'$, respectively. It thus suffices to show that
\begin{equation}\label{eqq}
P^{\leq 0}_f\text{gr}^\cdot K_\cdot(X)\Big/\left(\widetilde{P}^{\leq 0}_f\text{gr}^\cdot K_\cdot(X)\cap \text{ker}\,f_*\right)
\cong
P^{\leq 0}_g\text{gr}^\cdot K_\cdot(Y)\Big/\left(\widetilde{P}^{\leq 0}_g\text{gr}^\cdot K_\cdot(Y)\cap \text{ker}\,g_*\right),
\end{equation}
where $\pi:Y\to X$ is the blow up along smooth subvariety $Z\hookrightarrow X$ and
\begin{equation*}
    \begin{tikzcd}
    Y\arrow[dr, "g"']\arrow[r,"\pi"]& X\arrow[d,"f"]\\
    & S.
    \end{tikzcd}
\end{equation*}
By Proposition \ref{blowup}, we have that
\begin{align*}
    P^{\leq i}_g \text{gr}^\cdot K_\cdot(Y) & =\pi^*P^{\leq i}_f\text{gr}^\cdot K_\cdot(X)\oplus P^{\leq i}_g\text{gr}^\cdot K_E(Y)^0\\
     \widetilde{P}^{\leq i}_g \text{gr}^\cdot K_\cdot(Y) & =\pi^*\widetilde{P}^{\leq i}_f\text{gr}^\cdot K_\cdot(X)\oplus P^{\leq i}_g\text{gr}^\cdot K_E(Y)^0.
\end{align*}
The map $\pi^*$ is injective.
Taking the quotients we thus obtain the isomorphism \eqref{eqq}. The analogous statement for $\I K$ is similar.
\end{proof}

\subsection{Cycle map for intersection $K$-theory}\label{cyclemap2}

Let $S$ be a variety, let $U$ be an open subset, let $f:X\to S$ be such $f^{-1}(U)\to U$ is étale, and let $L=f_*\left(\mathbb{Z}_{f^{-1}(U)}\right)$.

\begin{prop}
The cycle map $\text{ch}:\text{gr}^j K_0(X)_\mathbb{Q}\to H^{2j}(X)$ induces cycle maps independent of the map $f:X\to S$ as in Subsection \ref{defintK}:
\begin{align*}
\mathfrak{c}&: \text{gr}^j IK_0(S,L)_\mathbb{Q}\to IH^{2j}(S,L\otimes \mathbb{Q})\\
\mathfrak{c}&: \text{gr}^j \I K_0(S,L)_\mathbb{Q}\to IH^{2j}(S,L\otimes \mathbb{Q}).
\end{align*}
\end{prop}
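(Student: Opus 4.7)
The plan is to lift the identification
\[IH^\cdot(S, L\otimes\mathbb{Q}) \cong {}^pH^{\leq 0}_f H^\cdot(X) / {}^p\widetilde{H}^{\leq 0}_f H^\cdot(X)\]
from Subsection \ref{coh}, which is the hypercohomological incarnation of \eqref{comp1}, along the cycle map $\mathfrak{c}$ to produce the desired maps on intersection $K$-theory. The two things I need to verify are that $\mathfrak{c}$ sends $P^{\leq 0}_f\,\text{gr}^\cdot K_0(X)_\mathbb{Q}$ into ${}^pH^{\leq 0}_f(X)$ and that it sends the subspace $\widetilde{P}^{\leq 0}_f\,\text{gr}^\cdot K_0(X)_\mathbb{Q}$ into ${}^p\widetilde{H}^{\leq 0}_f(X)$. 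Once both are shown, the cycle map descends to a well-defined map on the quotient, and the kernel of $f_*$ on the $\widetilde{P}$ part automatically maps to zero modulo ${}^p\widetilde{H}^{\leq 0}_f(X)$.

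The first containment is Proposition \ref{cyclemap}. For the second, by construction and Theorem \ref{functorialityP}, $\widetilde{P}^{\leq 0}_f\,\text{gr}^\cdot K_\cdot(X)$ is the sum of the images $\tau^a_{V*}\bigl(P^{\leq -c^a_V}_{\widetilde{f^a_V}}\,\text{gr}^\cdot K_\cdot(\widetilde{X^a_V})\bigr)$ over $V\subsetneqq S$ and $a\in A_V$, where $\tau^a_V=\iota^a_V\pi^a_V$. Applying $\mathfrak{c}$ and using its compatibility with proper pushforward, together with Proposition \ref{cyclemap} for the maps $\widetilde{f^a_V}$, the image lies in $\tau^a_{V*}\bigl({}^pH^{\leq -c^a_V}_{\widetilde{f^a_V}}(\widetilde{X^a_V})\bigr)$, which is precisely one of the pieces that define ${}^p\widetilde{H}^{\leq 0}_f(X)$ in Subsection \ref{coh}. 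Hence $\mathfrak{c}$ factors through the quotient, killing $\widetilde{P}^{\leq 0}_f \cap \ker f_*$, and yields $\mathfrak{c}: \text{gr}^j IK_0(S,L)_\mathbb{Q} \to IH^{2j}(S, L\otimes\mathbb{Q})$. The $\I K$ variant is verbatim the same argument using $\P^{\leq 0}_f\subset P^{\leq 0}_f$ and $\widetilde{\P}^{\leq 0}_f\subset \widetilde{P}^{\leq 0}_f$.

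For independence of $f$, the source is already shown to be independent of $f$ in Theorem \ref{intK}, while the target $IH^\cdot(S, L\otimes\mathbb{Q})$ is intrinsic to the pair $(S,L)$. By the weak factorization theorem \cite{akmw} used there, it suffices to compare the cycle maps built from $f:X\to S$ and from $g=f\pi:Y\to S$ for $\pi:Y\to X$ a single blow-up along a smooth subvariety. Proposition \ref{blowup} yields the decompositions
\[P^{\leq 0}_g\,\text{gr}^\cdot K_\cdot(Y) = \pi^*P^{\leq 0}_f\,\text{gr}^\cdot K_\cdot(X)\oplus P^{\leq 0}_g\,\text{gr}^\cdot K_E(Y)^0,\]
and likewise for $\widetilde{P}^{\leq 0}$, together with the fact that the exceptional summand $P^{\leq 0}_g\,\text{gr}^\cdot K_E(Y)^0$ lies in $\widetilde{P}^{\leq 0}_g$. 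Since the cycle map intertwines $\pi^*$, and since the analogous splitting holds at the level of constructible sheaves (the extra summands of $Rg_*IC_Y$ relative to $Rf_*IC_X$ have non-full support and are absorbed into ${}^p\widetilde{H}^{\leq 0}_g$), both induced maps on $\text{gr}^j IK_0(S,L)_\mathbb{Q}$ agree.

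The main obstacle I anticipate is the passage from the sheaf-level isomorphism \eqref{comp1} to the corresponding equality of subquotients of $H^\cdot(X)$, particularly in the twisted case $L\otimes\mathbb{Q}$. In general the distinguished triangle ${}^p\widetilde{\tau}^{\leq 0}Rf_*IC_X \to {}^p\tau^{\leq 0}Rf_*IC_X \to IC_S(L)\xrightarrow{[1]}$ produces a long exact sequence on hypercohomology, not a short one. The saving fact is that the decomposition theorem realizes $IC_S(L)$ as a direct summand of $Rf_*IC_X$, so this triangle splits and taking hypercohomology preserves the quotient; this is exactly what underlies the isomorphism asserted in Subsection \ref{coh}. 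The remaining bookkeeping --- tracking the degree $2j$ on the cohomology side, landing in the algebraic part as in Proposition \ref{cyclemap}, and checking injectivity of $\pi^*$ in the blow-up comparison --- is routine.
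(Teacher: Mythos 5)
Your overall strategy — push $\mathfrak{c}$ through the subquotient defining $\text{gr}^\cdot IK_0$, landing in the subquotient ${}^pH^{\leq 0}_f/{}^p\widetilde{H}^{\leq 0}_f$ that computes $IH^\cdot(S,L\otimes\mathbb{Q})$ via \eqref{comp1} — is the same as the paper's. The independence argument and the observation that the triangle in \eqref{comp1} splits on hypercohomology are also correct. However, there is a genuine gap in your treatment of the second containment.

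You assert that ``by construction and Theorem \ref{functorialityP}, $\widetilde{P}^{\leq 0}_f\,\text{gr}^\cdot K_\cdot(X)$ is the sum of the images $\tau^a_{V*}\bigl(P^{\leq -c^a_V}_{\widetilde{f^a_V}}\text{gr}^\cdot K_\cdot(\widetilde{X^a_V})\bigr)$.'' That is not what $\widetilde{P}^{\leq 0}_f$ is. In Subsection \ref{defintK} it is defined as the image of $\bigoplus_{V,a}P^{\leq 0}_f\text{gr}^\cdot K_{X^a_V}(X)$, i.e.\ perverse-filtered $K$-theory \emph{with supports} on the components $X^a_V$, not pushforwards from their resolutions. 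Theorem \ref{functorialityP} gives $\tau^a_{V*}\colon P^{\leq -c^a_V}_{\widetilde{f^a_V}}\text{gr}_\cdot K_\cdot(\widetilde{X^a_V})\to P^{\leq 0}_f\text{gr}_\cdot K_\cdot(X)$, so those pushforward images do land in $P^{\leq 0}_f$, but no result in the paper identifies them with (or shows they exhaust) the filtered supported $K$-groups $P^{\leq 0}_f\text{gr}^\cdot K_{X^a_V}(X)$; the nontrivial direction — that every class of $P^{\leq 0}_f\text{gr}^\cdot K_{X^a_V}(X)$ factors through some $\tau^a_{V*}$ with the stated perverse bound — is exactly the kind of statement that would need a proof. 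As written, you have only shown that a possibly proper subspace of $\widetilde{P}^{\leq 0}_f$ is sent by $\mathfrak{c}$ into ${}^p\widetilde{H}^{\leq 0}_f(X)$.

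The paper avoids this by working with the supports directly: it defines the cohomological analogue $\widetilde{P}^{\leq 0}_f H^\cdot(X)$ as the image of $\bigoplus_{V,a} P'^{\leq 0}_f H^\cdot_{X^a_V}(X)$ inside $P^{\leq 0}_fH^\cdot(X)$, observes (as in Proposition \ref{cyclemap}) that the same correspondence $\Gamma$ supported on $T\times_S X^a_V$ produces on the sheaf side a map whose image consists of summands of $Rf_*IC_X$ supported in $f(X^a_V)\subsetneqq S$, and concludes $\widetilde{P}^{\leq 0}_fH^{2j}(X)\hookrightarrow{}^p\widetilde{H}^{\leq 0}_f(X)$. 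To repair your argument, replace the unproven pushforward description of $\widetilde{P}^{\leq 0}_f$ with this direct ``cycle map with supports'' argument: the cycle map sends $\text{gr}^\cdot K_{X^a_V}(X)_\mathbb{Q}$ into $H^\cdot_{X^a_V}(X)$, and the support bound on $\Gamma$ together with the perverse-degree estimate from the proof of Proposition \ref{cyclemap} shows the resulting cohomology classes lie among summands of non-full support, hence in ${}^p\widetilde{H}^{\leq 0}_f(X)$.
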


\begin{proof}
Define $P'^{\leq i}_fH^\cdot_{X^a_V}(X)$ as in Subsection \ref{Kper} and denote by 
\[\widetilde{P}^{\leq 0}_fH^\cdot(X):=
\text{image}\left(\bigoplus_{V\subsetneqq S}\bigoplus_{a\in A_T}P'^{\leq i}_f H^\cdot_{X^a_V}(X)\to H^\cdot(X)\right)\cap P^{\leq 0}_fH^\cdot(X).\]
Denote by ${}^p\widetilde{H}^{\leq 0}_f(X)\subset {}^pH^{\leq 0}_f(X)$ the sum of summands of ${}^p\tau^{\leq 0}Rf_*IC_X$ with support strictly smaller than $S$. 
By Proposition \ref{cyclemap}, the cycle map respects the perverse filtrations in $K$-theory and cohomology
\begin{align*}
\mathfrak{c}&:P^{\leq 0}_f\text{gr}^jK_0(X)_\mathbb{Q}\to P^{\leq 0}_fH^{2j}(X)\hookrightarrow {}^pH^{\leq 0}_f(X)\\
\mathfrak{c}&:\widetilde{P}^{\leq 0}_f\text{gr}^jK_0(X)_\mathbb{Q}\to \widetilde{P}^{\leq 0}_fH^{2j}(X)\hookrightarrow {}^p\widetilde{H}^{\leq 0}_f(X).
\end{align*}
Taking the quotient and using \eqref{comp1}, we obtain a map
\[\mathfrak{c}: \text{gr}^j IK_0(S,L)_\mathbb{Q}\to IH^{2j}(S,L\otimes\mathbb{Q}).\]
The proof that the above cycle map is independent of the map $f$ chosen follows as in Theorem \ref{intK}. The argument for $\I K$ is similar.
\end{proof}

\subsection{Further properties of intersection $K$-theory}\label{multK}

Intersection cohomology satisfies the following properties, the second one explaining its name \cite[Motivation]{CH1}:
\begin{itemize}
\item The natural map $H^i(S)\to H^{\text{BM}}_{2d-i}(S)$ factors through \[H^i(S)\to IH^i(S)\to H^{\text{BM}}_{2d-i}(S).\]


\item There is a natural intersection map \[IH^i(S)\otimes IH^j(S)\to H^{\text{BM}}_{2d-i-j}(S)\] which is non-degenerate for $i+j=2d$.
\end{itemize}

We prove analogous, but weaker versions of the above properties in $K$-theory. 


\begin{prop}\label{propIK}
(a) There are natural maps
\begin{align*}
\text{gr}^i IK_\cdot(S)&\to \text{gr}_{d-i} G_\cdot(S)\\
\text{gr}^i \I K_\cdot(S)&\to \text{gr}_{d-i} G_\cdot(S).
\end{align*}


(b) There are natural intersection maps \begin{align*}
\text{gr}^i IK_\cdot(S)\otimes \text{gr}^j IK_\cdot(S)&\to \text{gr}_{d-i-j} G_\cdot(S)\\
\text{gr}^i \I K_\cdot(S)\otimes \text{gr}^j \I K_\cdot(S)&\to \text{gr}_{d-i-j} G_\cdot(S).
\end{align*}
\end{prop}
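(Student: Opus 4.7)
The plan is to induce both maps from the resolution of singularities $f:X\to S$, using the proper pushforward for part (a) and combining it with the ring structure on $K_\cdot(X)$ for part (b), then to verify descent through the quotients defining $\text{gr}^\cdot IK_\cdot(S)$ and $\text{gr}^\cdot\I K_\cdot(S)$.

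For part (a), since $X$ is smooth of dimension $d$ we have $\text{gr}^i K_\cdot(X)=\text{gr}_{d-i}G_\cdot(X)$, and the proper pushforward $f_*:\text{gr}_{d-i}G_\cdot(X)\to\text{gr}_{d-i}G_\cdot(S)$ is the map of Subsection \ref{fil}. Restricted to $P^{\leq 0}_f\text{gr}^i K_\cdot(X)$ it tautologically kills $\widetilde{P}^{\leq 0}_f\text{gr}^i K_\cdot(X)\cap\text{ker}\,f_*$ and therefore descends to $\text{gr}^i IK_\cdot(S)\to\text{gr}_{d-i}G_\cdot(S)$; the argument for $\I K$ is verbatim the same. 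Independence of the chosen resolution follows as in Theorem \ref{intK}: one reduces to comparing $f$ with $g=f\pi$ for $\pi:Y\to X$ the blowup along a smooth center and uses $f_*\pi_*=g_*$.

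For part (b), lift $\bar x\in\text{gr}^i IK_\cdot(S)$ and $\bar y\in\text{gr}^j IK_\cdot(S)$ to $\tilde x\in P^{\leq 0}_f\text{gr}^i K_\cdot(X)$ and $\tilde y\in P^{\leq 0}_f\text{gr}^j K_\cdot(X)$, form the intersection product $\tilde x\cdot\tilde y\in\text{gr}^{i+j}K_\cdot(X)=\text{gr}_{d-i-j}G_\cdot(X)$ using the ring structure on $K_\cdot(X)$, and push forward to $\text{gr}_{d-i-j}G_\cdot(S)$. Well-definedness amounts to showing $f_*(z\cdot\tilde y)=0$ in $\text{gr}_{d-i-j}G_\cdot(S)$ whenever $z\in\widetilde{P}^{\leq 0}_f\cap\text{ker}\,f_*$. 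Writing $z=\sum\iota^a_{V*}(z^a_V)$ with $V\subsetneqq S$, the projection formula gives
\[f_*(z\cdot\tilde y)=\sum_{V,a}(f\iota^a_V)_*\left(z^a_V\cdot\iota^{a*}_V\tilde y\right),\]
supported on the subvarieties $V$. When $\dim V<d-i-j$ the summand lies in $F^{\text{dim}}_{d-i-j-1}G_\cdot(S)$ and vanishes in the associated graded. In the borderline case $\dim V\geq d-i-j$ I would pass to the resolution $\pi^a_V:\widetilde{X^a_V}\to X^a_V$, apply Theorem \ref{functorialityP} to place $\tau^{a*}_V\tilde y$ in the appropriate level of the perverse filtration on $\widetilde{X^a_V}$, and combine this with the hypothesis $f_*(z)=0$ to cut the support of the pushforward down below dimension $d-i-j$. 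The construction for $\I K$ is parallel, using Theorem \ref{functorP} in place of Theorem \ref{functorialityP}.

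The main obstacle is the descent in part (b) in the borderline dimension regime: exceptional classes like $[E]$ for an $f$-exceptional divisor $E$ can become zero in $\text{gr}^1 IK_\cdot(S)$ while having nonzero naive self-intersection, so the bookkeeping between the perverse filtration on $\widetilde{X^a_V}$ and the codimension filtration on $X$ must conspire to kill such contributions. I expect this step to require a careful joint use of the hypothesis $f_*(z)=0$ and of the perverse-filtration constraint on $\tilde y$, possibly together with a refinement of the class of admissible lifts analogous to the use of the decomposition-theorem splitting in the cohomological version.
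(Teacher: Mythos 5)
Part (a) of your proposal is essentially verbatim the paper's argument: the identification $\text{gr}^i K_\cdot(X)=\text{gr}_{d-i}G_\cdot(X)$, the proper pushforward $f_*$, and the tautological vanishing on $\ker f_*$. No issues there.

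For part (b) you set up the same construction the paper uses (lift to $P^{\leq 0}_f\text{gr}^\cdot K_\cdot(X)$, take the cup product via $\Delta_X^*$, push forward by $f_*$), and you correctly observe that the real content is the descent through $\widetilde{P}^{\leq 0}_f\cap\ker f_*$. Here the paper's one-line justification is the asserted identity $f_*\Delta^*=\Delta^*(f_*\boxtimes f_*)$; but this would say $f_*$ is a ring homomorphism for the cup product (the projection formula only gives $f_*(f^*a\cdot b)=a\cdot f_*b$), and $\Delta_S^*$ is not even defined on $G$-theory when $S$ is singular. So the obstacle you flag in your last paragraph is not something the paper's proof resolves — it is a genuine gap in that proof, and your surface example is exactly the right test: for a resolution of an $A_1$-singular surface with exceptional $(-2)$-curve $E$, the class $[E]\in\widetilde{\P}^{\leq 0}_f\text{gr}^1 K_0(X)\cap\ker f_*$ (Subsection \ref{computations}), hence $[E]=0$ in $\text{gr}^1 IK_0(S)$, yet $f_*([E]\cdot[E])=-2[\text{pt}]\neq 0$ in $\text{gr}_0 G_0(S)$; moreover $P^{\leq 0}_f\text{gr}^\cdot K_0(X)=\text{gr}^\cdot K_0(X)$ here since $f$ is semismall, so the perverse constraint on the other factor gives you nothing extra, and your proposed projection-formula bookkeeping cannot save the borderline dimension $\dim V=d-i-j$. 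The correct repair, as you guess at the end, is most likely to work with a \emph{distinguished} lift (a splitting à la the decomposition theorem, as in de Cataldo--Migliorini), rather than an arbitrary one, since the cohomological pairing $IH^i\otimes IH^j\to H^{\text{BM}}_{2d-i-j}$ really relies on self-duality of $IC_S$ and does not descend from arbitrary lifts to $H^\cdot(X)$ either.
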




\begin{proof}
Let $f:X\to S$ be a resolution of singularities. We discuss the claims for $IK_\cdot$, the ones for $\I K_\cdot$ are similar. We construct maps as above using $f$. They are independent by $f$ by an argument as in Theorem \ref{intK}. 

(a) There is a natural map
$\text{gr}^i K_\cdot(X)=\text{gr}_{d-i}G_\cdot(X)\xrightarrow{f_*}\text{gr}_{d-i}G_\cdot(S),$ and we thus obtain a map
\[\text{gr}^iIK_\cdot(S)=
P^{\leq 0}_f\text{gr}^iK_\cdot(X)\big/ \left(\widetilde{P}^{\leq 0}_f\text{gr}^iK_\cdot(X)\cap \text{ker}\,f_*\right)\to \text{gr}_{d-i}G_\cdot(S).\]

(b) Consider the composite map
\[P^{\leq 0}_f\text{gr}^i K_\cdot(X)\boxtimes P^{\leq 0}_f\text{gr}^j K_\cdot(X)\to \text{gr}^{i+j}K_\cdot(X\times X)\xrightarrow{\Delta^*}\text{gr}^{i+j}K_\cdot(X)\xrightarrow{f_*}\text{gr}_{d-i-j}G_\cdot(S).\]
The subspaces \begin{align*}
&\left(\widetilde{P}^{\leq 0}_f\text{gr}^iK_\cdot(X)\cap \text{ker}\,f_*\right)\boxtimes P^{\leq 0}_f\text{gr}^iK_\cdot(X)\\
&P^{\leq 0}_f\text{gr}^iK_\cdot(X)\boxtimes \left(\widetilde{P}^{\leq 0}_f\text{gr}^iK_\cdot(X)\cap \text{ker}\,f_*\right)
\end{align*}
are in the kernel of $f_*\Delta^*=\Delta^*\left(f_*\boxtimes f_*\right)$. We thus obtain the desired map. 
\end{proof}

\subsection{Computations of intersection $K$-theory}\label{computations}
\subsubsection{} If $S$ is smooth, then $\text{gr}^\cdot IK_\cdot(S)=\text{gr}^\cdot \I K_\cdot(S)=
\text{gr}^\cdot K_\cdot(S).$

\subsubsection{} 
Let $f:X\to S$ be a small resolution of singularities. Then
\[\text{gr}^\cdot \I K_0(S)=\text{gr}^\cdot K_0(X).\]

Let $T\xrightarrow{\pi}S$ be a generically surjective finite map from $T$ smooth. By Proposition \ref{lb}, $\text{gr}_{\dim X}K^q_{T\times_SX}(T\times X)$ is generated by the irreducible components of $T\times_SX$ dominant over $S$. This means that the cycles in $\text{gr}_{a}K^q_{T\times_SX}(T\times X)$ supported on the exceptional locus have $a<\dim X$, and thus they only contribute in perverse degrees $\geq 1$, see \eqref{ranges}.

Next, say that $T\xrightarrow{\pi}S$ has image $V\subsetneqq S$. Let $[\Gamma]\in \text{gr}_{\dim X-a}K^q_{T\times_SX}(T\times X)$. By Proposition \ref{lb}, $a\leq \dim X-\dim X_V$. 
If it contributes in perverse degree $i$, then
\[\left\lfloor  \frac{i+\dim X-\dim V}{2}\right\rfloor \geq \dim X-\dim X_V,\]
and thus that 
\[i\geq \dim X+\dim V-2\dim X_V\geq 1.\]
This means that $\widetilde{\P}^{\leq 0}_f\KK=0$. By Theorem \ref{sd}, $\P^{\leq 0}_f\text{gr}^\cdot K_0(X)=\text{gr}^\cdot K_0(X)$, and thus $\text{gr}^\cdot \I K_0(S)=\text{gr}^\cdot K_0(X).$

\subsubsection{}
Let $S$ be a surface. Consider a resolution of singularities $f:X\to S$. Let $B$ be the set of singular points of $S$. For each $p$ in $B$, let 
$A_p=\{C_p^a\}$ be the set of irreducible components of $X_p:=f^{-1}(p)$. For each such curve, consider the diagram
\begin{equation*}
    \begin{tikzcd}
    C_p^a\arrow[d,"h^a_p"']\arrow[r, hook, "g^a_p"]&X\arrow[d,"f"]\\
    p\arrow[r, hook]& S.
    \end{tikzcd}
\end{equation*}
Consider the maps 
\begin{align*}
    &m^a_p:=g^a_{p*}h^{a*}_p:K_\cdot(p)\to \text{gr}^1K_\cdot(X)\\
    &\Delta^a_p:=h^a_{p*}g^{a*}:\text{gr}^1K_\cdot(X)\to K_\cdot(p).
\end{align*}
We claim that
\[\widetilde{\P}^{\leq 0}_f\KK=
\text{image}\left(\bigoplus_{p\in B}\bigoplus_{a\in A_p} m^a_p: K_\cdot(p)\to \text{gr}^1K_\cdot(X)
\right).\] 

The correspondences which contribute to $\widetilde{\P}^{\leq 0}_f$ are in $\text{gr}_{2-s} K^q_{T\times_SX}(T\times X)$ for $\pi: T\to S$ a generically finite map onto its image $V\subsetneqq S$ with $T$ smooth. 
By Proposition \ref{lb},
\[\left\lfloor \frac{2-\dim V}{2}\right\rfloor\geq s\geq \dim X-\dim X_V.\]
So the map $T\to S$ is the inclusion of a point $p\hookrightarrow S$ for $p\in B$ and $\Gamma$ is in  $\text{gr}_1G_{X_p}(X)$. Further, for $p, q\in B$, $a\in A_p$, $b\in A_q$:
\[\Delta^b_qm^a_p=\delta_{pq}\delta_{ab}\,\text{id}.\]
This means that:
\[\bigoplus_{p\in B}\bigoplus_{a\in A_p}m^a_{p}:\bigoplus_{p\in B} K_\cdot(p)^{|A_p|}\cong \widetilde{\P}^{\leq 0}_f\text{gr}^1K_\cdot(X).\]
The map $f$ is semismall, so by Theorem \ref{sd} we obtain a form of the decomposition theorem for the map $f$:
\[\text{gr}^\cdot K_0(X)\cong\text{gr}^\cdot \I K_0(S)\oplus\bigoplus_{p\in B} K_0(p)^{|A_p|}.\]
See Section \ref{semismall} for further discussions of the decomposition theorem for semismall maps.

\subsubsection{}\label{surface}
Let $Y$ be a smooth projective variety of dimension $d$ and let $\mathcal{L}$ be a line bundle on $Y$. Consider the cone $S=C_Y\mathcal{L}$ and its resolution of singularities
\[X:=\text{Tot}_Y\mathcal{L}\xrightarrow{f} S.\]
Let $o$ be the vertex of the cone $X$. There is only one fiber with nonzero dimension
\begin{equation*}
    \begin{tikzcd}
    Y\arrow[r, hook, "\iota"]\arrow[d,"g"]&X\arrow[d,"f"]\\
    o\arrow[r, hook]&S.
    \end{tikzcd}
\end{equation*}
Using the correspondence $X\cong\Delta\hookrightarrow X\times_SX$, we see that
\[P'^{\leq 0}_f\KK=\KK.\]

For $V\subsetneqq S$, the irreducible components of $f^{-1}(V)$ are $f_V:W\to V$ birational to $V$ and, if $V$ contains $o$, the fiber $Y$. As above, we have that $P'^{\leq 0}_{f_V}\text{gr}^\cdot G_\cdot(W)=\text{gr}^\cdot G_\cdot(W)$, so the conditions in defining $P^{\leq i}_f$ are automatically satisfied for these irreducible components $W$. We thus have that
\[P^{\leq 0}_f\text{gr}^\cdot K_\cdot(X)=\text{ker}\left(\iota^*:\text{gr}^\cdot K_\cdot(X)\to P'^{>1}_g\text{gr}^\cdot K_\cdot(Y) \right).\]
By the computation in Subsection \ref{oneex}, \[P'^{>1}_g\text{gr}^j K_\cdot(Y)=
\begin{cases} \mbox{$\text{gr}^jK_\cdot(Y)$} & \mbox{if } j> \lfloor \frac{d+1}{2} \rfloor, \\ \mbox{$0$} & \mbox{otherwise.} \end{cases}\]
The map $\iota^*: \text{gr}^jK_\cdot(X)\to \text{gr}^jK_\cdot(Y)$ is an isomorphism, so we have that
\[P^{\leq 0}_f\text{gr}^jK_\cdot(X)=
\begin{cases} \mbox{$\text{gr}^jK_\cdot(Y)$} & \mbox{if } j\leq \lfloor \frac{d+1}{2} \rfloor, \\ \mbox{$0$} & \mbox{otherwise.} \end{cases}\]
Further, $\widetilde{P}^{\leq 0}_f\text{gr}^\cdot K_\cdot(X)$ is generated by the cycles over $X_o\cong Y$ of codimension between $0$ and $\lfloor \frac{d-1}{2}\rfloor$. The map 
\[\iota_*:\text{gr}^i K_\cdot(Y)\to \text{gr}^{i+2} K_\cdot(X)\cong \text{gr}^{i+2} K_\cdot(Y)\] is multiplication by the class $\hbar:=c_1(\mathcal{L}|_Y)\in \text{gr}^2 K_0(Y)$.
As a vector space, we thus have that
\[ \text{gr}^j IK_\cdot(S)=
\begin{cases} \mbox{$\text{gr}^j K_\cdot(Y)\big/\hbar\,\text{gr}^{j-2} K_\cdot(Y)$} & \mbox{if } j\leq \lfloor \frac{d+1}{2} \rfloor, \\ \mbox{$0$} & \mbox{otherwise.} \end{cases} \]
The computation in cohomology is similar, see \cite[Example 2.2.1]{dCM2}.

\section{The decomposition theorem for semismall maps}
\label{semismall}

We will be using the notation from Subsection \ref{ssn}. For $a,b \in A$, we write $b<a$ if $S_b\subsetneqq S_a$. Denote by $\iota_{ba}:X_b\hookrightarrow X_a$.
For $a\in A$, define 
\[\widetilde{\P}^{\leq 0}_{f}\text{gr}^\cdot K_{X_a}(X)=\text{image}\left(\bigoplus_{b<a}\iota_{ba*}:\P^{\leq 0}_f\text{gr}^\cdot K_{X_b}(X)\to \P^{\leq 0}_f\text{gr}^\cdot K_{X_a}(X)\right).\]
First, we state a more precise version of Conjecture \ref{dss}. 

\begin{conj}\label{dss2}
Let $f:X\to S$ be a semismall map and consider $\{S_a|\,a\in I\}$ a stratification as in Subsection \ref{ssn}, denote by $A\subset I$ the set of relevant strata. For $a\in A$, consider generically finite maps $\pi_a: T_a\to S_a$ with $T_a$ is smooth such that $\pi_a^{-1}(S_a^o)\to S_a^o$ is smooth and $L_a\cong f_*\left(\mathbb{Z}_{S^o_a}\right)$. For each $a$, there exists a rational map $X_a\dashrightarrow T_a$, and let $\Gamma_a$ be the closure of its graph
\begin{equation*}
    \begin{tikzcd}
    \Gamma_a\arrow[d]\arrow[r]&X_a\arrow[r, hook,"\iota_a"]\arrow[d,"f_a"]&X\arrow[d,"f"]\\
    T_a\arrow[r,"\pi_a"]&S_a\arrow[r, hook]&S,
    \end{tikzcd}
\end{equation*}
The correspondence $\Gamma_a$ induces an isomorphism
\begin{multline}\label{topdegree}
\iota_{a*}\Phi_{\Gamma_a}: \P^{\leq 0}_{\pi_a}\text{gr}^{j-c_a} K_\cdot(T_a)_\mathbb{Q}\big/\widetilde{\P}^{\leq 0}_{\pi_a}\text{gr}^{j-c_a} K_\cdot(T_a)_\mathbb{Q}\cong\\ \iota_{a*}\left(\P^{\leq 0}_{f}\text{gr}^{j} K_{X_a}(X)_\mathbb{Q}\big/\widetilde{\P}^{\leq 0}_{f}\text{gr}^j K_{X_a}(X)_\mathbb{Q}\right)
\end{multline}
and a decomposition
\begin{align*}
\bigoplus_{a\in A} \text{gr}^{j-c_a} \I K_\cdot(S_a, L_a)_\mathbb{Q}&\cong\text{gr}^j K_\cdot(X)_\mathbb{Q}\\
(x_a)_{a\in A}&\mapsto \sum_{a\in A}\iota_{a*}\Phi_{\Gamma_a}(x_a).
\end{align*}
\end{conj}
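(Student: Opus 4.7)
The plan is to model the argument on the cohomological decomposition theorem for semismall maps of de Cataldo--Migliorini, exploiting the semismallness hypothesis to control the perverse filtration in $K$-theory. The starting point is Theorem \ref{sd}: since $f$ is semismall, the defect of semismallness is $s(f)=0$, so
\[\P^{\leq 0}_f\text{gr}^\cdot K_0(X)_\mathbb{Q}=\text{gr}^\cdot K_0(X)_\mathbb{Q}\quad\text{and}\quad \P^{\leq -1}_f\text{gr}^\cdot K_0(X)_\mathbb{Q}=0.\]
Thus the full $K$-theory of $X$ already lives in perverse degree $0$, and what remains is to decompose it according to the stratification.

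I would proceed by induction on the dimension of the relevant strata, ordered so that $\dim S_a$ is increasing. For the deepest stratum (a point) the map $\iota_{a*}\Phi_{\Gamma_a}$ is essentially pushforward along a resolution of $X_a$ and identifies the contribution of that stratum directly. For the induction step, use the filtration by closed subsets $X_{\leq k}:=f^{-1}\!\bigl(\bigcup_{\dim S_a\leq k}S_a\bigr)$ together with the localization sequence, so that the associated graded piece $\text{gr}^\cdot K_{X_{\leq k}}(X)\big/\text{gr}^\cdot K_{X_{\leq k-1}}(X)$ should be accounted for precisely by strata of dimension $k$. Functoriality of $\P^{\leq 0}$ from Theorem \ref{functorP}, applied to the correspondence $\Gamma_a$, gives a map
\[\Phi_{\Gamma_a}:\P^{\leq 0}_{\pi_a}\text{gr}^{j-c_a}K_\cdot(T_a)_\mathbb{Q}\to \P^{\leq 0}_f\text{gr}^{j}K_{X_a}(X)_\mathbb{Q},\]
where the dimension bookkeeping \eqref{ranges} works out because $c_a=\operatorname{codim}(X_a\subset X)$ matches the shift produced by $\iota_{a*}$. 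Cycles in $\widetilde{\P}^{\leq 0}_{\pi_a}$ come from proper subvarieties of $T_a$ which map into strictly smaller strata, and composing with $\iota_{a*}$ lands these in $\widetilde{\P}^{\leq 0}_f\text{gr}^{j}K_{X_a}(X)_\mathbb{Q}$, so the map descends to the quotient in \eqref{topdegree}.

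The main obstacle is establishing that \eqref{topdegree} is actually an isomorphism, not merely a well-defined map, and thereafter obtaining the full direct-sum decomposition. In cohomology this is the heart of the de Cataldo--Migliorini theorem and rests on the non-degeneracy of the intersection form on the top-dimensional components of fibres of $f$ over $S_a^\circ$. To transfer this non-degeneracy to $K$-theory, I would first apply the cycle map $\mathfrak{c}$ of Proposition \ref{cyclemap} to reduce surjectivity to the known cohomological decomposition. The hard direction is then constructing an honest $K$-theoretic splitting: by Proposition \ref{lb}, the quasi-smooth condition forces the contributing correspondences for $\P^{\leq 0}_f\text{gr}^\cdot K_{X_a}(X)$ to live in $\text{gr}_{\dim X_a}$, so one could try to build the inverse by intersecting these top-dimensional correspondences along $\Gamma_a$ and using the semismallness bound $2\dim X_a - \dim S_a \leq \dim X\times_S X = \dim X$ (from the proof of Theorem \ref{sd}) to keep all dimensions in the admissible range.

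The expected main difficulty, and the reason this is stated as a conjecture, is that the inverse construction sketched above genuinely requires a $K$-theoretic analog of the non-degeneracy of the refined intersection form on fibre components, which in cohomology is a Hodge-theoretic input. The author's Theorem \ref{localrat} handles the case of $K_0$ under the extra assumption that each $\pi_a$ admits a small resolution, where the techniques of \cite{dCM} can be invoked directly (and the small-resolution computation from Subsection \ref{computations} replaces intersection $K$-theory by ordinary $K_0$). Beyond this case, and in particular for higher $K_\cdot$, one seems to need a new ingredient, either a spectral sequence degeneration analogous to the Leray argument for perverse sheaves, or an explicit geometric construction of orthogonal projectors built from quasi-smooth correspondences of the type developed in Subsection \ref{towa}.
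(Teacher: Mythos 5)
The statement you were asked about is Conjecture~\ref{dss2}, which the paper does not prove: it is stated as a conjecture precisely because the author has no general argument. There is therefore no ``paper's own proof'' to compare against, and your text is, appropriately, a structural analysis rather than a proof. Your analysis is consistent with the paper. You correctly use Theorem~\ref{sd} (for semismall $f$, $s(f)=0$ gives $\P^{\leq 0}_f\text{gr}^\cdot K_0(X)=\text{gr}^\cdot K_0(X)$ and $\P^{\leq -1}_f\text{gr}^\cdot K_\cdot(X)=0$), you correctly observe that functoriality (Theorem~\ref{functorP}) and the codimension bookkeeping in \eqref{ranges} produce the well-defined map $\iota_{a*}\Phi_{\Gamma_a}$ on the quotients in \eqref{topdegree}, and you correctly locate the content of the conjecture in the claim that this map is an isomorphism.

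Your identification of the proved special case also matches the paper: Theorem~\ref{ss} establishes the conjecture for $K_0$ under the hypothesis that the $\pi_a:T_a\to S_a$ are small maps (note the hypothesis is that $\pi_a$ itself is small, so $T_a$ is a small resolution of $S_a$, not that $\pi_a$ ``admits'' a small resolution as you phrase it). The paper's proof of Theorem~\ref{ss} constructs the factorization through the perverse filtration exactly as you sketch (via Proposition~\ref{localrat}, Theorem~\ref{functorP}, and Theorem~\ref{sd}) and then invokes de Cataldo--Migliorini \cite[Theorem 4.0.4]{dCM} as the source of the isomorphism at the level of $K_0$; the Hodge-theoretic non-degeneracy of the refined intersection form is concealed inside that citation, which is precisely the ingredient you flag as the obstruction to a general argument or to extending to higher $K$-groups. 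In short: you have not proved the conjecture (nor could you be expected to), but your account of why the map exists, why the special case succeeds, and what is missing in general faithfully reflects what the paper actually establishes.
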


In relation to \eqref{topdegree}, we propose the following:

\begin{conj}
Let $f:X\to S$ be a surjective map of relative dimension $d$ with $X$ is smooth. Let $U$ be a smooth open subset of $S$ such that $f^{-1}(U)\to U$ is smooth. For $y\in U$, $\pi_1(U, y)$ acts on the irreducible components of $f^{-1}(y)$ of top dimension and let $L$ be the associated local system. Then $L$ is an integer finite local system.
There is an isomorphism
\[\P^{\leq -d}_f\text{gr}^j K_\cdot(X)_\mathbb{Q}\big/\widetilde{\P}^{\leq -d}_f\text{gr}^j K_\cdot(X)_\mathbb{Q}\cong \text{gr}^j \I K_\cdot(S,L)_\mathbb{Q}.\]
\end{conj}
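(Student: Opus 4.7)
The plan is to attach to $L$ a single auxiliary smooth variety $h:T\to S$, use the incidence correspondence between $X$ and $T$ to produce candidate maps between the two sides, and show they descend to mutually inverse isomorphisms via an induction on $\dim S$.

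First I would construct $h$. After shrinking $U$, the assignment sending $y\in U$ to the set of top-dimensional irreducible components of $f^{-1}(y)$ is represented by an étale cover $\alpha:V\to U$, and by construction $L\cong\alpha_*\mathbb{Z}_V$; this already shows $L$ is an integer finite local system. Let $T$ be a resolution of singularities of the normalization of $S$ in the function field of $V$, so that $h:T\to S$ is proper and generically finite, $T$ is smooth, and $h^{-1}(U)\cong V$ after shrinking further if needed. Build the incidence scheme $\Gamma^\circ\subset X\times_S V$ whose fibre over $v\in V$ is the top-dimensional component of $f^{-1}(\alpha(v))$ indexed by $v$; since $V\to U$ is étale and these components are smooth of dimension $d$, $\Gamma^\circ$ is smooth of dimension $\dim X$. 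Compactify and pass to a quasi-smooth model $\Gamma\subset X\times T$ via a factorization through a smooth variety as in Subsection \ref{qss}, producing $[\Gamma]\in\text{gr}_{\dim X}K^q_{T\times_S X}(T\times X)$ and a transpose $[\Gamma^t]$.

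With $\dim T=\dim S$ and $\dim X-\dim S=d$, the bound \eqref{ranges} at $i=-d,\ s=0$ is saturated, so $\Phi_\Gamma$ maps $\P^{\leq 0}_h\text{gr}^\cdot K_\cdot(T)$ into $\P^{\leq -d}_f\text{gr}^\cdot K_\cdot(X)$; since the support property is preserved, $\Phi_\Gamma(\widetilde{\P}^{\leq 0}_h)\subset\widetilde{\P}^{\leq -d}_f$, and $\Phi_\Gamma$ thus descends to a well-defined map from $\text{gr}^\cdot\I K(S,L)_\mathbb{Q}$ to the left-hand side of the conjecture. For surjectivity, Proposition \ref{lb} implies that any quasi-smooth correspondence contributing to $\P^{\leq -d}_f$ must come from a smooth $T'\to S$ dominant over $S$ with generic fibre of dimension $d$ contained in the union of top-dimensional components of the generic fibre of $f$; any such $\Gamma'$ therefore factors generically through $\Gamma$, so modulo $\widetilde{\P}^{\leq -d}_f$ its image lies in $\Phi_\Gamma(\P^{\leq 0}_h)$.

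For the inverse and for injectivity I would use $\Phi_{\Gamma^t}$, but the crucial asymmetry of the statement now enters: the right-hand side mods out by $\widetilde{\P}^{\leq 0}_h\cap\ker h_*$, not by $\widetilde{\P}^{\leq 0}_h$ alone, so one needs $\Phi_{\Gamma^t}\big(\widetilde{\P}^{\leq -d}_f\big)\subset\ker h_*$. This is the main obstacle, and I would address it by induction on $\dim S$: on each stratum $V\subsetneqq S$, restrict via Theorem \ref{functorP} to resolved preimages $\widetilde{X^a_V}\to V$ and apply the inductive hypothesis to identify the contributions of $\widetilde{\P}^{\leq -d}_f$ stratum by stratum, so that the $K$-theoretic analogue of \eqref{comp1} forces the $h_*$-image from lower strata to vanish. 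The compositions $\Phi_\Gamma\Phi_{\Gamma^t}$ and $\Phi_{\Gamma^t}\Phi_\Gamma$ are then controlled by the excess intersection \eqref{eul}: over $V\times V$ the convolution kernel $\Gamma^t\circ\Gamma$ equals the class of the diagonal weighted by the Euler class of the appropriate normal bundle, plus cycles supported over proper strata that lie in $\widetilde{\P}$; confirming that these pin down mutually inverse isomorphisms will require a $K$-theoretic adaptation of the bilinear form non-degeneracy argument of de Cataldo--Migliorini \cite[Section 4]{dCM}, applied using the intersection pairing of Proposition \ref{propIK}.
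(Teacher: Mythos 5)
The statement you set out to prove is presented in the paper as a Conjecture, not a theorem: the paper offers no proof of it and only remarks that ``the analogous statement in cohomology follows from the decomposition theorem.'' So there is no paper proof to compare against. The real question is whether your proposal closes the gap and produces an unconditional proof, and it does not.

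Your setup is sensible: constructing $h:T\to S$ from the \'etale cover classifying top-dimensional components of the fibres of $f$, building the incidence correspondence $\Gamma$, and noting that the bound \eqref{ranges} at $i=-d$, $s=0$ is tight are all reasonable first moves. But the proposal stalls at the points that carry the real content. First, you assert that $\Phi_\Gamma$ carries $\P^{\leq 0}_h\text{gr}^\cdot K_\cdot(T)$ into $\P^{\leq -d}_f\text{gr}^\cdot K_\cdot(X)$, but verifying this requires controlling compositions $\Gamma\circ\Theta$ for arbitrary quasi-smooth correspondences $\Theta$ from some $T'\to S$ to $T$, checking that $\Gamma\circ\Theta$ remains quasi-smooth of the correct dimension and that the shifted bound \eqref{ranges} is respected uniformly; none of this is done, and the dimension-count is not automatic when the intermediate fibre product is not transverse. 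Second, and more seriously, mutual invertibility of $\Phi_\Gamma$ and $\Phi_{\Gamma^t}$ is deferred to ``a $K$-theoretic adaptation of the bilinear form non-degeneracy argument of de Cataldo--Migliorini.'' That argument is a cohomological consequence of the decomposition theorem, and the paper only succeeds in adapting it (in Theorem \ref{ss}) under the much stronger hypotheses that $f$ is semismall and admits small strata resolutions; the Conjecture concerns a general surjection of relative dimension $d$, well outside the semismall case. For such $f$ the self-convolution $\Gamma^t\circ\Gamma$ contains non-diagonal terms whose vanishing modulo $\widetilde{\P}$ is exactly the $K$-theoretic non-degeneracy one would need, and you have not supplied any mechanism to establish it. You also correctly identify that one needs $\Phi_{\Gamma^t}\big(\widetilde{\P}^{\leq -d}_f\big)\subset\ker h_*$, but the proposed resolution ``by induction on $\dim S$ using a $K$-theoretic analogue of \eqref{comp1}'' is precisely a restatement of the conjectured isomorphism in lower dimension together with an unproven sheaf-theoretic identity for $K$-theory; it is circular as written. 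In short, the proposal outlines a plausible strategy but leaves the central input --- a $K$-theoretic substitute for the decomposition theorem --- as an unproven black box, which is exactly why the paper states the result as a conjecture rather than a theorem.
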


The analogous statement in cohomology follows from the decomposition theorem.
In this section, we prove the following:
\begin{thm}\label{ss}
We use the notation from Conjecture \ref{dss2}. Assume that the maps $\pi_a: T_a\to S_a$ are small. Then Conjecture \ref{dss2} holds for $K_0$.
\end{thm}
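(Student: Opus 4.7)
The strategy is to reduce the theorem, via the drastic simplifications that semismallness of $f$ and smallness of each $\pi_a$ impose on the perverse filtrations, to the Chow-theoretic decomposition theorem of de Cataldo--Migliorini \cite[Section 4]{dCM}. First, since $f$ is semismall, Theorem \ref{sd} forces $\P^{\leq 0}_f \text{gr}^\cdot K_0(X) = \text{gr}^\cdot K_0(X)$, and since each $\pi_a$ is small, the small-resolution computation in Subsection \ref{computations} gives $\P^{\leq 0}_{\pi_a} \text{gr}^\cdot K_0(T_a) = \text{gr}^\cdot K_0(T_a)$ together with $\widetilde{\P}^{\leq 0}_{\pi_a} \text{gr}^\cdot K_0(T_a) = 0$. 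Therefore
\[\text{gr}^{j-c_a} \I K_0(S_a, L_a)_{\mathbb{Q}} \cong \text{gr}^{j-c_a} K_0(T_a)_{\mathbb{Q}} \cong \text{CH}^{j-c_a}(T_a)_{\mathbb{Q}},\]
and the desired decomposition becomes
\[\sum_{a \in A} \iota_{a*} \Phi_{\Gamma_a} : \bigoplus_{a \in A} \text{CH}^{j-c_a}(T_a)_{\mathbb{Q}} \xrightarrow{\sim} \text{CH}^j(X)_{\mathbb{Q}},\]
which is exactly the Chow-motive decomposition in the situation where every local system $L_a$ is realized by a small map.

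Next I would identify the correspondences $\Gamma_a \subset X_a \times_{S_a} T_a$ from Conjecture \ref{dss2} with those used in \cite{dCM} to build mutually orthogonal projectors $p_a \in \text{CH}^{\dim X}(X \times X)_{\mathbb{Q}}$ satisfying $\sum_a p_a = [\Delta_X]$. The only substantive input is the non-degeneracy of the refined intersection form on top-dimensional cycles of each $X_a$, proved in \cite[Section 2]{dCM}: this is what makes the self-composition $\Phi_{\Gamma_a^t} \circ \iota_a^* \circ \iota_{a*} \circ \Phi_{\Gamma_a}$ invertible modulo contributions from strata deeper than $S_a$. For the refined statement \eqref{topdegree}, I would induct on the partial order $b < a$ on $A$ given by $S_b \subsetneq \overline{S_a}$: at stratum $a$, the subspace $\widetilde{\P}^{\leq 0}_f \text{gr}^j K_{X_a}(X)$ is by definition generated by pushforwards from strata $b < a$, which by induction lie in the image of $\bigoplus_{b<a} \iota_{b*} \Phi_{\Gamma_b}$; dividing out, the non-degeneracy of the refined intersection form identifies the quotient with $\text{CH}^{j-c_a}(T_a)_{\mathbb{Q}}$ via $\iota_{a*} \Phi_{\Gamma_a}$.

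The main obstacle is the bookkeeping of filtrations. One must verify that the subspace $\widetilde{\P}^{\leq 0}_f \text{gr}^\cdot K_{X_a}(X)$, defined through all resolutions $\widetilde{X^a_V}$ and the vanishing conditions on $\tau^{a*}_V$, coincides with the ``deeper strata'' filtration implicitly used in \cite{dCM}; the worry is that the $K$-theoretic perverse filtrations allow correspondences from arbitrary smooth $T \to S$, not only those arising from the specific small maps $\pi_a$, so one has to rule out extra contributions at perverse degree zero coming from other $T$. Once this identification of filtrations is in place, the proof amounts to translating each step of \cite[Section 4]{dCM} from Chow groups to $\text{gr}^\cdot K_0$ through the isomorphism $\text{gr}^\cdot K_0(-)_{\mathbb{Q}} \cong \text{CH}^\cdot(-)_{\mathbb{Q}}$ on smooth varieties.
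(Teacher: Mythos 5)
Your core strategy matches the paper's: reduce to the de Cataldo--Migliorini theorem on Chow groups by exploiting semismallness of $f$ (via Theorem \ref{sd}, so $\P^{\leq 0}_f\text{gr}^\cdot K_0(X)=\text{gr}^\cdot K_0(X)$) and smallness of each $\pi_a$ (so $\text{gr}^\cdot \I K_0(S_a,L_a)_\mathbb{Q}\cong\text{gr}^\cdot K_0(T_a)_\mathbb{Q}$). Where you diverge is in how the filtration bookkeeping is handled. You propose to rebuild the orthogonal projectors of \cite{dCM} and induct on the stratum poset using non-degeneracy of the refined intersection form, and you correctly flag the resulting worry: that $\widetilde{\P}^{\leq 0}_f\text{gr}^\cdot K_{X_a}(X)$ might receive contributions from correspondences unrelated to the $\pi_a$, which your induction would not control. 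The paper sidesteps this concern entirely by a sandwiching argument. Using Proposition \ref{localrat} (which shows that the closure-of-graph correspondence $\Gamma_a$ computes $\Phi_{\Gamma_a}$ after pushing down from a resolution of $X_a$) together with Theorems \ref{functorP}, \ref{functorialityP}, and \ref{sd}, it shows that $\Phi_{\Gamma_a}$ factors
\begin{equation*}
\text{gr}^{j-c_a}K_0(T_a)\twoheadrightarrow \iota_{a*}\left(\P^{\leq 0}_f\text{gr}^jK_{X_a,0}(X)\big/\widetilde{\P}^{\leq 0}_f\text{gr}^jK_{X_a,0}(X)\right)\to\text{gr}^jK_0(X),
\end{equation*}
with the first arrow surjective onto the displayed quotient. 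Since \cite[Theorem 4.0.4]{dCM} already asserts the total composite $\bigoplus_a\Phi_{\Gamma_a}$ is a rational isomorphism, a two-out-of-three argument forces every arrow in the factorization to be an isomorphism; in particular the refined statement \eqref{topdegree} and the absence of extraneous perverse-degree-zero contributions both come for free. So your route would require re-proving dCM's orthogonality internally and plugging the gap you identify, whereas the paper uses dCM as a black box in a way that makes the gap disappear. You should also note that the paper needs Proposition \ref{localrat} explicitly to pass from the singular $X_a$ to a resolution $Y_a$ so that the functoriality theorems for the $\P$-filtration (which are stated for smooth sources) apply; your sketch does not address this.
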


We first note a preliminary result.

\begin{prop}\label{localrat}
Consider varieties $S$ and $X$, and a smooth variety $Y$ with surjective maps $f:X\to S$ of relative dimension $d$ and
$g:Y\to S$ of relative dimension $0$.
Assume there exists an open subset $U$ of $S$ and a map $h$ such that:
\begin{equation*}
    \begin{tikzcd}
    g^{-1}(U)\arrow[dr,"g"']& & f^{-1}(U)\arrow[dl,"f"]\arrow[ll,"h"']\\
     &U&
    \end{tikzcd}
\end{equation*}
Denote also by $h$ the rational map $h:X\dashrightarrow Y$. Consider a resolution of singularities $\pi:X'\to X$ such that there exists a regular map $h'$ as follows: 
\begin{equation*}
    \begin{tikzcd}
     &X'\arrow[d,"\pi"]\arrow[ddl, bend right, "h'"']\\
     &X\arrow[d,"f"]\arrow[dl,"h"']\\
     Y\arrow[r,"g"]&S.
    \end{tikzcd}
\end{equation*}
Let $\Gamma$ be the closure of the graph of $h$ in $Y\times X$ and let $\Gamma'$ be the graph of $h'$ in $Y\times X'$. Then the following diagram commutes:
\begin{equation*}
    \begin{tikzcd}
    \text{gr}^\cdot K_\cdot(Y)\arrow[r,"\Phi_{\Gamma'}"]\arrow[dr,"\Phi_\Gamma"']& \text{gr}^\cdot K_\cdot(X')\arrow[d,"\pi_*"]\\
     & \text{gr}^\cdot G_\cdot(X).
    \end{tikzcd}
\end{equation*}
\end{prop}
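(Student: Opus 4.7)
\textit{Proof plan.} The plan is to reduce the equality $\pi_\ast \Phi_{\Gamma'} = \Phi_\Gamma$ to a cycle-level identification of $\widetilde{\pi}_\ast [\mathcal{O}_{\Gamma'}]$ with $[\mathcal{O}_\Gamma]$ in $G_0(Y\times X)$, up to a piece of the dimension filtration that vanishes in the relevant associated graded.

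\textit{Step 1 (reduction via projection formula).} Write $\widetilde{\pi} := \mathrm{id}_Y\times \pi : Y\times X'\to Y\times X$, and denote by $p_X,\,p_{X'},\,q_Y,\,q'_Y$ the obvious projections. The square
\begin{equation*}
\begin{tikzcd}
Y\times X'\arrow[r,"\widetilde{\pi}"]\arrow[d,"p_{X'}"']& Y\times X\arrow[d,"p_X"]\\
X'\arrow[r,"\pi"]& X
\end{tikzcd}
\end{equation*}
is cartesian with $\widetilde{\pi}$ proper, so flat base change and the projection formula give, for $F\in \text{gr}^\cdot K_\cdot(Y)$:
\[\pi_\ast \Phi_{\Gamma'}(F) = \pi_\ast p_{X'\ast}\bigl([\mathcal{O}_{\Gamma'}]\otimes q'^{\ast}_Y F\bigr) = p_{X\ast}\bigl(\widetilde{\pi}_\ast[\mathcal{O}_{\Gamma'}]\otimes q^\ast_Y F\bigr).\]
Comparing with $\Phi_\Gamma(F) = p_{X\ast}([\mathcal{O}_\Gamma]\otimes q^\ast_Y F)$, it suffices to analyze $\widetilde{\pi}_\ast[\mathcal{O}_{\Gamma'}] - [\mathcal{O}_\Gamma]$.

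\textit{Step 2 (birationality of the induced map on graphs).} Next I would observe that $\widetilde{\pi}$ restricts to a proper map $r:\Gamma'\to\Gamma$, and that $r$ is birational. Indeed, let $V:=f^{-1}(U)\subset X$. Over $V$, $\pi$ restricts to an isomorphism $\pi^{-1}(V)\xrightarrow{\sim}V$ and $h'=h\circ\pi$, so $\widetilde{\pi}$ identifies $\Gamma'\cap(Y\times \pi^{-1}(V))$ with the graph of $h|_V$, which is a dense open in $\Gamma$ by definition of $\Gamma$ as the closure of this graph. In particular, $\widetilde{\pi}(\Gamma') = \Gamma$ and $r$ is proper and birational. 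Consequently, the natural map $\mathcal{O}_\Gamma\to Rr_\ast\mathcal{O}_{\Gamma'}$ is an isomorphism over the open $\Gamma\cap(Y\times V)\subset\Gamma$, so its cone is supported on the proper closed subset $\Gamma\setminus(\Gamma\cap(Y\times V))$, which has dimension strictly less than $\dim\Gamma=\dim X$.

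\textit{Step 3 (filtration argument and conclusion).} Pushing forward along the closed immersion $\Gamma\hookrightarrow Y\times X$ identifies $\widetilde{\pi}_\ast[\mathcal{O}_{\Gamma'}]-[\mathcal{O}_\Gamma]$ with a class in $F^{\dim}_{\dim X-1}G_0(Y\times X)$. As explained in Subsection \ref{Kper}, the operation $\Phi_\Xi(F) = p_{X\ast}(\Xi\otimes q^\ast_Y F)$ is compatible with the dimension filtration with the stated shift: a correspondence of dimension $\leq \dim X-s$ produces a map landing in codimension $\geq s+j$ on the image when $F\in\text{gr}^j K_\cdot(Y)$. Applied with $s\geq 1$, the contribution of $\widetilde{\pi}_\ast[\mathcal{O}_{\Gamma'}]-[\mathcal{O}_\Gamma]$ lies in a strictly smaller piece of the filtration than the one computing $\text{gr}^j G_\cdot(X)=\text{gr}_{\dim X-j}G_\cdot(X)$, and therefore vanishes there. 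Combining with Step 1 yields $\pi_\ast\Phi_{\Gamma'}(F)=\Phi_\Gamma(F)$ in $\text{gr}^\cdot G_\cdot(X)$.

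\textit{Main obstacle.} The delicate point is not Step 2, which is a transparent birationality argument, but Step 3: one must argue that the residual class, which is genuinely nonzero in $G_0(Y\times X)$ because of exceptional contributions of $r$, nevertheless maps to zero under the correspondence operation into the graded piece we care about on $X$. This reduces to the functoriality of $\Phi_{(-)}$ with respect to the dimension filtration, an input already built into the construction in Subsection \ref{Kper} via proper pushforward along $p_X$ and pullback from the smooth variety $Y$ along $q_Y$.
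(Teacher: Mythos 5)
Your proof is correct and follows essentially the same route as the paper: reduce via the projection formula along $\widetilde{\pi}=\mathrm{id}_Y\times\pi$ to comparing $\widetilde{\pi}_*[\mathcal{O}_{\Gamma'}]$ with $[\mathcal{O}_\Gamma]$, then use the birationality of $\Gamma'\to\Gamma$ to conclude equality in $\mathrm{gr}_{\dim X}G_\cdot(Y\times X)$. The only difference is that you spell out in Step~3 why the lower-dimensional residual class is killed by $\Phi$ on associated graded, a point the paper leaves implicit in the phrase ``it suffices to show.''
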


\begin{proof}
Consider the maps:
\begin{equation*}
    \begin{tikzcd}
     Y\times X' \arrow[d,"\pi'"']\arrow[r,"p'"]&X'\arrow[d,"\pi"]\\
     Y\times X\arrow[r,"p"]\arrow[d,"q"']&X\arrow[d,"f"]\\
     Y\arrow[r,"g"]&S.
    \end{tikzcd}
\end{equation*}
Let $x\in \text{gr}^\cdot K_\cdot(Y)$. We want to show that:
\[\pi_*p'_*(\Gamma'\otimes \pi'^*q^*(x))=p_*(\Gamma\otimes q^*(x)).\]
It suffices to show that
\begin{equation}\label{Gamma}
\pi'_*\left[\Gamma'\right]=\left[\Gamma\right]\text{ in }\text{gr}_\cdot G(X\times Y).
\end{equation}
Both $\Gamma$ and $\Gamma'$ have dimension equal to the dimension of $X$. The map $\pi':\Gamma'\to\Gamma$ is birational, so the cone of 
\[\mathcal{O}_\Gamma\to\pi'_*\mathcal{O}_{\Gamma'}\] is supported on a proper set of $\Gamma$, which implies the equality in \eqref{Gamma}.
\end{proof}

\begin{proof}[Proof of Theorem \ref{ss}]

Let $a\in A$ and consider the diagram:
\begin{equation*}
    \begin{tikzcd}
     & Y_a\arrow[d,"\tau_a"]\arrow[ddl,"h_a"']&\\
     &X_a\arrow[d,"f_a"]\arrow[r, hook]&X\arrow[d,"f"]\\
     T_a\arrow[r,"\pi_a"]&S_a\arrow[r,hook]&S,
    \end{tikzcd}
\end{equation*}
where the map $\tau_a$ is a resolution of singularities. Let $\Gamma_a$ be the closure of the natural rational map $X_a\dashrightarrow T_a$. 
By Proposition \ref{localrat} and Theorem \ref{functorP}, the map $\Phi_{\Gamma_a}$ factors as:
\begin{equation*}
    \Phi_{\Gamma_a}: \text{gr}^j K_\cdot(T_a)\xrightarrow{h_a^*}\text{gr}^j K_\cdot(Y_a)\xrightarrow{\tau_{a*}}\text{gr}^j G_\cdot(X_a)\to \text{gr}^{j+c_a} K_{X_a}(X).
    \end{equation*}
By Theorems \ref{functorialityP} and \ref{sd}, the map $\Phi_{\Gamma_a}$ factors as:
\begin{multline*}
\Phi_{\Gamma_a}: \text{gr}^j K_0(T_a)=\P^{\leq 0}_{h_a}\text{gr}^j K_0(T_a)\xrightarrow{h_a^*}
\P^{\leq -d_a}_{f_a\tau_a}\text{gr}^j K_0(Y_a)\xrightarrow{\tau_{a*}}\P^{\leq -d_a}_{f_a}\text{gr}^j K_0(X_a)\to\\
 \P^{\leq 0}\text{gr}^{j+c_a} K_{X_a,0}(X)\to 
\P^{\leq 0}_f\text{gr}^{j+c_a} K_0(X)=\text{gr}^{j+c_a} K_0(X).
\end{multline*}
We thus obtain a map of vector spaces
\begin{multline}\label{facto}
    \bigoplus_{a\in A}\Phi_{\Gamma_a}: \bigoplus_{a\in A}\text{gr}^{j-c_a} K_0(T_a)\to \bigoplus_{a\in A}\iota_{a*}\left(\P^{\leq 0}_{f}\text{gr}^j K_{X_a,0}(X)\big/\widetilde{\P}^{\leq 0}_{f}\text{gr}^j K_{X_a,0}(X)\right)\\\to \text{gr}^j K_0(X).
\end{multline}
A theorem of de Cataldo--Migliorini \cite[Theorem 4.0.4]{dCM} says that there is an isomorphism:
\[\bigoplus_{a\in A}\Phi_{\Gamma_a}: 
\bigoplus_{a\in A}\text{gr}^{j-c_a} K_0(T_a)_\mathbb{Q}\xrightarrow{\sim} \text{gr}^{j} K_0(X)_\mathbb{Q}.\]
Combining with \eqref{facto}, we see that in this case
\[\Phi_{\Gamma_a}:\text{gr}^{j-c_a} K_0(T_a)_\mathbb{Q}\xrightarrow{\sim} \iota_{a*}\left(\P^{\leq 0}_{f}\text{gr}^j K_{X_a,0}(X)_\mathbb{Q}\big/\widetilde{\P}^{\leq 0}_{f}\text{gr}^j K_{X_a,0}(X)_\mathbb{Q}\right).\]
This implies the claim of Theorem \ref{ss}.

\end{proof}

\end{document}